\documentclass[11pt,reqno]{amsart}

\usepackage[utf8]{inputenc}
\usepackage{microtype}


\usepackage[T1]{fontenc}
\usepackage{amssymb}
\usepackage{amsthm}
\usepackage{amscd}
\usepackage{amsfonts}
\usepackage{stmaryrd}
\usepackage[all]{xy}

\usepackage{caption}
\usepackage{subcaption}
\usepackage{euler}

\usepackage{extarrows}

\usepackage[colorlinks, linktocpage, citecolor = purple, linkcolor = blue]{hyperref}
\usepackage{color}

\usepackage{tikz}									
\usetikzlibrary{matrix}
\usetikzlibrary{patterns}
\usetikzlibrary{matrix}
\usetikzlibrary{positioning}
\usetikzlibrary{decorations.pathmorphing}
\usetikzlibrary{cd}

\usepackage{fullpage}

\usepackage[shortlabels]{enumitem}

\linespread{1.1}

\usepackage{amsmath,xypic}								
\usepackage{amssymb}
\usepackage{amsthm}
\usepackage{amscd}
\usepackage{amsfonts}

\usepackage{caption}
\usepackage{subcaption}

\usepackage{tikz}									
\usetikzlibrary{matrix}
\usetikzlibrary{patterns}
\usetikzlibrary{matrix}
\usetikzlibrary{positioning}
\usetikzlibrary{decorations.pathmorphing}
\usetikzlibrary{cd}

\usepackage[shortlabels]{enumitem}


\newtheorem{theorem}{Theorem}[section]
\newtheorem{lemma}[theorem]{Lemma}

\newtheorem{prop}[theorem]{Proposition}
 
\newtheorem{cor}[theorem]{Corollary} 
 
\usepackage{comment}

\theoremstyle{definition}

\newtheorem{mydef}[theorem]{Definition}

\newtheorem{exa}[theorem]{Example}
\newtheorem{algo}[theorem]{Algorithm}

\newtheorem{rem}[theorem]{Remark}


\author[M. Brandt]{Madeline Brandt}
\address{Department of Mathematics, Brown University,  Providence, RI 02912}
\email{\href{mailto:madeline_brandt@brown.edu}{madeline\_brandt@brown.edu}}

\author{Paul Alexander Helminck}
  \address{Department of Mathematics, Swansea University, Swansea}
 \email{\href{mailto:paulhelminck@gmail.com}{paulhelminck@gmail.com}}

   \title{Tropical superelliptic curves}

\begin{document}

 \begin{abstract}
  We present an algorithm for computing the Berkovich skeleton of a superelliptic curve $y^n=f(x)$ over a valued field. After defining superelliptic weighted metric graphs, we show that each one is realizable by an algebraic superelliptic curve when $n$ is prime. 
Lastly, we study the locus of superelliptic weighted metric graphs inside the moduli space of tropical curves of genus $g$.   
 \end{abstract}

\maketitle

\section{Introduction}
In this paper we study tropical superelliptic curves and tropicalizations of superelliptic covers.
Let $K$ be a field of characteristic 0 that is complete with respect to a nontrivial non-Archimedean discrete valuation. Let $R$ be the valuation ring of $K$ with maximal ideal $m$, let $k := R/m$ be the residue field, and let $\pi$ be a uniformizer for $K$. 
A \emph{superelliptic curve} over $K$ is a curve $X$ which admits a Galois covering $\phi:X\rightarrow{\mathbb{P}^1}$ such that the Galois group is cyclic of order $n$. We assume the characteristic of the residue field $k$ is relatively prime to $n$.
Assuming $K$ contains $n$ distinct primitive $n$-th roots of unity, Kummer theory \cite[Proposition 3.2]{neu} tells us the covering comes as
$
y^n=f(x),
$
where $f(x)$ is some rational function in $K(x)$. This normal form allows us to directly relate ramification data of the corresponding covering $(x,y)\mapsto{x}$ to the rational function $f$.
We can in fact assume $f(x)$ is a polynomial by the following transformation. For $f(x)$ of the form $f(x)=g(x)/h(x)$, we multiply both sides of $y^n=f(x)$ by $h(x)^{n}$ and make a change of coordinates $\tilde{y}=h(x)\cdot{y}$ to obtain the integral equation $(\tilde{y})^n=g(x)h(x)^{n-1}$.

Finding the Berkovich skeleton of a curve $X$ over $K$ given defining equations of $X$ is a difficult problem. There are some theoretical procedures for carrying this out, and these usually involve finding the finite extension of $K$ needed for semistable reduction and calculating a regular model by normalization and blowing up singular points, see \cite[Theorem 3.44, Chapter 9]{liu2}. Then, the dual graph of the special fiber is the Berkovich skeleton.
Instead of this approach, we study this problem using divisors on trees to directly construct the Berkovich skeleton.
The problem of computing the Berkovich skeleton for genus 2 curves was first studied in \cite{liu} in terms of semistable models. This was done systematically by studying the ramification data in \cite{RSS} and using Igusa invariants in \cite{igusa}. 
In the case of hyperelliptic curves, this problem was studied in \cite{trophyp} and later solved in \cite{bbc} using ramification data and admissible covers. In \cite{tropabelian}, Helminck presents criteria to reconstruct Berkovich skeleta using Laplacians on metric graphs. In this paper, we apply these techniques to the superelliptic case. 

The paper is structured as follows. In Section \ref{galoiscovers} we define Galois covers of trees and provide a definition for \emph{superelliptic coverings of metric graphs}. In Section \ref{galoiscovers2}, we consider the algebraic side and prove certain inertia groups are preserved under reduction. We furthermore prove that algebraic Galois covers yield Galois covers of metric graphs. 
In Section \ref{tropalgo} we present the algorithm (Theorem \ref{tropalgothm}) for computing the Berkovich skeleton of a superelliptic curve, taking as input the factored equation $y^n = \prod(x-\alpha_i)$. We apply this algorithm to compute the Berkovich skeleta of the genus 3 and genus 4 Shimura-Teichm\"{u}ller curves, and provide an example of a superelliptic curve which tropicalizes to the complete bipartite graph $K_{3,3}$. In Section \ref{realizability} we show the following realizability theorem for tropical superelliptic coverings.
\begin{theorem}[{\bf{Realizability}}]
Let $p$ be a prime number. A covering of metric graphs $\phi_\Sigma:\Sigma \rightarrow T$ is a superelliptic covering of degree $p$ of weighted metric graphs with rational edge lengths
if and only if there exists a superelliptic covering $\phi:X\rightarrow \mathbb{P}^1$ of degree $p$ tropicalizing to it. 
\end{theorem}
\noindent Lastly, in Section \ref{modulispaces} we study the locus of tropical superelliptic curves inside the moduli space $M_g^\text{tr}$, providing computations when possible of the number of maximal dimensional cones in this stacky polyhedral fan.

\section{Galois Covers of Trees}
\label{galoiscovers}
We first give the groundwork for discussing morphisms of metric graphs, and define superelliptic coverings of metric graphs. For the material on harmonic morphisms, we follow \cite{trophyp,admcov}. For a thorough introduction to divisors on graphs and metric graphs, we recommend \cite{baker}. 
Let $H$ be a connected graph and $l: E(H) \rightarrow \mathbb{R}_{>0}\cup\{\infty\}$ be a length function on the edges of $H$. Then a \emph{metric graph} is a connected metric space $\Sigma$ obtained by viewing edges $e$ in $H$ as line segments of length $l(e)$. We require that if $l(e) = \infty$, then one of the vertices of $e$ has degree one in $H$.
The pair $(H,l)$ is called a \emph{model} for $\Sigma$. A \emph{weighted metric graph} is a metric graph $\Sigma$ together with a weight function on its points $w:\Sigma \rightarrow \mathbb{Z}_{\geq 0}$ with finite support. We call edges of infinite length \emph{infinite leaves}, and these only meet the rest of the graph in one endpoint. If we denote by $b_1(\Sigma)$ the first Betti number of $\Sigma$, then we define the \emph{genus} of a weighted metric graph {($\Sigma$,~$w$)} to be 
$
 \sum_{v \in \Sigma} w(v) + b_1(\Sigma).
$
 A genus 0 weighted metric graph is a \emph{tree}. 
 
 \begin{figure}[h]
 \begin{center}
 \includegraphics[height = 2 in]{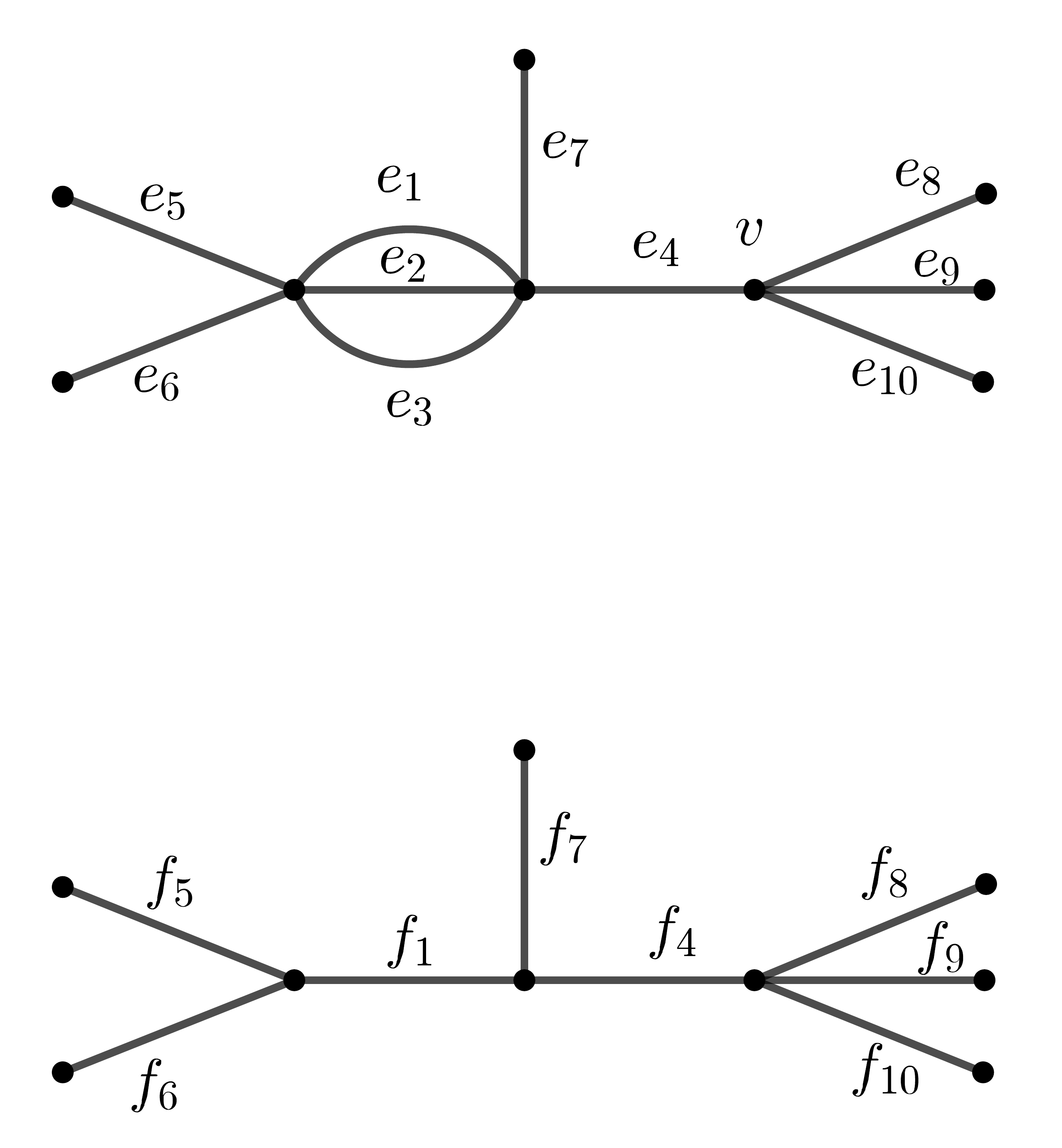}
 \caption{A superelliptic covering of metric graphs.}
 \label{FIG:harmonic_ex}
 \end{center}
 \end{figure}
 
 \begin{exa}
 \label{EX:harmonic_ex_1}
Consider top graph of Figure \ref{FIG:harmonic_ex}. This is a model for a metric graph with $l(e_i) = 1$ for $i \in \{1,2,3,4\}$ and otherwise $l(e_i) = \infty$. We assign weight 0 to every vertex except the vertex labeled $v$, and here we declare $w(v)=2$. The genus of this weighted metric graph is 4.
 \end{exa}
 
If $(H,l)$ and $(H',l')$ are loopless models for metric graphs $\Sigma$ and $\Sigma'$ then a \emph{nondegenerate morphism of loopless models} $\theta: (H,l) \rightarrow (H',l')$ consists of two maps $V(H) \rightarrow V(H')$ and $E(H) \rightarrow E(H')$ and a positive integer $s_e$ called the \emph{scaling factor} for each edge $e \in E(H)$ such that
\begin{enumerate}
\item  If $\theta(e) = e'$, then the vertices of $e$ must map to vertices of $e'$.
\item If $\theta(e) = e'$ where $l(e') \neq \infty$, then $l'(e')/l(e)=s_e$. 
\end{enumerate}
We will sometimes simply refer to this as a morphism of metric graphs $\Sigma \rightarrow \Sigma'$.
Let $v \in V(H)$ and $e' \in E(H')$. Define the \emph{local degree} to be
$$
\label{localdegree}
d_{v,e'} = \sum_{\substack{
e\ni v,\\ \phi(e) = e'
}} s_e.
$$
We say $\theta$ is \emph{harmonic} if for every $v \in V(H)$, and $e',e'' \in E(H')$ with $\phi(v) \in e',e''$ we have $d_{v,e'} = d_{v,e''}$. In this case we will always abbreviate the local degree by $d_v$, since it does not depend on the choice of edge $e'$. The \emph{degree} of a harmonic morphism is defined as 
$$
\sum_{\substack{e \in E(H),\\ \phi(e) = e'}} s_e.
$$
As in \cite{admcov}, we say that $\theta$ satisfies the \emph{local Riemann-Hurwitz condition} at $v$ if  
$$
\label{localrh}
2 - 2w(v) = d_v(2 - 2w'(\theta(v))) - \sum_{e\ni v}\left(s_e - 1\right).
$$

 \begin{exa}
 \label{EX:harmonic_ex_2}
Here, we study both graphs from Figure \ref{FIG:harmonic_ex}. We make a harmonic morphism $\phi$ from the graph on the top to the graph on the bottom by assigning $\phi(e_i) = f_1$, $s_{e_i} = 1$ for $i \in \{1,2,3\}$ and otherwise $\phi(e_i) = f_i$ and $s_{e_i} = 3$. All local degrees are 3 and the degree of $\phi$ is also 3. One can check that the local Riemann-Hurwitz conditions hold everywhere. For example, at $v$ we find
$$
2 - 2 \cdot 2 = 3 \cdot (2 - 2 \cdot 0) - 4 \cdot(3-1).
$$
\end{exa}

\begin{mydef}
\label{galoisdef} If $(H,l)$ is a model for $\Sigma$, then an \emph{automorphism} of $(H,l)$ is a harmonic morphism ${\theta: (H,l) \rightarrow (H,l)}$ of degree 1. Given a finite subgroup $G$ of $\text{Aut}((H,l))$, the \emph{quotient graph} $\Sigma / G$ has a model $(H / G, l_G)$ whose vertices are the $G$-orbits of $V(H)$ and whose edges are the $G$-orbits of edges defined by vertices lying in distinct $G$-orbits. For any edge $e\in{H}$, its $G$-orbit is denoted by $G{e}$. For edges $e \in H$, we set $s_e= |\text{Stab}(e)|$ and $l_G(G{e}) = s_e l(e)$. This is independent of the representative $e$, because the stabilizer of any other edge $g(e)$ in the orbit of $e$ is given by $g(\mathrm{Stab}(e))g^{-1}$, which has the same order as $\mathrm{Stab}(e)$.  Then, the quotient map $(H,l) \rightarrow (H/G, l_G)$ sending a vertex to its orbit and an edge to the edge between the orbits of its vertices is a nondegenerate harmonic morphism as long as the vertices of every edge lie in distinct $G$ orbits. For any finite subgroup $G$ of $\text{Aut}((H,l))$, we call a nondegenerate harmonic morphism $\Sigma\rightarrow{\Sigma/G}$ a \emph{Galois covering} of metric graphs if it satisfies the local Riemann-Hurwitz conditions at every $v$. The group $G$ is the \emph{Galois group} of the covering.
\end{mydef}

\begin{mydef}
A nondegenerate harmonic morphism $\theta: \Sigma \rightarrow T$ is a {\emph{superelliptic}} covering of metric graphs if $\theta$ is a Galois covering of metric graphs with Galois group $G:=\mathbb{Z}/n\mathbb{Z}$ and $T$ is a tree.\end{mydef}

 \begin{exa}
The morphism described in Example \ref{EX:harmonic_ex_2} and depicted in Figure \ref{FIG:harmonic_ex} is a superelliptic covering of metric graphs. The action of $G = \mathbb{Z}/3\mathbb{Z}$ on the graph permutes the edges $e_1,e_2,e_3$.
 \end{exa}

A \emph{divisor} on $\Sigma$ is a map $\Sigma \rightarrow \mathbb{Z}$ with finite support. 
A \emph{rational function} on a metric graph $\Sigma$ is a continuous, piecewise linear function $\psi : \Sigma \rightarrow \mathbb{R}$ which has integer slopes.
Given a rational function $\psi : \Sigma \rightarrow \mathbb{R}$ and a point $P \in \Sigma$, let $\sigma_P(\psi)$ be the sum of the slopes of $\psi$ in all outgoing directions at $P$. The \emph{principal divisor} of $\psi$ is the function
$\Delta(\psi): \Sigma \rightarrow \mathbb{Z}$, where $P \mapsto -\sigma_P(\psi)$.

 \begin{lemma} 
 \label{slopelemma}
Let $T$ be a metric tree (genus 0 metric graph), $v_0$ a point in $T$, and $\Delta(\psi)$ be a principal divisor on $T$. If $e$ is an interval in $T$ with endpoints $\{v_0,v_1\}$, and $e^0 = e \backslash \{v_0,v_1\}$ is deleted from $T$, let $T_e$ denote the  connected component of $T$ not containing $v_0$.
Then the magnitude of the slope of the rational function $\psi$ along $e$ is equal to $\sum_{x \in T_e} \Delta(\psi)(x)$.
 \end{lemma}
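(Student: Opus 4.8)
The plan is to argue by a conservation-of-slope (flow) argument on the tree, localized to the subtree $T_e$. First I would fix an orientation on the edge $e$, say pointing away from $v_0$, so that $T_e$ is exactly the set of points separated from $v_0$ by $e$. The key observation is that the principal divisor $\Delta(\psi)$ records, at each point $P$, the negative of the sum of outgoing slopes of $\psi$; summing $\Delta(\psi)(P) = -\sigma_P(\psi)$ over all $P$ in a region should telescope, because every edge entirely contained in the region contributes its slope once with each sign at its two endpoints and hence cancels. This is the standard discrete divergence theorem on a metric graph.

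The main steps, in order, are as follows. (1) Write $S := \sum_{x \in T_e} \Delta(\psi)(x) = -\sum_{x \in T_e} \sigma_x(\psi)$. (2) Expand $\sigma_x(\psi)$ as the sum of slopes of $\psi$ in all directions out of $x$; since $\psi$ is piecewise linear on $T$ with finitely many breakpoints, only finitely many terms are nonzero, and we may refine the model so that $\psi$ is linear on each edge. (3) Partition the directions appearing in $\sum_{x \in T_e}\sigma_x(\psi)$ into two types: those along edges with both endpoints in $T_e$, and the single direction along $e$ itself at the endpoint of $e$ lying in $T_e$ (there are no edges with exactly one endpoint in $T_e$ other than $e$, since $T_e$ is a connected component of $T \setminus e$ and $T$ is a tree, so no cycles sneak an extra edge across the cut). (4) For each edge $e'$ with both endpoints $u, w \in T_e$, the slope of $\psi$ along $e'$ from $u$ and the slope from $w$ are negatives of each other, so these terms cancel in pairs. (5) Hence $\sum_{x\in T_e}\sigma_x(\psi)$ collapses to just the slope of $\psi$ along $e$ measured from the $T_e$-endpoint of $e$, i.e. $-(\text{slope of }\psi\text{ along }e\text{ in the direction pointing away from }T_e)$. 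Taking absolute values, $|S|$ equals the magnitude of the slope of $\psi$ along $e$, which is the claim.

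I expect the only genuine subtlety — and the step I would write most carefully — is step (3), the claim that $e$ is the \emph{unique} edge of $T$ with exactly one endpoint in $T_e$. This is where the hypothesis that $T$ is a tree is used essentially: in a graph with a cycle, deleting $e$ need not disconnect it, and even when it does, the component "across" the cut could be joined by more than one edge, ruining the telescoping. On a tree, removing any edge disconnects it into exactly two components, and $e$ is the only edge joining them, so the cut is clean. Everything else is bookkeeping: the finiteness of breakpoints (so the sums are finite and a common refinement of the model exists), the sign convention relating outgoing slope to the oriented slope along $e$, and the passage to absolute value. I would also remark that implicitly one should choose a model of $T$ fine enough that $v_0$ is a vertex and $\psi$ is affine on each edge; this costs nothing since $\psi$ has finitely many breakpoints.
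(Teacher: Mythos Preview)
Your proof is correct. The paper takes a slightly different route: rather than summing over all of $T_e$ at once and observing the telescoping cancellation, it argues by induction on the number of edges in $T_e$. The base case is a leaf $x \neq v_0$, where $\Delta(\psi)(x) = \psi_e$ directly; the inductive step isolates $\psi_e$ from the identity $\Delta(\psi)(x) = -\sum_{e' \ni x} \psi_{e'}$ at the $T_e$-endpoint $x$ of $e$ and applies the inductive hypothesis to each of the other edges $e' \neq e$ at $x$. Your divergence-theorem argument performs all of these cancellations in a single pass, which is cleaner and makes the role of the tree hypothesis (namely, that $e$ is the unique edge crossing the cut between $T_e$ and its complement) explicit; the paper's induction is more hands-on but amounts to the same bookkeeping unrolled one vertex at a time.
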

 \begin{proof}
Let $x$ and $y$ be the vertices of $e$ and suppose first $x\not = v_0$ is a leaf. Let $\psi_e$ be the slope of $\psi$ along the edge $e$. Then, $\Delta(\psi)(x) = -\sigma_x(\psi) =\psi_e$. Now, we proceed by induction on the number of edges in the tree $T$. Suppose $y$ is on the path from $x$ to $v_0$. We have
$
\Delta(\psi)(x) = -\sum_{e \ni x} \psi_{e'}.
$
Isolating $\psi_e$, we find
$$
 -\psi_e = \Delta(\psi)(x) + \sum_{e'\neq e, e'\ni x} \psi_{e'}. 
$$
Using the inductive assumption, we can solve for the $\psi_{e'}$ and arrive at the result.
 \end{proof}

\section{Galois coverings of semistable models and inertia groups}\label{galoiscovers2}
In this section, we show \emph{disjointly branched morphisms} of semistable models yield Galois coverings of metric graphs. Furthermore, we discuss inertia groups and prove they are preserved on reduction to the special fiber (Proposition \ref{ramind2}), 
allowing us to relate ramification degrees on a two dimensional scheme (that is, $\mathcal{X}$, a model for $X$) to those on a one dimensional scheme (a component in the special fiber $\mathcal{X}_s$). We use this equality in Section \ref{tropalgo} to reconstruct the Berkovich skeleton for superelliptic covers.

\subsection{Disjointly branched morphisms and inertia groups}\label{disbransect}

Let $\phi:X\rightarrow{Y}$ be a finite morphism of smooth, projective, geometrically connected curves over $K$. We say $\phi$ is \emph{Galois} if the corresponding morphism on function fields $K(Y)\rightarrow{K(X)}$ is Galois. That is, it is normal and separable.
 By a model $\mathcal{Y}$ for a curve $Y$, we mean an integral normal projective scheme $\mathcal{Y}$ of dimension $2$ with a flat morphism $\mathcal{Y}\rightarrow{\text{Spec}(R)}$, and an isomorphism $\mathcal{Y}_{\eta}\rightarrow{Y}$ of the generic fibers. Let $\mathcal{X}$ be a model for $X$ and $\mathcal{Y}$ be a model for $Y$. A \emph{finite morphism of models for} $\phi$ is a finite morphism $\mathcal{X}\rightarrow{\mathcal{Y}}$ such that the base change to $\text{Spec}(K)$ gives $\phi:X\rightarrow{Y}$.
\begin{mydef}\label{disbran}
Let $\phi:X\rightarrow{Y}$ be a finite, Galois morphism of curves over $K$ with Galois group $G$.
Let $\phi_{\mathcal{X}}:\mathcal{X}\rightarrow{\mathcal{Y}}$ be a finite morphism of models for $\phi$. We say $\phi_{\mathcal{X}}$ is \emph{disjointly branched} if the following hold:
\begin{enumerate}
\item The closure of the branch locus in $\mathcal{Y}$ consists of disjoint, smooth sections over $\text{Spec}(R)$.
\item The induced morphism $\mathcal{O}_{\mathcal{Y},\phi(y)}\rightarrow{\mathcal{O}_{\mathcal{X},y}}$ is \'{e}tale for every $y$ a generic point of an irreducible component in the special fiber of $\mathcal{X}$. 
\item The model $\mathcal{Y}$ is strongly semistable, meaning that $\mathcal{Y}$ is semistable and that the components in the special fiber are all smooth. 
\end{enumerate}
A theorem by Liu and Lorenzini \cite[Theorem 2.3]{liu_lorenzini_1999} says if $\phi_\mathcal{X}$ is disjointly branched then $\mathcal{X}$ is actually also \emph{semistable} and \cite[Proposition 4.1.1]{tropabelian} shows $\mathcal{X}$ is also strongly semistable.
\end{mydef}

The \emph{intersection graph} $\Sigma(\mathcal{X})$ of $\mathcal{X}$ has vertices corresponding to the irreducible components of the special fiber and edges corresponding to intersection points of two components in the special fiber. 
We weight each vertex with the genus of the corresponding component. We now study the action of $G$ on $\Sigma(\mathcal{X})$.

\begin{theorem}\cite[Theorem 4.6.1.]{tropabelian}\label{tropabeliantheorem}
Let $\phi_{\mathcal{X}}: \mathcal{X}\rightarrow \mathcal{Y}$ be a disjointly branched morphism of models for a finite Galois morphism $\phi: X \rightarrow Y$ of curves, with Galois group $G$.
There is a natural action of $G$ on the intersection graph $\Sigma(\mathcal{X})$ 
and the induced 
morphism of graphs
$
\Sigma(\mathcal{X})\rightarrow{\Sigma(\mathcal{Y})}
$
coincides with the quotient map $\Sigma(\mathcal{X})\rightarrow{\Sigma(\mathcal{X})/G}$.
\end{theorem}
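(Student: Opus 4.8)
The plan is to build the $G$-action on $\Sigma(\mathcal{C})$ from functoriality of normalization, to identify the quotient scheme $\mathcal{C}/G$ with $\mathcal{D}$, and then to read off the statement about intersection graphs by reducing modulo $\pi$. To begin, note that $\mathcal{C}$ is the normalization of $\mathcal{D}$ in the finite field extension $K(D)\hookrightarrow K(C)$: the morphism $\phi_{\mathcal{C}}$ is finite with generic fibre $\phi$, and a semistable model is normal (its only non-regular points are the surface singularities $xy=\pi^{m}$ at the double points of the special fibre, which are hypersurface, hence $S_2$, and regular in codimension one, hence $R_1$, so normal by Serre's criterion), so a finite dominant morphism $\mathcal{C}\to\mathcal{D}$ out of a normal scheme with function field $K(C)$ must be the integral closure of $\mathcal{D}$ in $K(C)$. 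By functoriality of normalization, the action of $G=\mathrm{Gal}(K(C)/K(D))$ on the generic fibre extends uniquely to an action of $G$ on $\mathcal{C}$ over $\mathcal{D}$ which is the identity on $\mathrm{Spec}(R)$; it therefore preserves the special fibre $\mathcal{C}_s$ and permutes its irreducible components and double points compatibly with incidence, i.e.\ it acts on $\Sigma(\mathcal{C})$ by automorphisms of graphs.

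Next I would identify the quotient scheme. As $\phi_{\mathcal{C}}$ is affine, the preimages of the members of an affine cover of $\mathcal{D}$ form a $G$-stable affine cover of $\mathcal{C}$, and gluing the spectra of the rings of $G$-invariants produces the quotient $\mathcal{C}/G$ together with a finite morphism $\mathcal{C}/G\to\mathcal{D}$. Now $\mathcal{C}/G$ is normal with function field $K(C)^{G}=K(D)$, so the finite birational morphism $\mathcal{C}/G\to\mathcal{D}$ to the normal scheme $\mathcal{D}$ is an isomorphism. Since $\mathrm{char}\,k$ is prime to $n=|G|$, the integer $|G|$ is a unit in $R$, so the averaging operator $e=\tfrac{1}{|G|}\sum_{g\in G}g$ survives reduction and formation of $G$-invariants commutes with $-\otimes_R k$; hence $\mathcal{D}_s\cong(\mathcal{C}/G)_s\cong\mathcal{C}_s/G$. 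Consequently the components of $\mathcal{D}_s$ are exactly the $G$-orbits of components of $\mathcal{C}_s$, the group $G$ acts transitively on the components and on the double points of $\mathcal{C}_s$ lying over a fixed component resp.\ double point of $\mathcal{D}_s$, and the map $\mathcal{C}_s\to\mathcal{D}_s$ is $G$-invariant.

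It remains to match this scheme-theoretic quotient with the combinatorial quotient of Definition \ref{galoisdef}. On vertices, the $G$-orbits of $V(\Sigma(\mathcal{C}))$ biject with the components of $\mathcal{C}_s/G=\mathcal{D}_s$, that is, with $V(\Sigma(\mathcal{D}))$. On edges, a double point $P$ of $\mathcal{C}_s$ whose stabilizer in $G$ interchanges the two components through $P$ has its two adjacent vertices of $\Sigma(\mathcal{C})$ in a single $G$-orbit, and its image in $\mathcal{C}_s/G=\mathcal{D}_s$ is a smooth point rather than a node; such an edge is deleted on both sides, which is exactly the clause in Definition \ref{galoisdef} retaining only edges between vertices in distinct orbits. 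Every remaining double point of $\mathcal{D}_s$ pulls back to a single $G$-orbit of double points of $\mathcal{C}_s$, and the length rule $l(\theta(e))=l(e)\cdot|\mathrm{Stab}(e)|$ is the combinatorial shadow of the tame ramification index of $\phi_{\mathcal{C}}$ along the corresponding thickened node $R[[x,y]]/(xy-\pi^{m})$. This yields an isomorphism $\Sigma(\mathcal{C})/G\xrightarrow{\ \sim\ }\Sigma(\mathcal{D})$ through which the $G$-invariant morphism $\Sigma(\mathcal{C})\to\Sigma(\mathcal{D})$ factors as the quotient map.

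The hard part will be the local analysis at the double points in this last step: one must control the decomposition and inertia groups of the codimension-one points of $\mathcal{C}$ (the generic points of the components of $\mathcal{C}_s$) and of the double points of $\mathcal{C}_s$, and show these groups are unchanged upon reduction to the special fibre — essentially the content of the later Proposition \ref{ramind2}. Here condition (2) of Definition \ref{disbran}, \'{e}taleness at the generic points of $\mathcal{C}_s$, forces the relevant extensions of discrete valuation rings to be unramified, so that the special fibre faithfully records the ramification, while tameness ($\mathrm{char}\,k$ prime to $n$) is what makes both the quotient commute with reduction and the ramification indices combine as prescribed by the metric-graph definition. The remaining ingredients — existence of the quotient as a scheme, normality of semistable models, and transitivity of $G$ on the fibres of $\mathcal{C}_s\to\mathcal{D}_s$ — are routine.
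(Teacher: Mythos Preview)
The paper does not supply its own proof of this theorem: it is stated with the citation \cite[Lemma 4 and Theorem 3.1]{tropabelian} and no \texttt{proof} environment follows; the very next sentence (``For now, this is a statement about graphs\ldots'') moves on. There is therefore nothing in this paper to compare your argument against.

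That said, your outline is a reasonable sketch of how such a result is proved, and it is in the spirit of the cited reference. The ingredients you isolate---the $G$-action on $\mathcal{C}$ via functoriality of normalization, the identification $\mathcal{C}/G\cong\mathcal{D}$ from normality plus finiteness, the use of $\mathrm{char}(k)\nmid |G|$ to commute $G$-invariants with reduction modulo $\pi$, and then the translation to the combinatorics of the dual graph---are exactly the right ones. You are also right to flag the local analysis at the nodes as the substantive step; the present paper in fact postpones precisely that analysis to Proposition~\ref{ramind2} and leans on \cite[Proposition 3.8]{tropabelian} for the behaviour of decomposition groups at intersection points. One minor comment: your case analysis of a stabilizer swapping the two branches at a node is more than is needed here, since under the standing hypotheses both $\mathcal{C}$ and $\mathcal{D}$ are \emph{strongly} semistable (see the remark after Definition~\ref{disbran}), and the paper later imposes the additional assumption $D_x=I_x$ at intersection points, which rules out branch-swapping automorphisms in the decomposition group.
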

For now, this is a statement about graphs; we give the result for metric graphs in Section \ref{algmetgraph}.

\begin{mydef}
\label{DEF:id}
Let $G$ be a finite group acting on a scheme $Z$. 
For any point $z$ of $Z$, we define the decomposition group $D_{z,Z}$ to be $\{\sigma \in G:\sigma(z)=z\}$, the stabilizer of $z$. 
Every element $\sigma\in{D_{z,Z}}$ naturally acts on the structure sheaf $\mathcal{O}_{Z,z}$ and the residue field $k(z)$. We define the \emph{inertia group} $I_{z,Z}$ of $z$ to be the elements of $D_{z,Z}$ reducing to the identity on $k(z)$. In other words, $\sigma \in I_{z,Z}$ if and only if for every $f \in {\mathcal{O}_{Z,z}}$, we have $\sigma f \equiv f \mod m_z$, where $m_{z}$ is the unique maximal ideal of $\mathcal{O}_{Z,z}$.
When context is clear, we omit the $Z$ in $I_{z,Z}$ and $D_{z,Z}$.
\end{mydef}

Suppose we have a normal integral scheme $W$ with function field $K(W)$ and a finite Galois extension $L$ of $K(W)$ with Galois group $G=\text{Gal}(L/K(W))$. We take the normalization $W'$ of $Y$ in $L$ (which we now write as $K(W')$) to obtain a morphism of normal integral schemes $W'\rightarrow{W}$. In fact, we have $W=W'/G$ 
(See \cite[Proposition 4.2.4.]{tropabelian}). We now study this quotient locally. %

\begin{mydef}\label{FiniteGaloisExtension}
Let $A$ be a normal domain with fraction field $K$, let $L$ be a finite separable extension of $K$ and let $B$ be the integral closure of $A$ in $L$. We say that $B$ is an {\it{ integrally closed extension}} of $A$. If additionally $L$ is a Galois extension of $K$, we say that $B$ is an {\it{integrally closed Galois extension}} of $A$.
\end{mydef}
\begin{lemma}\label{LemmaNormal}
Let $B\supseteq{A}$ be an integrally closed extension as in Definition \ref{FiniteGaloisExtension}. Then $B$ is normal.
\end{lemma}
\begin{proof}
See \cite[Corollary 5.5]{Atiyah}.
\end{proof}
Let $B\supseteq{A}$ be an integrally closed Galois extension. For every $x$ in $B$ satisfying an integral relation over $A$, we find that $\sigma(x)$ satisfies the same integral relation. This implies that there is a natural homomorphism $G\rightarrow{\mathrm{Aut}_{A}(B)}$. For any subgroup $H\subset{G}$, we then denote the ring of invariants by
$$
B^{H}=\{x\in{B}:\sigma(x)=x\text{ for every }\sigma\in{H}\}.
$$

\begin{lemma}\label{NormalityRings}
Let $B\supseteq{A}$ be an integrally closed Galois extension with Galois group $G$. Then,
\begin{enumerate}
\item For any subgroup $H\subseteq{G}$, the ring $B^{H}$ is normal,
\item The ring $B^{H}$ is the integral closure of $A$ in the field of fractions of $B^{H}$, and
\item $B^{G}=A$.
\end{enumerate}
\end{lemma}
\begin{proof}
Let $L^{H}$ be the field of fractions of $B^{H}$. An easy verification then shows that $L^{H}\cap{B}=B^{H}$. Let $x\in{L^{H}}$ be integral over $B^{H}$. Viewing $x$ as an element of $L$, we find that $x\in{B}$ by Lemma \ref{LemmaNormal}. But then $x\in{L^{H}\cap{B}}=B^{H}$, as desired.

For the second part, let $C$ be the integral closure of $A$ in $L^{H}$ and let $x\in{C}$. Then $x\in{B}\cap{L^{H}}=B^{H}$, so $C\subseteq{B^{H}}$. Let $x\in{B^{H}}$. Then $x\in{B}$, so $x$ is integral over $A$. Thus $x\in{C}$.

For the last part, first note that $B^{G}$ contains $A$. Furthermore, let $x\in{B^{G}}$. Since $x\in{B}$ is integral over $A$ and $x\in{K}=L^{G}$, we find that $x\in{A}$ by normality of $A$. This finishes the proof.
\end{proof}

\begin{rem}\label{Noetherian}
If we suppose that $A$ is Noetherian, then any integrally closed extension $B\supseteq{A}$ as in Definition \ref{FiniteGaloisExtension} is finite as an $A$-module by \cite[Chapter 4, Proposition 1.25]{liu2}.
\end{rem}

Let $A\subset{B}$ be an integrally closed Galois extension. By the anti-equivalence of commutative rings and affine schemes, we obtain a morphism $\mathrm{Spec}(B)\rightarrow{\mathrm{Spec}(A)}$. Moreover, the action of the Galois group $G$ on $B$ naturally extends to a group action of $G$ on $\mathrm{Spec}(B)$ such that $\mathrm{Spec}(B)/G=\mathrm{Spec}(A)$, see \cite[Proposition 1.1, Page 88]{SGA1}. For any prime $\mathfrak{q}\in\mathrm{Spec}(B)$, we can thus consider its decomposition and inertia groups, as introduced in Definition \ref{DEF:id}. We will write $I_{\mathfrak{q}}$ and $D_{\mathfrak{q}}$ for the inertia and decomposition groups of $\mathfrak{q}$ and $k(\mathfrak{q})$ for its residue field. We now have the following result.
\begin{lemma}\label{exactseq}
Let $B\supseteq{A}$ be an integrally closed Galois extension with Galois group $G$. If $\mathfrak{q}\in\text{Spec}(B)$ is a prime lying over $\mathfrak{p}\subset{A}$, then $k(\mathfrak{q})/k(\mathfrak{p})$ is an algebraic normal extension and the following is exact:
$$
1\rightarrow{I_{\mathfrak{q}}}\rightarrow{}D_{\mathfrak{q}}\rightarrow{\text{Aut}(k(\mathfrak{q})/k(\mathfrak{p}))}\rightarrow{1},
$$
where $I_\mathfrak{q}$ is the inertia group of $\mathfrak{q}$ and $D_\mathfrak{q}$ is the decomposition group of $\mathfrak{q}$ (see Definition \ref{DEF:id}). 
\end{lemma}
\begin{proof}
This is \cite[Tag 0BRK]{stacks-project}, where $I_q$ is the kernel of the surjective morphism described there.
\end{proof}
In our case, as in Definition \ref{disbran}, the extension of residue fields is always Galois; 
our assumption that the degree of the Galois extension is relatively prime to the characteristic of the residue field implies separability.

We now show inertia groups directly measure and control ramification. This is the content of Proposition~\ref{ramprop} which we will use to relate inertia groups on $\mathcal{X}$ to inertia groups on the special fiber. 
We start by studying the inertia group $I_{\mathfrak{q}}$ and the invariant ring $B^{I_{\mathfrak{q}}}$ a bit closer. By Lemma \ref{NormalityRings}, we have that $B^{I_{\mathfrak{q}}}$ is normal. If we furthermore assume that $A$ is Noetherian, then $B^{I_{\mathfrak{q}}}$ is also finite as an $A$-module, see Remark \ref{Noetherian}.
\begin{lemma}\label{oneprime}
Let $B\supseteq{A}$ be an integrally closed Galois extension and let $\mathfrak{q}\in\mathrm{Spec}(B)$ be any prime ideal. Let $j:B^{I_{\mathfrak{q}}}\rightarrow{B}$ be the natural inclusion map and let $j^{*}:\text{Spec}(B)\rightarrow{\text{Spec}(B^{I_{\mathfrak{q}}})}$ be the corresponding map on the spectra. Then $(j^{*})^{-1}(j^{*}(\mathfrak{q}))=\{\mathfrak{q}\}$.
\end{lemma}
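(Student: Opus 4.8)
The plan is to use the standard fact that the group $I_{\mathfrak{q}}$ acts transitively on the primes of $B$ lying over a given prime of $B^{I_{\mathfrak{q}}}$, together with the fact that by construction $I_{\mathfrak{q}}$ fixes $\mathfrak{q}$. First I would observe that $B$ is integral over $B^{I_{\mathfrak{q}}}$ (since $B$ is integral over $A \subseteq B^{I_{\mathfrak{q}}}$), and that $B^{I_{\mathfrak{q}}}$ is precisely the ring of invariants of $B$ under the finite group $I_{\mathfrak{q}} \leq G$. Hence $B^{I_{\mathfrak{q}}} = B^{I_{\mathfrak{q}}}$ plays the role of the base ring and $B$ the role of the integral closure in the Galois situation for the group $I_{\mathfrak{q}}$: indeed, writing $C := B^{I_{\mathfrak{q}}}$, the extension $\operatorname{Frac}(B)/\operatorname{Frac}(C)$ is Galois with group $I_{\mathfrak{q}}$ (as $B$ is the integral closure of $A$ in $L$, it is also the integral closure of $C$ in $L$, and $L^{I_{\mathfrak{q}}} = \operatorname{Frac}(C)$).

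Next I would invoke the classical result (e.g. \cite[Tag 0BRI]{stacks-project}, or Bourbaki \emph{Commutative Algebra} V.\S2) that a finite group acting on a ring acts transitively on the set of primes of that ring lying above a fixed prime of the invariant subring. Applying this to $I_{\mathfrak{q}}$ acting on $B$ with invariant ring $C$: any prime $\mathfrak{q}'$ of $B$ with $\mathfrak{q}' \cap C = \mathfrak{q} \cap C$ — that is, with $j^{*}(\mathfrak{q}') = j^{*}(\mathfrak{q})$ — is of the form $\sigma(\mathfrak{q})$ for some $\sigma \in I_{\mathfrak{q}}$. But $\sigma \in I_{\mathfrak{q}}$ fixes $\mathfrak{q}$ by the very definition of the inertia (indeed decomposition) group, so $\mathfrak{q}' = \sigma(\mathfrak{q}) = \mathfrak{q}$. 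This gives $(j^{*})^{-1}(j^{*}(\mathfrak{q})) = \{\mathfrak{q}\}$, as claimed.

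I do not expect any serious obstacle here; the one point requiring a little care is the identification of $\operatorname{Gal}(\operatorname{Frac}(B)/\operatorname{Frac}(C))$ with $I_{\mathfrak{q}}$ and the verification that $B$ is the integral closure of $C$ in $L$, so that the transitivity statement genuinely applies to $I_{\mathfrak{q}}$ rather than to a larger group. Both follow formally: $C \subseteq B$ with $B$ integral over $C$, $B$ integrally closed in its fraction field, and $L = \operatorname{Frac}(B)$ is Galois over $\operatorname{Frac}(C) = L^{I_{\mathfrak{q}}}$ with group $I_{\mathfrak{q}}$ by Artin's theorem on fixed fields. With this in place the transitivity of the $I_{\mathfrak{q}}$-action on the fiber over $\mathfrak{q} \cap C$ is exactly the Stacks Project statement, and the conclusion is immediate.
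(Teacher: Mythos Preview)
Your proof is correct and follows essentially the same approach as the paper: both identify the fiber $(j^{*})^{-1}(j^{*}(\mathfrak{q}))$ with the $I_{\mathfrak{q}}$-orbit of $\mathfrak{q}$ (the paper via the quotient description $\text{Spec}(B)\to\text{Spec}(B)/I_{\mathfrak{q}}$ from SGA1, you via the transitivity statement from the Stacks Project/Bourbaki) and then observe that $I_{\mathfrak{q}}$ fixes $\mathfrak{q}$. Your additional care in verifying that $B$ is the integral closure of $B^{I_{\mathfrak{q}}}$ in $L$ and that $\text{Gal}(L/L^{I_{\mathfrak{q}}})=I_{\mathfrak{q}}$ is welcome but not strictly needed, since the transitivity of a finite group on primes over a prime of the invariant ring holds without any normality or Galois hypothesis.
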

\begin{proof}
The morphism $j^{*}$ coincides with the quotient map $\text{Spec}(B)\rightarrow{\text{Spec}(B)/I_{\mathfrak{q}}}$ (See \cite[Expos\'{e} V, Proposition 1.1., Page 88]{SGA1}). This means any other prime mapping to $\mathfrak{q}\cap{B^{I_{\mathfrak{q}}}}$ is of the form $\sigma(\mathfrak{q})$ for some $\sigma$ in $I_{\mathfrak{q}}$. But for those $\sigma$, we have $\sigma(\mathfrak{q})=\mathfrak{q}$. This concludes the proof. 
\end{proof}

\begin{lemma}\label{resfieldinert}
Let $B\supseteq{A}$ be an integrally closed Galois extension with Galois group $G$ and let $\mathfrak{q}\in\mathrm{Spec}(B)$ be any prime ideal. Let $B\supset{B^{I_{\mathfrak{q}}}}$ be the natural inclusion, and let $k(\mathfrak{q})^{\text{sep}}$ be the separable closure of $k(\mathfrak{q}\cap{A})$ in $k(\mathfrak{q})$. 
Then $(k(\mathfrak{q}))^{\text{sep}}=k(\mathfrak{q}\cap{B^{I_{\mathfrak{q}}}})$. 
\end{lemma}
\begin{proof}
Consider the Galois extension $L\supset{L^{I_{\mathfrak{q}}}}$ with Galois group $I_{\mathfrak{q}}$. We find $I_{\mathfrak{q}}=D_{\mathfrak{q}}$ and so the automorphism group ${\text{Aut}(k(\mathfrak{q})/k(\mathfrak{q}\cap{B^{I_{\mathfrak{q}}}}))}$ is trivial by Lemma \ref{exactseq}. This automorphism group is isomorphic to the Galois group of the separable closure of $k(\mathfrak{q}\cap{B^{I_{\mathfrak{q}}}})$ in $k(\mathfrak{q})$. By Galois theory, $k(\mathfrak{q}\cap{B^{I_{\mathfrak{q}}}})$ is separably closed inside $k(\mathfrak{q})$. By \cite[Proposition 2.2., page 92]{SGA1}, the extension $B^{I_{\mathfrak{q}}}\supset{A}$ is \'{e}tale at $\mathfrak{q}\cap{B^{I_{\mathfrak{q}}}}$, so the residue field extension $k(\mathfrak{q}\cap{}B^{I_{\mathfrak{q}}})\supset{k(\mathfrak{q}\cap{A})}$ is separable by \cite[Tag 00U3]{stacks-project} and the fact that \'{e}tale morphisms are stable under base change. Thus $k(\mathfrak{q}\cap{B^{I_{\mathfrak{q}}}})\subseteq{(k(\mathfrak{q}))^{\text{sep}}}$, and every element of $k(\mathfrak{q})$ that is separable over $k(\mathfrak{q}\cap{A})$ is also separable over the field $k(\mathfrak{q}\cap{}B^{I_{\mathfrak{q}}})$. We thus find $(k(\mathfrak{q}))^{\text{sep}}=k(\mathfrak{q}\cap{B^{I_{\mathfrak{q}}}})$, as desired. 
\end{proof}
\begin{cor}\label{corchar}
Let $B\supseteq{A}$ be an integrally closed Galois extension and let $\mathfrak{q}\in\mathrm{Spec}(B)$ be any prime ideal. Suppose $\text{char}(k(\mathfrak{q}))\nmid{|G|}$. Then $k(\mathfrak{q})=k(\mathfrak{q}\cap{B^{I_{\mathfrak{q}}}})$.
\end{cor}
Inertia groups measure ramification, as the following proposition describes.
\begin{prop}\label{ramprop}
Let $B\supseteq{A}$ be an integrally closed Galois extension with Galois group $G$ as in Definition \ref{FiniteGaloisExtension} and let $\mathfrak{q}\in\mathrm{Spec}(B)$ be any prime ideal. \begin{enumerate}
\item Consider the subring $B\supset{B^{I_{\mathfrak{q}}}}\supset{A}=B^{G}$. Then $B^{I_{\mathfrak{q}}}\supset{A}$ is \'{e}tale at $\mathfrak{q}\cap{}B^{I_{\mathfrak{q}}}$.
\item More generally, consider any subgroup $H$ of $G$. Then $B^{H}\supset{A}$ is \'{e}tale at $\mathfrak{q}\cap{B^{H}}$ if and only if $H\supseteq{I_{q}}$. 
\end{enumerate}
\end{prop}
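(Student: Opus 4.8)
The plan is to reduce both implications of Part~(2) to a single ramification computation inside a Galois cover of complete local rings, after localizing and completing at $\mathfrak{p}:=\mathfrak{q}\cap A$; Part~(1) is disposed of first and then feeds the ``if'' direction. For Part~(1): it is \cite[Proposition~2.2., page~92]{SGA1}, already used inside the proof of Lemma~\ref{resfieldinert} (the inertia field is unramified over the base, its residue extension being the separable closure of $k(\mathfrak{p})$ in $k(\mathfrak{q})$ by Lemma~\ref{resfieldinert}); to pass from ``unramified'' to ``\'etale'' one needs only flatness, which is automatic in the situations where the Proposition is applied because $A$ is then regular (a discrete valuation ring, or a $2$-dimensional regular local ring) and a finite normal---hence Cohen--Macaulay in dimension $\le 2$---extension of such a ring is flat by the local criterion of flatness.

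For Part~(2), set $\mathfrak{q}'=\mathfrak{q}\cap B^{H}$. Since forming $H$-invariants commutes with the flat base change $A_{\mathfrak{p}}\to\widehat{A_{\mathfrak{p}}}$, and the completion of a Noetherian semilocal ring is the product of the completions at its maximal ideals (over which $G$ acts transitively), one gets
$$
\widehat{A_{\mathfrak{p}}}=\bigl(\widehat{B_{\mathfrak{q}}}\bigr)^{D_{\mathfrak{q}}},\qquad \widehat{(B^{H})_{\mathfrak{q}'}}=\bigl(\widehat{B_{\mathfrak{q}}}\bigr)^{H\cap D_{\mathfrak{q}}},
$$
$D_{\mathfrak{q}}$ being the decomposition group. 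As \'etaleness, flatness and unramifiedness all descend along the faithfully flat map $A_{\mathfrak{p}}\to\widehat{A_{\mathfrak{p}}}$, it suffices to show that inside the $D_{\mathfrak{q}}$-Galois cover of complete local rings $\widehat{B_{\mathfrak{q}}}/\widehat{A_{\mathfrak{p}}}$---whose inertia at the closed point is $I_{\mathfrak{q}}$, and whose residue extension is Galois and separable by Lemma~\ref{exactseq} and the standing hypothesis $\text{char}(k(\mathfrak{q}))\nmid|G|$---the subring $(\widehat{B_{\mathfrak{q}}})^{H\cap D_{\mathfrak{q}}}$ is \'etale over $\widehat{A_{\mathfrak{p}}}$ if and only if $I_{\mathfrak{q}}\subseteq H$ (equivalently $I_{\mathfrak{q}}\subseteq H\cap D_{\mathfrak{q}}$, since $I_{\mathfrak{q}}\subseteq D_{\mathfrak{q}}$).

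For the ``if'' direction, assuming $I_{\mathfrak{q}}\subseteq H\cap D_{\mathfrak{q}}$ we have $I_{\mathfrak{q}}\trianglelefteq H\cap D_{\mathfrak{q}}$ (because $I_{\mathfrak{q}}\trianglelefteq D_{\mathfrak{q}}$, Lemma~\ref{exactseq}), so $(\widehat{B_{\mathfrak{q}}})^{H\cap D_{\mathfrak{q}}}=\bigl((\widehat{B_{\mathfrak{q}}})^{I_{\mathfrak{q}}}\bigr)^{(H\cap D_{\mathfrak{q}})/I_{\mathfrak{q}}}$; by Part~(1), $(\widehat{B_{\mathfrak{q}}})^{I_{\mathfrak{q}}}$ is finite \'etale over $\widehat{A_{\mathfrak{p}}}$, and a quotient of a finite \'etale algebra by a finite group acting by automorphisms over the base is again finite \'etale over the base, which finishes this direction with no flatness to verify by hand. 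For the ``only if'' direction, if $(\widehat{B_{\mathfrak{q}}})^{H\cap D_{\mathfrak{q}}}/\widehat{A_{\mathfrak{p}}}$ is \'etale it is unramified at the closed point, of ramification index $1$ there; by multiplicativity of ramification indices in the tower $\widehat{A_{\mathfrak{p}}}\subseteq(\widehat{B_{\mathfrak{q}}})^{H\cap D_{\mathfrak{q}}}\subseteq\widehat{B_{\mathfrak{q}}}$, and the fact that in a tamely ramified Galois cover (in practice of discrete valuation rings) with separable residue extension the ramification index equals the order of the inertia group, this index equals $|I_{\mathfrak{q}}|/|H\cap I_{\mathfrak{q}}|$, so $I_{\mathfrak{q}}=H\cap I_{\mathfrak{q}}$, i.e.\ $I_{\mathfrak{q}}\subseteq H$. (This computation also gives the ``if'' direction up to the flatness remark.)

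The main obstacle is not conceptual depth but keeping the ramification-theoretic language honest when $A$, hence $\widehat{A_{\mathfrak{p}}}$, is $2$-dimensional: ``ramification index'' and ``different'' must then be read via the module of differentials, or by restricting to codimension one, and one exploits that $A$ is regular in the applications so as to reduce to the discrete valuation ring case (where flatness of the intermediate extensions is automatic as well). The hypothesis that really does the work is tameness: for $\text{char}(k(\mathfrak{q}))\nmid|G|$ the inertia groups are cyclic of order prime to the residue characteristic and the different of a ramified extension has valuation exactly $e-1$; without this Part~(1) already fails (wild ramification), so the whole Proposition is in essence a statement about tame covers, matching the paper's running assumptions. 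One small point worth isolating is the identity $\widehat{(B^{H})_{\mathfrak{q}'}}=(\widehat{B_{\mathfrak{q}}})^{H\cap D_{\mathfrak{q}}}$: it uses only that invariants commute with flat base change and with idempotent decomposition, and needs no hypothesis on $|G|$---that is required solely for the separability of the residue extensions.
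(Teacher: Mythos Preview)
Your argument is correct in spirit and, for Part~(1) and the ``if'' half of Part~(2), matches the paper: both simply invoke \cite[Proposition~2.2]{SGA1}. The genuine divergence is in the ``only if'' direction. The paper does not complete or speak of ramification indices at all; instead it base-changes the (locally) \'etale map $B^{H}\supset A$ along $A\to B^{I_{\mathfrak q}}$, observes that $B^{H}\cdot B^{I_{\mathfrak q}}$ is then locally free of rank $m=[L^{I_{\mathfrak q}}\cdot L^{H}:L^{I_{\mathfrak q}}]$ over $B^{I_{\mathfrak q}}$, reduces modulo $\mathfrak q_{I}$, and uses Lemma~\ref{oneprime} (locality) together with Lemma~\ref{resfieldinert}/Corollary~\ref{corchar} (triviality of the separable residue extension over the inertia field) to force $m=1$, whence $L^{H}\subseteq L^{I_{\mathfrak q}}$ and $H\supseteq I_{\mathfrak q}$. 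This is a pure Galois-categorical dimension count that never leaves the normal-domain setting and needs no regularity hypothesis on $A$.

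Your route---pass to completions, then invoke multiplicativity of ramification indices $e(\mathfrak q'|\mathfrak p)=|I_{\mathfrak q}|/|H\cap I_{\mathfrak q}|$---is the classical number-theoretic proof and is perfectly valid when the local rings are DVRs. You correctly flag that in the $2$-dimensional case this language must be reinterpreted (via differentials or by purity over a regular base), and that in the paper's applications $A$ is regular so this is harmless. The trade-off is that the paper's argument proves the proposition exactly as stated, for an arbitrary normal domain $A$, with no side conditions and no appeal to purity; your argument proves it cleanly only after imposing regularity (or restricting to height-one primes), which is a strictly weaker statement even if it suffices downstream. If you want to match the paper's generality without abandoning your framework, the cleanest fix is to replace the ramification-index step by the same residue-field dimension count the paper uses, applied inside your completed tower $(\widehat{B_{\mathfrak q}})^{H\cap D_{\mathfrak q}}\supset(\widehat{B_{\mathfrak q}})^{D_{\mathfrak q}}$.
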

\begin{proof}
The fact that if $H\supseteq{I_{\mathfrak{q}}}$, then $B^{H}\supset{A}$ is \'{e}tale at $\mathfrak{q}\cap{B^{H}}$ is \cite[Proposition 2.2., page 92]{SGA1}. Taking $H=I_{\mathfrak{q}}$ also proves the first statement. To prove the inclusion $H\supseteq{I_{\mathfrak{q}}}$, we use the material in \cite[\href{https://stacks.math.columbia.edu/tag/0BSK}{Section 0BSK}]{stacks-project} on strict Henselizations. We first note that we can localize $A$ and assume that it is local with 
maximal ideal $\mathfrak{p}=\mathfrak{q}\cap{A}$. The strict Henselization of the triple $(A,\mathfrak{p},k(\mathfrak{p}))$ is then given by $A^{\mathrm{sh}}:=\lim_{(S,\mathfrak{p}_{S},\alpha)}S$, where the limit is taken over the filtered category of all triples $(S,\mathfrak{p}_{S},\alpha)$, where $A\rightarrow{S}$ is an \'{e}tale ring map, $\mathfrak{p}_{S}$ is a prime ideal in $S$ lying above $\mathfrak{p}$ and $\alpha:k(\mathfrak{p}_{S})\rightarrow{k(\mathfrak{p})^{\mathrm{sep}}}$ is an embedding of residue fields, see \cite[\href{https://stacks.math.columbia.edu/tag/04GN}{Lemma 04GN}]{stacks-project}. Now consider the integral closure $A^{\mathrm{sep}}$ of $A$ in the separable closure $K^{\mathrm{sep}}$. We choose a prime ideal $\mathfrak{q}^{\mathrm{sep}}$ in $A^{\mathrm{sep}}$ lying over $\mathfrak{q}\in{B}$. The localization $A^{\mathrm{sep}}_{\mathfrak{q}^{\mathrm{sep}}}$ is then a strictly Henselian ring, and this induces an inclusion $A^{\mathrm{sh}}\rightarrow{A^{\mathrm{sep}}_{\mathfrak{q}^{\mathrm{sep}}}}$, see the beginning of \cite[\href{https://stacks.math.columbia.edu/tag/0BSD}{Section 0BSD}]{stacks-project}. As in the proof of \cite[\href{https://stacks.math.columbia.edu/tag/0BSW}{Lemma 0BSW}]{stacks-project}, we then have 
\begin{equation}
    A^{\mathrm{sh}}=(A^{\mathrm{sep}}_{\mathfrak{q}^{\mathrm{sep}}})^{I_{\mathfrak{q}^{\mathrm{sep}}}},
\end{equation}
where $I_{\mathfrak{q}^{\mathrm{sep}}}$ is the inertia group as defined in Definition \ref{DEF:id}. 

We now relate this material to our subgroup $H\subset{G}$. In terms of infinite Galois theory, $H$ corresponds to a closed (and open) subgroup $G_{L}\subset{}\overline{H}\subset{G}_{K}$ such that $\overline{H}/G_{L}=H$. Similarly, we have a natural surjective map
\begin{equation}
    I_{\mathfrak{q}^{\mathrm{sep}}}\rightarrow{I_{\mathfrak{q}}}
\end{equation}
obtained by restricting automorphisms to $L$, see \cite[\href{https://stacks.math.columbia.edu/tag/0BSX}{Lemma 0BSX}]{stacks-project}. The kernel of this map is  $G_{L}\cap{I_{\mathfrak{q}^{\mathrm{sep}}}}$, inducing the isomorphism $I_{\mathfrak{q}}=I_{\mathfrak{q}^{\mathrm{sep}}}/(G_{L}\cap{I_{\mathfrak{q}^{\mathrm{sep}}}})$. 
By our assumption on $H$, we have that $B^{H}\supset{A}$ is \'{e}tale at $\mathfrak{q}_{H}:=\mathfrak{q}\cap{B^{H}}$. This means that there exists a $g\in{B^{H}}\backslash{\mathfrak{q}_{H}}$ such that the localization $A\to{(B^{H})_{g}}$ is \'{e}tale. After choosing an embedding $k(\mathfrak{q}_{H})\to{k(\mathfrak{p}^{\mathrm{sep}})}$, we then obtain maps \begin{equation}
    A\rightarrow{}(B^{H})_{g}\rightarrow{A^{\mathrm{sh}}}=(A^{\mathrm{sep}}_{\mathfrak{q}^{\mathrm{sep}}})^{I_{\mathfrak{q}^{\mathrm{sep}}}}.
\end{equation} 

These are easily seen to be injective. Since the fraction field of $(B^{H})_{g}$ is the invariant field of $\overline{H}$, we have 
$\overline{H}\supset{I_{\mathfrak{q}^{\mathrm{sep}}}}$ by infinite Galois theory. We then obtain an inclusion 
\begin{equation}
    I_{\mathfrak{q}}=I_{\mathfrak{q}^{\mathrm{sep}}}/(G_{L}\cap{I_{\mathfrak{q}^{\mathrm{sep}}}})\subseteq{\overline{H}/G_{L}=H},
\end{equation}
as desired. This finishes the proof. 
\end{proof}

For any subfield $K\subseteq{K'}\subseteq{L}$, we can write $K'=L^{H}$ for some subgroup $H$ of $G$ by Galois theory. By Proposition \ref{ramprop}, $B^{H}\supseteq{B^{G}=A}$ is ramified at some point $x'$ if and only if the inertia group $I_{x}$ of some point $x$ lying above is {\it{not}} contained in $H$. In other words, we can describe ramification in terms of Galois theory.
This criterion turns out to be very useful in relating different inertia groups $I_{x}$ and $I_{y}$ for points $x$ and $y$ in $\text{Spec}(B)$. For instance, if $B^{I_{x}}\supset{A}$ is \'{e}tale at the image of $y$ in $B^{I_{x}}$, and $B^{I_{y}}$ is \'{e}tale at the image of $x$ in $B^{I_{y}}$, then $I_{y}=I_{x}$. We use this in Section \ref{compinert}.  
We make two further assumptions on the morphism $\phi_{\mathcal{X}}:\mathcal{X}\rightarrow{\mathcal{Y}}$ of models over $\text{Spec}(R)$. 
We assume the ramification points of $\phi:X\rightarrow{Y}$ are rational over $K$. 
 We assume the residue field $k$ is large enough so that for every intersection point $x\in\mathcal{X}$, we have $D_{x}=I_{x}$.

\begin{exa}
Let us find out what these decomposition and inertia groups are for a disjointly branched morphism $\phi_{\mathcal{X}}:\mathcal{X}\rightarrow{\mathcal{Y}}$ of semistable models. 
\begin{enumerate}
\item Let $x\in\mathcal{X}$ be an intersection point. That is, it is a point such that $\hat{\mathcal{O}}_{\mathcal{X},x}\simeq{R[[u,v]]/(uv-\pi^{n})}$ for some positive integer $n$. By our second assumption in Definition \ref{disbran} we have $D_{x}=I_{x}$. Since the action of $G$ is transitive on the edges lying above $\phi_{\mathcal{X}}(x)$, there are $|G|/|I_{x}|$ edges lying above $\phi_{\mathcal{X}}(x)$.  
\item Let $x$ be the generic point of an irreducible component $\Gamma_{x}$ in the special fiber $\mathcal{X}_{s}$. Let $y$ and $\Gamma_{y}$ be their respective images in $\mathcal{Y}_{s}$. By our second assumption for disjointly branched morphisms, the inertia group $I_{x}$ is trivial. Thus, the decomposition group can be identified with the automorphisms of the function field $k(x)$ of the component $\Gamma_{x}$ fixing the function field $k(y)$ of the component $\Gamma_{y}$, by Lemma~\ref{exactseq}. This implies $\Gamma_{x}/D_{x}=\Gamma_{y}$ as curves over the residue field, since morphisms of smooth curves are determined by the corresponding inclusions of function fields. So, $\Gamma_{x}$ and $\Gamma_{y}$ are smooth, since $\mathcal{X}$ and $\mathcal{Y}$ are strongly semistable.

\item Let $x\in{X}$ be a generic ramification point. Then $|I_{x}|$ is the ramification degree. This follows from the fact that $x$ is totally ramified in the extension $L\supset{L^{I_{x}}}$, which has degree $|I_{x}|$.

Let us study an example where the decomposition group $D_{x}$ for a generic branch point is bigger than $I_{x}$.  
Take the Galois covering $(x,y)\mapsto{x}$ for the curve $X$ defined by $y^4=x^2\cdot{(x+2)}$ over $\mathbb{Q}(i)$. This is Galois with Galois group $G=\mathbb{Z}/4\mathbb{Z}$, where the action on fields comes from multiplication by $i$ on $y$ and the identity on $x$. Let us find the normalization of the algebra $A:=\mathbb{Q}(i)[x,y]/(y^4-x^2\cdot{(x+2)})$. 
The integral element $z=y^2/x$ satisfies 
$z^2=x+2$.
The maximal ideal $\mathfrak{m}=(x,y,z^2-2)$ is then locally principal with generator $y$, as we can write
\begin{equation*}
z^2-2=x=\dfrac{y^2}{z}
\end{equation*}
in the localization of $A':=A[z]/(z^2-x-2)$ 
at $\mathfrak{m}$. Since $A$ was already normal at the other primes (by the Jacobi criterion for instance), $A'$ is the normalization. Here we use that a domain is normal if and only if it is normal at all its localizations.

The Galois group $\mathbb{Z}/4\mathbb{Z}$ fixes $\mathfrak{m}$, so $D_{\mathfrak{m}}=G$. By inspecting the action on the residue field, we have $|I_{\mathfrak{m}}|=2$. Indeed, the automorphism defined by $\sigma(y)=iy$ sends $z$ to $-z$, which is nontrivial on the residue field. 
In this case the decomposition group is strictly larger than the inertia group.

If we consider the curve over the field $\mathbb{Q}(i)(\sqrt{2})$, the situation changes. The above equations still define the normalization, but $\mathfrak{m}=(x,y,z^2-2)$ is no longer maximal. There are now two maximal ideals lying above $\mathfrak{m}_{0}=(x)$, namely $\mathfrak{m}_{\pm}=(x,y,z\pm\sqrt{2})$. We have $|I_{\mathfrak{m}_{\pm}}|=|D_{\mathfrak{m}_{\pm}}|=2$. For disjointly branched morphisms, we assume extensions of this form have already been made.
\end{enumerate}
\end{exa}

\subsection{Specializing divisors}\label{specialdivisor}
In this section, we define the specialization map $\rho:\mathrm{Div}(X)\rightarrow{\mathrm{Div}(\Sigma(\mathcal{X}))}$ for divisors on a curve $X$ and a strongly semistable regular model $\mathcal{X}$ for $X$. This then gives a divisor on the weighted metric graph associated to $\mathcal{X}$. This specialization map will be used to explicitly calculate the Laplacian of an element $f\in{K(X)}$. We will first define divisors and Laplacians on $\Sigma(\mathcal{X})$ in this discrete case. These definitions will be similar to the definitions given in Section \ref{galoiscovers} and we invite the reader to compare the discrete and continuous variants. 

Let $\mathcal{X}$ be a strongly semistable regular model for $X$ with intersection graph $\Sigma(\mathcal{X})$. Let  $\mathrm{Div}(\mathcal{X})$ be the group of Cartier divisors on $\mathcal{X}$ and let $\mathrm{Div}_{s}(\mathcal{X})$ be the subgroup of Cartier divisors with support on the special fiber $\mathcal{X}_{s}$. Let $\{\Gamma_{1},...,\Gamma_{r}\}$ be the set of irreducible components in $\mathcal{X}_{s}$. For any vertex $v\in\Sigma(\mathcal{X})$, we let $\Gamma_{v}$ be the associated irreducible component.

We define $\text{Div}(\Sigma(\mathcal{X}))$ to be the free abelian group on the vertices $V(\Sigma(\mathcal{X}))$ of $\Sigma(\mathcal{X})$. 
Writing $D\in{\text{Div}(\Sigma(\mathcal{X}))}$ as $D=\sum_{v\in{V(\Sigma(\mathcal{X}))}}c_{v}(v)$, we define the degree map as $\text{deg}(D)=\sum_{v\in{V(\Sigma(\mathcal{X}))}}c_{v}$. We let $\text{Div}^{0}(\Sigma(\mathcal{X}))$ be the group of divisors of degree zero on $\Sigma(\mathcal{X})$.

Let $\mathcal{M}(\Sigma(\mathcal{X}))$ be the group of $\mathbb{Z}$-valued functions on $V(\Sigma(\mathcal{X}))$. Define the {\emph{Laplacian operator}} $\Delta:\mathcal{M}(\Sigma(\mathcal{X}))\rightarrow\text{Div}^{0}(\Sigma(\mathcal{X}))$ by 
\begin{equation*}
\Delta(\phi)=\sum_{v\in{V(\Sigma(\mathcal{X}))}}\sum_{e=vw\in{E(\Sigma(\mathcal{X}))}}(\phi(v)-\phi(w))(v).
\end{equation*}   
The fact that the image of $\Delta$ is in $\text{Div}^{0}(\Sigma(\mathcal{X}))$ is contained in \cite[Corollary 1]{bakerfaber}, where it is found as a consequence of the self-adjointness of the Laplacian operator. It also follows in this case by a direct computation. 
We then define the group of principal divisors to be the image of the Laplacian operator:
\begin{equation*}
\text{Prin}(\Sigma(\mathcal{X})):=\Delta(\mathcal{M}(\Sigma(\mathcal{X}))).
\end{equation*}

We will now transport algebraic divisors in $\mathrm{Div}(X)$ to divisors on the graph $\Sigma(\mathcal{X})$. To do this, we will use the intersection pairing on $\mathcal{X}$. This is a bilinear pairing  
$$\mathrm{Div}(\mathcal{X})\times{\mathrm{Div}_{s}(\mathcal{X})}\rightarrow{\mathbb{Z}},$$ which we will denote (see \cite[Chapter 9, Theorem 1.12]{liu2})
by $(\mathcal{D}_{1}\cdot{\mathcal{D}_{2}})\in\mathbb{Z}$. This
then gives rise to the \emph{specialization} homomorphism $\rho:\mathrm{Div}(\mathcal{X})\rightarrow{\mathrm{Div}(\Sigma(\mathcal{X}))}$ defined by
\begin{equation}\label{SpecializationHomomorphism}
\rho(\mathcal{D})=\sum_{v\in\Sigma(\mathcal{X})}(\mathcal{D}\cdot{\Gamma_{v}})(v),
\end{equation}
see \cite[Section 2]{baker} and \cite[Section 2.2.1]{tropabelian}.

For any $D\in\mathrm{Div}(X)$, the closure inside $\mathcal{X}$ defines a Cartier divisor which we will denote by $\overline{D}$. This gives a homomorphism $\mathrm{Div}(X)\rightarrow{\mathrm{Div}(\mathcal{X})}$. Composing it with the specialization homomorphism, we obtain a homomorphism $\mathrm{Div}(X)\rightarrow{\mathrm{Div}(\Sigma(\mathcal{X}))}$ which we will again denote by $\rho$. 
We then have
\begin{lemma}
Let $\mathcal{X}$ be a strongly semistable model over $R$ for a smooth, proper, geometrically connected curve $X$, let $\Sigma(\mathcal{X})$ be its intersection graph and let $\rho:\mathrm{Div}(\mathcal{X})\rightarrow{\mathrm{Div}(\Sigma(\mathcal{X}))}$ be the specialization map from Equation \ref{SpecializationHomomorphism}. Then 
$\rho(\mathrm{Prin}(X))\subset{\mathrm{Prin}(\Sigma(\mathcal{X}))}$.
\end{lemma}
\begin{proof}
For any $f\in{K(X)^{*}}$, let $\mathrm{div}_{\eta}(f)$ be the induced divisor on $X$ and let $\mathrm{div}(f)$ be the induced divisor on $\mathcal{X}$. Write $D\in\mathrm{Prin}(X)\backslash{\{0\}}$ as $D=\mathrm{div}_{\eta}(f)$ for some $f\in{K(X)^{*}}$. We then have 
\begin{equation*}
\mathrm{div}(f)=\mathrm{div}_{\eta}(f)+V_{f},
\end{equation*}
where $V_{f}$ is the vertical divisor associated to $f$. By \cite[Chapter 9, Theorem 1.12.c]{liu2} we have that $\rho(\mathrm{div}(f))=0$. By \cite[Lemma 3.3.1]{tropabelian} we then have that $\rho(\Gamma_{v})\in\mathrm{Prin}(\Sigma(\mathcal{X}))$ for every $v\in\Sigma(\mathcal{X})$, so $\rho(V_{f})\in\mathrm{Prin}(\Sigma(\mathcal{X}))$. This then gives $\rho(\mathrm{div}_{\eta}(f))\in\mathrm{Prin}(\Sigma(\mathcal{X}))$, as desired.  
\end{proof}
In other words, for every $f\in{K(X)^{*}}$, there is a $\phi$ such that $\rho(\mathrm{div}_{\eta}(f))=\Delta(\phi)$. We now consider this specialization homomorphism for $K$-rational points $P\in{X(K)}$. By \cite[Chapter 9, Corollary 1.32]{liu2}, we find that $\rho(P)=(v_{\Gamma})$ for a unique irreducible component $\Gamma\subset{\mathcal{X}_{s}}$. Note that such a component need not exist for nonregular models.
\begin{exa}
Let $\mathcal{X}:=\mathrm{Proj}(R[X,T,W]/(XT-\pi^{2}W^2))$ where $R[X,T,W]/(XT-\pi^{2}W^2)$ has the usual grading. Let $U:=\mathrm{Spec}(R[x,t]/(xt-\pi^{2}))$ be the affine open of $\mathcal{X}$ given by $D_{+}(W)$. Note that $\mathcal{X}$ is not regular at the point $(x,t,\pi)$. Let $P=(x-\pi,t-\pi)$ be a $K$-rational point of $X$. The closure of $P$ is then given by $\{P,\overline{P}\}$, where $\overline{P}=(x-\pi,t-\pi,\pi)$. Note that $\overline{P}$ lies on both irreducible components of the special fiber. So, we do not directly have an associated divisor on the dual graph of~$\mathcal{X}$.  

The intersection graph $\Sigma(\mathcal{X})$ consists of two vertices $\{v_{1},v_{2}\}$ with an edge $e$ of length two. Taking a desingularization above this edge, we obtain a regular model $\mathcal{X}'$ with intersection graph consisting of three vertices $\{v_{1},v',v_{2}\}$ and two edges $\{e_{1},e_{2}\}$. Here $v_{1}$ and $v'$ are connected by $e_{1}$, and $v'$ and $v_{2}$ are connected by $e_{2}$. The original edge $e$ has been subdivided into  two edges $\{e_{1},e_{2}\}$ and the vertex $v'$ in this new intersection graph $\Sigma(\mathcal{X}')$. The point $P$ now specializes to the vertex $v'$ in the middle. For explicit equations defining this model $\mathcal{X}'$, see \cite[Chapter 8, Example 3.53]{liu2}. 
\end{exa}

\subsection{Comparing inertia groups}\label{compinert}
Consider a disjointly branched morphism $\phi_{\mathcal{X}}:\mathcal{X}\rightarrow{\mathcal{Y}}$. 
Let $\Gamma\subset{\mathcal{X}_{s}}$ be any irreducible component in the special fiber. There is a natural Galois morphism of smooth curves $\phi_{\Gamma}:\Gamma\rightarrow{\Gamma'}$ where $\Gamma'$ is an irreducible component in the special fiber $\mathcal{Y}_{s}$. This uses the condition on the characteristic of $k$. To see this, one can use \cite[Section 4.5]{tropabelian} or Lemma \ref{exactseq} and Corollary \ref{corchar}.
The morphism $\phi_{\Gamma}$ is induced by the natural Galois morphism of function fields $k(\Gamma')\rightarrow{k(\Gamma)}$, which is obtained from the natural map of discrete valuation rings $\mathcal{O}_{\mathcal{Y},y_{\Gamma'}}\rightarrow{\mathcal{O}_{\mathcal{X},y_{\Gamma}}}$. Here $y_{\Gamma}$ and $y_{\Gamma'}$ are the generic points of the irreducible components $\Gamma$ and $\Gamma'$.

We recall the definition of the natural \emph{reduction maps} associated to $\mathcal{X}$ and $\mathcal{Y}$.
Let us describe this map $r_{\mathcal{X}}$ for $\mathcal{X}$. Let $X^{0}$ be the set of closed points of the generic fiber $X$ of $\mathcal{X}$. For every closed point $x$, we can consider its closure $\overline{\{x\}}$ inside $\mathcal{X}$. This closure is then an irreducible scheme, finite over $\text{Spec}(R)$. Since $R$ is Henselian, $\overline{\{x\}}$ is local, giving a unique closed point. We let $r_{\mathcal{X}}(x)$ be this closed point. 
We now also extend this map to closed points of $\mathcal{X}$ for convenience.
Let $x\in\mathcal{X}$ be a closed point, considered as an $R$-scheme. We can take the base change $\{x\}\times_{\mathrm{Spec}(R)}{\mathrm{Spec}(k)}$. There is a closed immersion of this scheme into the special fiber $\mathcal{X}_{s}$ and we define its image in $\mathcal{X}_{s}$ to be $r_{\mathcal{X}}(x)$. In other words, we consider this closed point as a point on the special fiber. Note that intersection points also naturally define closed points in $\mathcal{X}$. Here an intersection point is a point $x\in\mathcal{X}$ such that $\hat{\mathcal{O}}_{\mathcal{X},x}\simeq{R[[u,v]]/(uv-\pi^{k})}$ for some $k\in\mathbb{Z}_{\geq{0}}$.

\begin{prop}\label{ramind2}
Let $\phi_{\mathcal{X}}:\mathcal{X}\rightarrow{\mathcal{Y}}$ be a disjointly branched morphism as in Definition \ref{disbran} and let $x\in\mathcal{X}$ be a closed point of the generic fiber or an intersection point. Let $\Gamma$ be any irreducible component in the special fiber $\mathcal{X}_{s}$ containing $r_{\mathcal{X}}(x)$ (as described in the preceding paragraph).
Then
$$
I_{x,\mathcal{X}}=I_{r_{\mathcal{X}}(x),\Gamma}$$
where the second inertia group is an inertia group of the Galois covering $\Gamma\rightarrow{\Gamma'}$ on the special fiber.
\end{prop}

\begin{proof}
We first let $z\in\mathcal{X}$ be any closed point and $y$ the generic point of $\Gamma$. 
We then have a natural injection $D_{z,\mathcal{X}}\rightarrow{D_{y,\mathcal{X}}}$. For $z$ smooth this follows directly from the fact that $y$ is the unique generic point under $z$. For $z$ an intersection point, this follows from 
\cite[Proposition 5.2.1]{tropabelian}. By Lemma \ref{exactseq}, $D_{y,\mathcal{X}}$ can be identified with the Galois group of the function field extension $k(\Gamma)\supset{k(\Gamma')}$. The image of $D_{x,\mathcal{X}}$ in this Galois group is then in fact equal to $D_{r_{\mathcal{X}}(x),\Gamma}$. We thus see $D_{x,\mathcal{X}}=D_{r_{\mathcal{X}}(x),\Gamma}$ for any closed point $x$. By our assumption on the residue field, these decomposition groups are equal to their respective inertia groups and we have
$I_{x,\mathcal{X}}=I_{r_{\mathcal{X}}(x),\Gamma}$.

Using this identification, the case where $x$ is an intersection point immediately follows.  We are thus left with the case of a generic ramification point $x$ of the morphism $\phi:X\rightarrow{Y}$.
Let $z:=r_{\mathcal{X}}(x)$. 
For any subgroup $H$ of $G$, we let $z_{H}$ be the image of $z$ under the natural map $X\rightarrow{X/H}$. We show $I_{x,\mathcal{X}}=I_{z,\mathcal{X}}$. By our earlier considerations, we then see $I_{x,\mathcal{X}}=I_{z,\Gamma}$. 

If $\sigma\in{I_{x,\mathcal{X}}}$, then $\sigma\in{D_{x,\mathcal{X}}}$. Then $\sigma$ must fix $z$ as well, because otherwise there would be at least two points in the closure of $x$ lying above the special fiber. So $\sigma\in{D_{z,\mathcal{X}}}$. But by our earlier assumption on the residue field $k$, we have 
 $D_{z,\mathcal{X}}=I_{z,\mathcal{X}}$. This yields ${I_{x,\mathcal{X}}}\subseteq{I_{z,\mathcal{X}}}$.

For the other inclusion, we use the following criterion. 
Let $H$ be a subgroup of $G$. Let $x_{H}$ be the image of $x$ in $\mathcal{X}/H$. The induced map $\mathcal{X}/H\rightarrow{\mathcal{Y}}$ is \'{e}tale at $x_{H}$ if and only if $H\supseteq{I_{x}}$.
This is a consequence of the second part of Proposition \ref{ramprop} in Section \ref{disbransect}.

We now only need to show $\mathcal{X}/I_{x}\rightarrow{\mathcal{Y}}$ is unramified at $z_{I_{x}}$. Suppose it is ramified at $z_{I_{x}}$. Then $z_{G}$ is a branch point of the covering $\mathcal{X}/I_{x}\rightarrow{\mathcal{Y}}$. Since $\phi_{X}$ is disjointly branched, $z_{G}$ is in the smooth part of the special fiber. This implies $\mathcal{Y}$ is regular at $z_{G}$, so we can use \emph{purity of the branch locus} (See \cite[Tag 0BMB]{stacks-project}) on some open subset $U$ of $\mathcal{Y}$ containing $z_{G}$ to conclude there must be a generic branch point $P$ such that $z_{G}$ is in the closure of $P$. Indeed, purity of the branch locus tells us a point of codimension 1 has to be in the branch locus and this cannot be a vertical divisor by our second assumption for disjointly branched morphisms. 
We must have $P=x_{G}$ because the branch locus is disjoint. This contradicts the fact that the morphism $\mathcal{X}/I_{x}\rightarrow{\mathcal{Y}}$ is unramified above $x_{G}$ (it is the largest extension with this property), so we conclude $\mathcal{X}/I_{x}\rightarrow{\mathcal{Y}}$ is unramified at $z_{I_{x}}$. In other words, we have $I_{z,\mathcal{X}}=I_{x,\mathcal{X}}$, as desired.  
\end{proof}

We use this result in the tropicalization algorithm to relate the inertia groups on the two-dimensional scheme $\mathcal{X}$ to inertia groups on the special fiber. This allows us to calculate inertia groups without calculating any normalizations. 
This in turn tells us how many edges and vertices there are in the pre-image of any edge $e$ or vertex $v$ in the dual graph of the special fiber of $\mathcal{Y}$. 

\subsection{From algebraic geometry to metric graphs}\label{algmetgraph}
In this section, we study the transition from the algebraic Galois coverings (as in Subsection \ref{disbransect}) to Galois coverings of metric graphs (as in Section \ref{galoiscovers}). To do this, we modify our graphs to reflect the geometry further by assigning lengths to the edges, adding weights to the vertices to account for genera, and adding leaves to account for the ramification coming from generic ramification points.

We start with the quotient map of \emph{graphs} $\Sigma(\mathcal{X})\rightarrow{\Sigma(\mathcal{Y})}$ coming from a disjointly branched morphism. 
We assign lengths to these edges by studying their local algebraic structure. The edges in $\Sigma$ correspond to ordinary double points on $\mathcal{X}$ and $\mathcal{Y}$. For any such point $x$ in $\mathcal{X}$,
$$
\widehat{\mathcal{O}}_{\mathcal{X},x}\simeq{\mathcal{O}_{K}[[u,v]]/(uv-\pi^{a})},
$$
where the completion is with respect to the maximal ideal $\mathfrak{m}_{x}$ of the local ring $\mathcal{O}_{\mathcal{X},x}$.
We define the length of the edge $l(e)$ corresponding to $x$ to be $l(e):=a$. Each vertex $v\in \Sigma$ corresponds to an irreducible component $\Gamma_v$ of genus $g(\Gamma_v)$ in the special fiber. We weight the vertices $v\in \Sigma$ by $w(v):=g(\Gamma_v)$.

We now add infinite leaf edges to the graphs $\Sigma(\mathcal{X})$ and $\Sigma(\mathcal{Y})$. We first perform a base change to make all the ramification points rational over $K$. Consider the morphism of smooth curves $\phi:X\rightarrow{Y}$. Let $P\in{X({K})}$ be a ramification point and let $Q=\phi(P)\in{Y({K})}$ be the corresponding branch point. The points $P$ and $Q$ reduce to exactly one component on $\mathcal{X}$ and $\mathcal{Y}$ respectively. We add a leaf edge $E_{P}$ to the vertex $V_{P}$ that $P$ reduces to and a leaf edge $E_{Q}$ to the vertex $V_{Q}$ that $Q$ reduces to. Doing this for every ramification point gives two loopless models $\tilde{\Sigma}(\mathcal{X})$ and $\tilde{\Sigma}(\mathcal{Y})$. There is a natural map between the two, which is induced by the map $\Sigma(\mathcal{X})\rightarrow{\Sigma(\mathcal{Y})}$ and sends leaf edges $E_{P}$ to $E_{Q}$. The integer $l'(E_{Q})/l(E_{P})$ we assign to these edges is $|I_{P}|$. There is a natural action of $G$ on this loopless model, given as follows. On $\Sigma(\mathcal{X})$, this is the usual action. For leaf edges $E_{P}$, we define an action by $\sigma(E_{P})=E_{\sigma(P)}$. This in accordance with the algebraic data, since $\sigma(P)$ reduces to $\sigma(V_{P})$. 
\begin{lemma}\label{connectram}
Let $\mathcal{X}\rightarrow{\mathcal{Y}}$ be a disjointly branched morphism as in Definition \ref{disbran} and let $\Sigma(\mathcal{X})\rightarrow{\Sigma(\mathcal{Y})}$ be the induced covering of intersection graphs from Theorem \ref{tropabeliantheorem}. For every edge $e\in\tilde{\Sigma}(\mathcal{X})$ (as defined above) corresponding to a point $x\in\mathcal{X}$, we have  $s_e=|I_{x}|$.
\end{lemma}
\begin{proof}
For edges corresponding to generic ramification points, this is by definition. For edges corresponding to intersection points, this follows from \cite[Chapter 10, Proposition 3.48]{liu2}.
\end{proof}
\begin{prop}\label{disbranprop1}
Let $\mathcal{X}\rightarrow{\mathcal{Y}}$ be a disjointly branched morphism as in Definition \ref{disbran}. The natural quotient map $\Sigma(\mathcal{X})\rightarrow{\Sigma(\mathcal{Y})}=\Sigma(\mathcal{X})/G$ from Theorem \ref{tropabeliantheorem} then yields a Galois covering of metric graphs $\tilde{\Sigma}(\mathcal{X})\rightarrow{\tilde{\Sigma}(\mathcal{Y})}$. 
\end{prop}
\begin{proof}
By construction every edge $e$ in $\tilde{\Sigma}(\mathcal{X})$ corresponds to either a generic (geometric) ramification point or an intersection point. To every point $x\in\mathcal{X}$, we then apply the Orbit-Stabilizer Theorem from group theory to obtain $|G|=|D_{P}|\cdot{\#\{\sigma(P):\sigma\in{G}\}}$. For generic ramification points and intersection points, we have  $I_{P}=D_{P}$, by our assumptions in Section \ref{disbransect}.  
We thus find the degree is just $|G|$ everywhere, so in particular it is independent of the edge $e$.

We now have to check the local Riemann-Hurwitz conditions at every vertex $v$ mapping to a vertex $w$. By Lemma \ref{connectram}, the quantities $s_e$ are just the ramification indices of the generic points reducing to $v$ and the indices of the edges. But these indices correspond with the indices on the special fiber by Proposition~\ref{ramind2}. Furthermore, the ramification points of the morphism on the special fiber $\Gamma\rightarrow{\Gamma'}$ (corresponding to $v\mapsto{w}$) all arise from either the closure of a generic ramification point (using purity of the branch locus as before) or as an intersection point of $\mathcal{X}$. These are all accounted for, so the Riemann-Hurwitz conditions must be satisfied.
\end{proof}

\begin{rem}\label{Admissiblecovering}
The notion of a disjointly branched morphism can be linked to that of an {\it{admissible covering}} (see \cite{admcov}) as follows. First, we note that the notion introduced in \cite[Definition 8]{admcov} should be a relative notion. Let $\mathcal{X}/\mathrm{Spec}(R)$ and $\mathcal{Y}/\mathrm{Spec}(R)$ be proper $n$-pointed semistable curves over the discrete valuation ring $R$, as defined in \cite[Definition 1.6.1]{Temkin2018}. We then define a {\it{relative admissible covering}} $\phi:\mathcal{X}\rightarrow{\mathcal{Y}}$ to be a finite morphism over $\mathrm{Spec}(R)$ such that the four conditions of \cite[Definition 8]{admcov} hold.

Suppose now that $\phi:\mathcal{X}\rightarrow{\mathcal{Y}}$ is a disjointly branched morphism. We mark these curves by the closures of the ramification and branch points respectively. By Proposition \ref{ramind2} and the second condition of a disjointly branched morphism, one sees that that $\phi$ is \'{e}tale outside the marked points and the singular points. We then immediately see that the first three conditions in \cite[Definition 8]{admcov} hold. For the last condition, one uses \cite[Lemma 5.5.5]{tropabelian} to see that the inertia groups are cyclic, after which the desired form in the fourth condition follows by a simple calculation on normalizations of local rings. We conclude that $\phi$ is a relatively admissible covering over $\mathrm{Spec}(R)$. 

The reason we chose to work with disjointly branched morphisms is as follows. This notion gives a concrete way of going from (Galois) covers of algebraic curves over a field to covers of strongly semistable models over a discrete valuation ring. Indeed, let $X\rightarrow{Y}$ be such a covering and let $\mathcal{Y}$ be any strongly semistable model. By a sequence of blow-ups, one then obtains a strongly semistable $\mathcal{Y}'$ in which the closure of the branch locus consists of smooth, disjoint sections over $R$. The normalization of $\mathcal{Y}'$ in the function field $K(X)$ might then be ramified at the codimension one primes in the special fiber. After a finite totally ramified extension $K\subset{K'}$ of the base field, this ramification disappears and the normalization of $\mathcal{Y}'\times{\mathrm{Spec}(R')}$ in the function field $K'(X)$ is then strongly semistable. Furthermore, the corresponding induced morphism $\mathcal{X}'\rightarrow{\mathcal{Y}'\times{\mathrm{Spec}(R')}}$ is a disjointly branched morphism.

We see that disjointly branched morphisms give a way of moving from tame algebraic coverings of curves over a field to tame coverings of (strongly) semistable models and thus to admissible coverings. In general, this step of moving from algebraic coverings to coverings of semistable models is quite hard due to issues of wild ramification. Understanding the combinatorics of this for superelliptic coverings would be an interesting direction of future research.

\end{rem}

\section{Tropicalization Algorithm}
\label{tropalgo}

In this section, we provide the algorithm producing the Berkovich skeleton for a superelliptic curve $X$ with superelliptic covering $X\rightarrow{\mathbb{P}^{1}}$. To do this, we first provide a semistable model $\mathcal{Y}$ of $\mathbb{P}^{1}$ separating the branch locus over $R$. The idea is to use Proposition \ref{ramind2} to reduce all the calculations to one dimensional schemes.
\subsection{A separating semistable model}\label{sepmodel}
We describe a semistable model $\mathcal{Y}$ of $\mathbb{P}^1$ separating the branch locus $B$ of $\phi:X\rightarrow{\mathbb{P}^{1}}$. 
We start with the model $\mathcal{Y}_{0}:=\text{Proj}R[X,Y]$ where $R[x,y]$ has the usual grading. The reader can think of this as being obtained from gluing together the rings $R[x]$ and $R[1/x]$. 
We now have a canonical reduction map $r_{\mathcal{Y}_{0}}$, which takes a closed point $P\in{\mathbb{P}_{K}^{1}}$ and sends it to the unique point in the closure of $P$ lying over the special fiber $(\mathcal{Y}_{0})_{s}=\mathbb{P}_{k}^{1}$ as in \cite[Section 10.1.3, Page 467]{liu2} or Section \ref{compinert}.
Informally, this map is given as reducing modulo the maximal ideal of $R$. This reduction map depends on the choice of the model $\mathcal{Y}_{0}$.

We now use this reduction map on the branch points $B$ to obtain a collection of points in the special fiber. We group together all points having the same reduction under this reduction map. This provides a subdivision of $B$ into subsets $B_{i}$. We consider the subsets with $|B_{i}|>1$. For these subsets with their corresponding reduced points $p_{i}$ we now blow-up the model $\mathcal{Y}_{0}$ in the points $p_{i}$. This gives a new model $\mathcal{Y}_{1}$. 
On this new model $\mathcal{Y}_{1}$, we again have a canonical reduction map $r_{\mathcal{Y}_{1}}$ and similarly consider the image of every subset $B_{i}$ under this reduction map to obtain a new subdivision $B_{i,j}$. For every two points $P_{1}$ and $P_{2}$ in $B$, we have they are in the same $B_{i,j}$ if and only if their reductions in $\mathcal{Y}_{1}$ are the same. This gives a new set of points $p_{i,j}$ (the reduction of the points in $B_{i,j}$) in the special fiber of $\mathcal{Y}_{1}$. We consider the points $p_{i,j}$ such that $|B_{i,j}|>1$. Blowing up these points $p_{i,j}$ gives a new model $\mathcal{Y}_{2}$.

 This process terminates: at some point all the $B_{i_{0},i_{1},...,i_{k}}$ have cardinality $1$, since the coordinates of the points on the special fiber of the blow-up are exactly the coefficients of the $\pi$-adic expansions of those points. 
The $\pi$-adic expansions of distinct $P_{i}$ and $P_{j}$ are different after a certain height $k$, giving different coordinates on the corresponding blow-up.
The last semistable model $\mathcal{Y}_{k}$ before the process above terminates is our separating semistable model. We simply call this model $\mathcal{Y}$. 

\subsection{Ramification indices for superelliptic coverings}
Let $A$ be a discrete valuation ring with 
maximal ideal $\mathfrak{m}_{A}=(\pi_{A})$, valuation $v_{A}$, field of fractions $K(A)$ and residue field $k(\mathfrak{m}_{A})$. Here we assume that $n$ is coprime to the characteristic of the residue field and that $A$ contains a primitive $n$-th root of unity. The discrete valuation rings in this section are the local rings of closed points on a curve and generic points of irreducible components of some semistable model $\mathcal{Y}$. 
\begin{exa}\label{RationalPoint}
Let $X\rightarrow{\mathbb{P}^{1}}$ be a superelliptic covering of degree $n$ over $K$ given by $(x,y)\mapsto{x}$ for the curve $X$ defined by
$
y^{n}=f(x),
$
where we can assume $f(x)$ is a polynomial in $K[x]$. For every nongeneric point $P_{\alpha}\in\mathbb{P}^{1}$, the local ring $\mathcal{O}_{\mathbb{P}^{1},P_{\alpha}}$ is a discrete valuation ring in the function field $K(x)$ and we write $v_{\alpha}$ for the corresponding valuation. 
If $P_{\alpha}$ is a $K$-rational point not equal to infinity, then this valuation can be calculated as follows. The point $P_{\alpha}$ corresponds to a maximal ideal of the form $(x-\alpha)$ for some $\alpha\in{K}$. Every $h\in{K(x)}$ can then be written as 
$$
h=(x-\alpha)^{k}\dfrac{g_{1}}{g_{2}},
$$
where $k\in\mathbb{Z}$ and the $g_{i}$ are polynomials in $K[x]$ such that $(x-\alpha)\nmid{g_{i}}$. We then have $v_{\alpha}(h)=k$. 

If $P_{\alpha}=\infty$, we proceed as follows. We write any $f$ as a quotient $f=g_{1}/g_{2}$ of two polynomials $g_{1},g_{2}$. We then have
$$
v_{\alpha}(f)=\mathrm{deg}(g_{2})-\mathrm{deg}(g_{1}).
$$
See \cite{Sticht2009} for more background on these valuations of function fields.
\end{exa}
We now consider an abelian extension $L$ of $K(A)$ of degree $n$. By Kummer theory, this extension can be represented as $$L=K(A)[y]/(y^{n}-f)$$ for some $f\in{K(A)}$.
We now give a criterion that allows us to calculate the orders of the inertia and decomposition groups for these extensions.  
We will use this criterion in Algorithm \ref{tropalgor} to determine the decomposition groups of the edges and vertices. 
\begin{lemma}
\label{SplittingLemma1} Let $A$ be a discrete valuation ring with
maximal ideal $\mathfrak{m}_{A}=(\pi_{A})$, field of fractions $K(A)$, valuation $v_{A}$ and residue field $k(\mathfrak{m}_{A})$. Assume that $n$ is coprime to the characteristic of the residue field. Let $L:=K(A)[y]/(y^{n}-f)$ be a finite abelian extension of $K(A)$ of degree $n$ with $f\in{K(A)}$ and write $f=\pi^{k}_{A}u$, where $k=v_{A}(f)$ and $u$ is a unit in $A$. 
Then the following are true.

\begin{enumerate}
    \item Let $\mathfrak{m}_{B}$ be a maximal ideal lying above $\mathfrak{m}_{A}$.  
Then $$|I_{\mathfrak{m}_{B}}|=\dfrac{n}{\mathrm{gcd}(n,k)}.
$$
\item Suppose that $k$ is divisible by $n$ so that the extension $\mathfrak{m}_{B}\supset{\mathfrak{m}_{A}}$ is unramified. 
Let $r$ be the largest divisor of $n$ such that $\overline{u}=s^{r}$ for some $s\in{k(\mathfrak{m}_{A})}$. Then
$$
|D_{\mathfrak{m}_{B}}|=\dfrac{n}{r}.
$$ 
\end{enumerate}

\end{lemma}

\begin{proof}
We write $w_{B}$ for the valuation corresponding to $\mathfrak{m}_{B}$. To calculate the order of the inertia and decomposition groups, we can work over the completion $\hat{K}_{\mathfrak{m}_{A}}$ of $K$ with respect to $v_{A}$. Indeed, the order of the inertia group is the ramification index $e_{\mathfrak{m}_{B}/\mathfrak{m}_{A}}$, which is equal to the index $[w_{B}(\hat{L}_{\mathfrak{m}_{B}}^{*}):v_{A}(\hat{K}^{*}_{\mathfrak{m}_{A}})]$ of value groups over the completion. The order of the decomposition group in turn is equal to the product $e_{\mathfrak{m}_{B}/\mathfrak{m}_{A}}\cdot{}f_{\mathfrak{m}_{B}/\mathfrak{m}_{A}}$, where $f_{\mathfrak{m}_{B}/\mathfrak{m}_{A}}$ is the degree of the residue field extension corresponding to $\hat{K}_{\mathfrak{m}_{A}}\subset{}\hat{L}_{\mathfrak{m}_{B}}$. Note that the extension $\hat{K}_{\mathfrak{m}_{A}}\subset{\hat{L}_{\mathfrak{m}_{B}}}$ is given by adjoining a root $\gamma$ of $y^{n}-f$ to $\hat{K}_{\mathfrak{m}_{A}}$, see \cite[Section 3]{Ste2}. 
We will write $K$ and $L$ for $\hat{K}_{\mathfrak{m}_{A}}$ and $\hat{L}_{\mathfrak{m}_{B}}$ respectively. Let $n':=n/\mathrm{gcd}(n,k)$ and $k':=k/\mathrm{gcd}(n,k)$. We write $\alpha:=\pi_{A}^{1/n'}$ for an $n'$-th root of $\pi_{A}$ in the algebraic closure of $K$. The extension 
\begin{equation}
    K\subset{K(\alpha)}=:K'
\end{equation}
is then totally ramified of degree $n'$. 
We claim that $\gamma$ is in the maximal unramified extension 
$K'^{\mathrm{unr}}$ of $K'$. Indeed, consider the element
\begin{equation}
    \gamma':=\dfrac{\gamma}{\alpha^{k'}}.
\end{equation}
Then $\gamma'^{n}=u$. But the polynomial $y^{n}-u$ defines an unramified extension of $K$ (and thus of $K'$) by our assumption on $n$, so $\gamma'$ is defined over $K^{\mathrm{unr}}\subset{K'^{\mathrm{unr}}}$. This then 
implies that $\gamma=\gamma'\cdot{\alpha^{k'}}$ 
is defined over the maximal unramified extension of $K'$, as desired. We now have the chain of finite extensions 
\begin{equation}
    K^{\mathrm{unr}}\subset{K^{\mathrm{unr}}(\gamma)}\subset{K^{\mathrm{unr}}(\alpha)}.
\end{equation}
Since $\mathrm{gcd}(n',k')=1$, we can find integers $h_{i}$ such that $h_{1}k'+h_{2}n'=1$. Since $\alpha^{k'}\in{K^{\mathrm{unr}}(\gamma)}$, we find  $\alpha/\pi_{A}^{h_{2}}=\alpha^{h_{1}k'}\in{K^{\mathrm{unr}}(\gamma)}$ and thus $\alpha\in{K^{\mathrm{unr}}(\gamma)}$. The equality $K^{\mathrm{unr}}(\gamma)={K^{\mathrm{unr}}(\alpha)}$ then directly implies the formula for the inertia group.  

For the decomposition group formula, let $\gamma$ be as above. By considering $\gamma/\pi_{A}^{h}$ instead of $\gamma$ for a suitable integer $h$, we can assume that $k=0$. We have to find the degree of the field extension $K\subset{K(\gamma)}$. Let $\beta\in{K}$ be an element that reduces to $s$. We claim that 
\begin{equation}
    y^{n/r}-\beta
\end{equation}
is irreducible over $K$. To see this, it suffices to show that the corresponding polynomial is irreducible over $k(\mathfrak{m}_{A})$. Suppose that it is reducible. Using our assumption that $k(\mathfrak{m}_{A})$ contains a square root of $-1$ if $n$ is divisible by $4$ together with \cite[Theorem 1.6]{karpilovsky1989topics}, we find thatthere exists a prime $q$ dividing $n/r$ with $s\in{k(\mathfrak{m}_{A})^{q}}$. But then $r$ is not the largest divisor of $n$ with $s^{r}=\overline{u}$, a contradiction. We conclude that $y^{n/r}-\beta$ is irreducible over $K$. We write $\gamma'$ for a root of this polynomial over an algebraic closure of $K$. We then have that 
\begin{equation}
    (\gamma/\gamma')^{n}=u'
\end{equation}
for a unit $u'$ in $A$ with reduction $1$. By Hensel's lemma, we thus have that $K(\gamma)=K(\gamma')$, which then also yields the formula for the decomposition group, as desired.   
\end{proof}

Let $\phi:X\rightarrow{\mathbb{P}^{1}}$ be a superelliptic covering and consider the canonical model $\mathcal{Y}$ constructed in Section \ref{sepmodel}. We do not need to write the equations for this model, and we may instead work with the intersection graph, which is the tropical separating tree of these points minus the leaves at the end. For this canonical model $\mathcal{Y}$, we take the corresponding disjointly branched morphism $\mathcal{X}\rightarrow{\mathcal{Y}}$ obtained by normalization after a finite extension. That is, we take a finite extension of $K$ to eliminate the ramification on the components of the special fiber of $\mathcal{Y}$ and then we take the normalization $\mathcal{X}$ of $\mathcal{Y}$ inside the function field $K(X)$ of $X$. By \cite[Theorem 2.3]{liu_lorenzini_1999} and \cite[Proposition 4.1.1]{tropabelian}, the morphism $\phi_{\mathcal{X}}:\mathcal{X}\rightarrow{\mathcal{Y}}$ is then disjointly branched, as defined in Definition \ref{disbran}. We use this disjointly branched morphism $\phi_{\mathcal{X}}$ throughout this section. 

\begin{prop}
\label{edgeprop}
Let $P_{\alpha}\in\mathbb{P}^{1}({K})$ be a (generic) branch point of the superelliptic covering $\phi:X\rightarrow{\mathbb{P}^{1}}$ 
given by the equation $y^n=f(x)$ with a corresponding superelliptic morphism of metric graphs $\phi_{\Sigma}:X_{\Sigma}\rightarrow{T}$ induced from disjointly branched morphism  $\mathcal{X}\rightarrow{\mathcal{Y}}$. Let $c_{\alpha}:=v_{\alpha}(f(x))$, where $v_{\alpha}$ is the valuation associated to $P_{\alpha}$ in the function field ${K}(x)$. For any point $x\in{\mathcal{X}}$, we let $I_{x}$ and $D_{x}$ be the inertia group and decomposition group of $x$, as defined in Section \ref{galoiscovers2}.
\begin{enumerate}
\item Let $Q_{\alpha}$ be any point in the preimage of $P_{\alpha}$, and let $\tilde{Q}_{\alpha}:=r_{\mathcal{X}}(Q_{\alpha})$. 
Then
\begin{equation*}
|I_{Q_{\alpha}}|=n/\gcd(c_{{\alpha}},n)=|I_{\tilde{Q}_{\alpha}}|.
\end{equation*}
\item \label{edgeprop2}Let $\psi$ be a rational function on $T$ satisfying $\Delta(\psi) = \rho(\text{div}(f))$. Let $\psi_e$ be the slope of $\psi$ along the edge $e$ of $T$. Let $e'$ be any edge lying above $e$. For any edge $e'$ lying above $e$,
\begin{equation*}
|I_{e'}|=n/\gcd(\psi_{e},n).
\end{equation*}
In other words, there are $\text{gcd}(\psi_{e},n)$ edges lying above $e$. 
\item \label{edgeprop3} Let $g_v$ be the number of vertices in $\Sigma$ lying above $v \in T$ and let $m_{e}=\mathrm{gcd}(\psi_{e},n)$. Then

\begin{equation*}
g_{v}={\mathrm{gcd}(m_{e})}=\mathrm{lcm}(n/m_{e}),
\end{equation*}
where the least common multiple and greatest common divisor are taken over edges $e$ adjacent to $v$.

\end{enumerate}
\end{prop}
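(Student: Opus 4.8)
The plan is to establish the three parts in order, using Proposition~\ref{ramind2} to transfer everything to the special fiber and reduce the computation of inertia groups to elementary local computations with Newton polygons and the slope lemma (Lemma~\ref{slopelemma}).

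For part (1), I would start from the generic branch point $P$. The local ring of $\mathbb{P}^1_K$ at $P$ is the discrete valuation ring with uniformizer $x-\alpha$ (after translating $P$ to a root $\alpha$, or using $1/x$ if $P=\infty$), and the extension $K(C)/K(x)$ localized here is governed by the equation $y^n=f(x)$. The preceding discussion of Newton polygons already shows $v_P(y) = c_P/n$ in the extension, so the ramification index of $P$ in $K(C)/K(x)$ equals $n/\gcd(c_P,n)$: writing $c_P = d\cdot m$ and $n = d\cdot k$ with $d=\gcd(c_P,n)$, the element $y^k/(x-\alpha)^m$ is a unit times an $n/d$-th root of a unit, and one checks $v(y)$ generates $\tfrac{1}{n/d}\mathbb{Z}$ in the value group. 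Since $x$ is totally ramified in $L \supset L^{I_Q}$ and $|I_Q|$ equals this ramification degree (Example 3, item 3), we get $|I_Q| = n/\gcd(c_P,n)$. The equality $|I_Q| = |I_{\tilde Q}|$ is then exactly Proposition~\ref{ramind2} applied to the generic ramification point $Q$ and the component $\Gamma$ containing $\tilde Q = r_{\mathcal{C}}(Q)$.

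For part (2), the key identity is $\Delta(\psi) = \rho(\mathrm{div}(f))$, where $\rho$ denotes the reduction/retraction sending each branch point $P$ to the vertex $v_P \in T$ it reduces to (with multiplicity $c_P$), so that $\Delta(\psi)(v) = \sum_{P \mapsto v} c_P$ plus contributions from leaves. By Lemma~\ref{slopelemma}, for an edge $e$ of $T$ with the component $T_e$ cut off away from the root, the slope $\psi_e$ equals $\sum_{x \in T_e}\Delta(\psi)(x) = \sum_{P \text{ retracting into } T_e} c_P$. On the other hand, the component $\Gamma_{y}$ of $\mathcal{C}_s$ corresponding to a vertex $y$ of $\Sigma$ above one endpoint of an edge $e'$ over $e$ carries the covering $\Gamma_y \to \Gamma_{v}$, and the inertia group $I_{e'}$ of the intersection point is computed on the special fiber by Proposition~\ref{ramind2}; the valuation of $y$ along the branch $e'$ is, by the same telescoping argument used in Lemma~\ref{slopelemma} applied now on $\mathcal{C}$ (or via \cite[Chapter 10, Proposition 3.48]{liu} as in Lemma~\ref{connectram}), governed by $\psi_e \bmod n$. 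Concretely $|I_{e'}| = n/\gcd(\psi_e, n)$, and then the Orbit–Stabilizer count (as in the proof of the previous proposition) gives $|G|/|I_{e'}| = n/(n/\gcd(\psi_e,n)) = \gcd(\psi_e,n)$ edges over $e$.

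For part (3), the number $g_v$ of vertices of $\Sigma$ over $v \in T$ is again an orbit count: $g_v = |G|/|D_v|$ where $D_v$ is the stabilizer of a vertex $V$ over $v$, i.e. the decomposition group of the corresponding component $\Gamma_V$, which by our residue-field assumption equals its inertia group. The point is that this stabilizer is generated by the inertia groups of all the edges and leaves incident to $\Gamma_V$ — geometrically, an automorphism fixing $\Gamma_V$ is determined by its action on the function field $k(\Gamma_V)$, and the Galois group of $k(\Gamma_V)/k(\Gamma_v)$ is the cyclic group generated by the various local inertia contributions, so its order is $\mathrm{lcm}$ of the $|I_Q|$ over ramification points and edges adjacent to $V$ (and to any $v'$ over $v$, since all such $V$ are in one $G$-orbit). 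Hence $g_v = n/\mathrm{lcm}(|I_Q|)$, and since each $|I_Q|$ divides $n$, this equals $\gcd(n/|I_Q|)$.

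The main obstacle I expect is part (3): one must argue carefully that the decomposition group of a component is exactly the subgroup generated by the inertia groups of incident edges and leaves — equivalently, that the cyclic cover $\Gamma_V \to \Gamma_v$ of the line is ramified precisely at the images of the branch points retracting to $V$ and at the nodes, with ramification indices as computed, and has no "hidden" extra automorphisms. This is where purity of the branch locus and the disjointly-branched hypothesis (every branch point on $\mathcal{D}_s$ lies in the smooth locus and comes from the closure of a generic branch point) are essential, exactly as in the proof of Proposition~\ref{ramind2} and the subsequent proposition; I would invoke those results rather than redo the argument. The Newton-polygon computation in part (1) and the slope bookkeeping in part (2) are routine once the transfer to the special fiber via Proposition~\ref{ramind2} is in hand.
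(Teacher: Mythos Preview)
Your Part (1) matches the paper's argument essentially line for line: Newton polygon of $y^n-f(x)$ gives ramification index $n/\gcd(c_P,n)$, and Proposition~\ref{ramind2} transfers this to $\tilde Q$.

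In Part (2) you have the right shape but you misidentify the key tool. The citation to \cite[Chapter 10, Proposition 3.48]{liu} and Lemma~\ref{connectram} is off---that result compares edge \emph{lengths} $l(e')/l(e)$, not the valuation of $f$ at the node. What the paper actually does is introduce the $\Gamma$-modified form $f^{\Gamma}=f/\pi^{v_{\Gamma}(f)}$, so that the cover of components on the special fiber is given by $(y')^n=\overline{f^{\Gamma}}$, and then invokes the Poincar\'e--Lelong formula \cite[Corollary 5.1]{tropabelian} to identify the order of vanishing of $\overline{f^{\Gamma}}$ at the node with the slope $\psi_e$. After that, the same Newton-polygon computation as in Part (1) gives $|I_{e'}|=n/\gcd(\psi_e,n)$. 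Your ``telescoping'' sentence is groping toward this, but Poincar\'e--Lelong is the precise statement you need, and the $\Gamma$-modified form is how you set it up.

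Part (3) is where you genuinely diverge from the paper. The paper does not use orbit--stabilizer at all here: it works directly with the algebra $\mathcal{O}_{\mathcal{D},z}[y]/(y^n-f^{\Gamma})$, writes $\overline{f^{\Gamma}}=\overline{h}^{r}$ with $r=\gcd(n/|I_Q|)$, and exhibits the explicit factorization $\overline{y^n-f^{\Gamma}}=\prod_{i=1}^{r}(\overline{y^{n/r}-\zeta_r^{i}h})$, then checks irreducibility of each factor by noting two roots of $h$ have coprime multiplicities. Your approach---$g_v=|G|/|D_V|$, with $D_V$ the cyclic group generated by the inertia subgroups at adjacent edges and leaves---is correct, and the ``obstacle'' you flag is exactly the point: you need that $\Gamma_v\cong\mathbb{P}^1_k$ has no nontrivial connected \'etale covers, so the subgroup generated by the inertia groups is all of $D_V$, hence $|D_V|=\mathrm{lcm}(|I_Q|)$. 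That is a clean argument and arguably more conceptual than the paper's explicit factorization; the paper's route, on the other hand, is self-contained and avoids invoking simple-connectedness of $\mathbb{P}^1$.
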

\begin{proof} $\ $

\begin{enumerate}
\item 
We have $|I_{Q_{\alpha}}|=\dfrac{n}{\mathrm{gcd}(c_{{\alpha}},n)}$ by applying the second part of Lemma \ref{SplittingLemma1} to the extension $\mathcal{O}_{\mathbb{P}^{1},P_{\alpha}}\rightarrow{\mathcal{O}_{X,Q_{\alpha}}}$ of discrete valuation rings arising from $X\rightarrow{\mathbb{P}^{1}}$. Using Proposition \ref{ramind2}, we conclude that $|I_{Q_{\alpha}}|=|I_{\tilde{Q}_{\alpha}}|$ .

\item For the second statement, pick any vertex $v$ with corresponding irreducible component $\Gamma$ containing the edge $e$. We consider the $\Gamma$-modified form of $f$, defined as follows. The component $\Gamma$ has a generic point $y$ with discrete valuation ring $\mathcal{O}_{\mathcal{Y},y}$, valuation $v_{\Gamma}$, and uniformizer $\pi$. We set $k:=v_{\Gamma}(f)$ and define the $\Gamma$-modified form to be the element
\begin{equation*}
f^{\Gamma}:=\dfrac{f}{\pi^{k}}.
\end{equation*}
The corresponding morphism of components is described by $(y')^{n}-f^{\Gamma}$, where $y'=\dfrac{y}{\pi^{k/n}}$. On the special fiber, the intersection point corresponds to a smooth point of $\Gamma$. 
By the Poincar\'{e}-Lelong formula, as presented in \cite[Theorem 3.3.2.]{tropabelian}, 
the valuation of $f^{\Gamma}$ at this smooth point is exactly the slope of the function $\psi$ on $e$.
As in the previous statement, the ramification index on the special fiber is $n/\text{gcd}(\psi_{e},n)$. By Proposition \ref{ramind2}, this is the order of the inertia group at $e'$, as desired.
\item For the third statement, we will apply Lemma \ref{SplittingLemma1}, part (iii) to the extension of discrete valuation rings 
\begin{equation}
\mathcal{O}_{\mathcal{Y},y_{v}}\rightarrow{\mathcal{O}_{\mathcal{X},y_{v'}}},
\end{equation}
where $y_{v}$ and $y_{v'}$ are the generic points corresponding to the vertices $v$ and $v'$ respectively. Note that the $\Gamma$-modified form defined in the previous part is exactly the unit $u$ studied in Lemma \ref{SplittingLemma1}. We thus have to find the largest $r$ such that $\overline{f^{\Gamma}}=s^{r}$ for some $s\in{k(y_{v})}=k(\mathbb{P}^{1}_{k})$. Here we identified the residue field of $y_{v}$ with the function field of the projective line over $k$. Since $\mathbb{P}^{1}_{k}$ has trivial Picard group, a rational function $h$ is a $j$-th power of some other rational function if and only if the valuations of $h$ at all points of $\mathbb{P}^{1}$ are divisible by $j$. We thus have to find the greatest common divisor of the valuations of $\overline{f^{\Gamma}}$ and $n$. The points that nontrivially contribute to the valuations of $\overline{f^{\Gamma}}$ correspond exactly to the edges adjacent to $v$. Since these valuations are equal to the slope $\psi_{e}$ of $\psi$ along $e$, we see that we obtain the statement of the proposition.
\end{enumerate}
\end{proof}

\subsection{The algorithm}
We now give an algorithm producing the Berkovich skeleton of a curve $X$ defined by an equation $y^{n}=f(x)$ for some $n\geq{2}$ and $f(x)\in{K(x)}$.
This algorithm generalizes the known algorithm for finding the Berkovich skeleton of hyperelliptic curves from \cite[Section 2]{bbc}.
We take as input to the algorithm a polynomial $f(x) \in K[x]$, which we may do because for $f(x)$ of the form $f(x)=g(x)/h(x)$, we may multiply both sides of $y^n=f(x)$ by $h(x)^{n}$ and make a change of coordinates $\tilde{y}=h(x)\cdot{y}$ to obtain the integral equation $(\tilde{y})^n=g(x)h(x)^{n-1}$.

\begin{algo}[Tropicalization Algorithm] $\ $
\label{tropalgor} 
$\ $\\

\textit{Input}: A curve $X$ defined by the equation $y^n = f(x)$ over a field with a valuation $v$.

\textit{Output}: The Berkovich skeleton $X_\Sigma$ of $X$.
\begin{enumerate}
\item \textit{Compute finite expansions of the roots of $f$.}
If $K$ is the field of Laurent series over $\mathbb{C}$, then the Newton-Puiseux Method \cite{walker} can compute finite expansions for the roots of the polynomial $f$. An explicit upper bound for the needed height of this expansion is given by $v(\Delta(f))$, where $\Delta(f)$ is the discriminant of $f$. Typically, this method is offered as a proof that the field of Puiseux series is algebraically closed, but it can also be used to actually find the roots of univariate polynomials over the Puiseux series. This method has been implemented in \texttt{Maple}\cite[\texttt{algcurves}]{Maple} and \texttt{Magma} \cite{magma}.
\item \textit{Compute the tree $T$}. This is the abstract tropicalization of $\mathbb{P}^1$ together with the marked ramification points $Q_1, \ldots, Q_s$. This is done in the following way (See \cite[Section 4.3]{tropicalbook}).
\begin{enumerate}
\item Let $M$ be the $2 \times s$ matrix whose columns are the branch points $Q_1, \ldots, Q_s$. Let $m_{ij}$ be the $(i,j)$-th minor of this matrix.
\item Let $d_{ij} = N- 2v(m_{ij})$, where $v$ is the valuation on $K$ and $N$ is an integer such that $d_{ij}\geq 0$. 
\item The number $d_{ij}$ is the distance between leaf $i$ and leaf $j$ in the tree $T$. These distances uniquely specify the tree $T$, and one can use the Neighbor Joining Algorithm \cite[Algorithm 2.41]{PS05} to reconstruct the tree $T$ from these distances.
\end{enumerate}
\item \textit{Compute the slopes $\psi_e$ along each edge of $T$}. The divisor $\rho(\text{div}(f))$ is a principal divisor on $T$, and so there exists a rational function $\psi$ on $T$ with $\Delta(\psi) = \rho(\text{div}(f))$ (as defined in \cite[Page 4]{baker}). One can compute $\rho(\text{div}(f))$ by observing where the zeros and poles of $f$ specialize. Use this to compute the slopes $\psi_e$ of $\psi$ along edges $e$ of $T$ using Lemma \ref{slopelemma}.
\item \textit{Compute the intersection graph of $\mathcal{X}_s$.}
\begin{enumerate}
\item \textit{Edges.} The number of preimages of each edge is $\text{gcd}(\psi_e,n)$ by Proposition \ref{edgeprop}.\ref{edgeprop2}. 
\item \textit{Vertices.} The number of preimages of each vertex $v$ is $\text{gcd}( n,\psi_e |e \ni v)$ by Proposition~\ref{edgeprop}.\ref{edgeprop3}.
\end{enumerate}
\item \textit{Determine the edge lengths and vertex weights to find $X_\Sigma$}. 
\begin{enumerate}
\item \textit{Edges}. If an edge $e$ in $T$ has length $l(e)$, then the length of each of its preimages in $X_\Sigma$ is $\frac{l(e)\cdot \text{gcd}(\psi_e,n)}{n}$, by Proposition \ref{edgeprop} and \cite[Chapter 10, Proposition 3.48, Page 526]{liu2}. Remove any infinite leaf edges.
\item \textit{Vertices}. The weight on each vertex $v'$ is determined by the local Riemann-Hurwitz formula. The degree $d$ at $v'$ is $n/g_{v}$, where $g_{v}$ is the number of vertices above $v$. Here we use the fact that the order of the decomposition group of the generic point corresponding to a vertex lying above $v$ is equal to the degree.  
The weight of $v'$ is determined by
\begin{equation*}
2w(v')-2=-2\cdot{d}+\sum_{e'\ni v'}\left( \frac{n}{\text{gcd}(n,\psi_e)} -1\right).
\end{equation*}
\end{enumerate}
\end{enumerate}
\end{algo}

\begin{theorem}[Tropicalization Algorithm]
\label{tropalgothm}
The Tropicalization Algorithm \ref{tropalgor} terminates and is correct.
\end{theorem}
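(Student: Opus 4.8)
The plan is to show that the weighted metric graph returned by Algorithm~\ref{tropalgor} is exactly $\tilde{\Sigma}(\mathcal{C})$ with its infinite leaves deleted, where $\phi_{\mathcal{C}}:\mathcal{C}\rightarrow{\mathcal{D}}$ is the disjointly branched morphism attached to the separating semistable model $\mathcal{D}$ of Subsection~\ref{sepmodel} after a finite base change and normalization. Since $\mathcal{C}$ is then strongly semistable (by \cite[Theorem 2.3]{liu_lorenzini_1999} and \cite[Proposition 3.1]{tropabelian}), its intersection graph, metrized by the node thicknesses as in Subsection~\ref{algmetgraph}, is a skeleton of $C^{\mathrm{an}}$ (cf.\ \cite[Theorem 3.44, Chapter 9]{liu2} and the Introduction); minimality of the separating construction then identifies it with the Berkovich skeleton $C_\Sigma$. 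So the theorem reduces to checking, step by step, that Algorithm~\ref{tropalgor} computes the combinatorics, edge lengths, and vertex weights of $\tilde{\Sigma}(\mathcal{C})$.

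For \textbf{termination}, Steps 2--4 are manifestly finite computations on the finite tree $T$: a retraction of the degree-zero divisor $\operatorname{div}(f)$, the triangular linear system of Lemma~\ref{slopelemma}, finitely many $\gcd$'s and $\operatorname{lcm}$'s, and one local Riemann--Hurwitz equation per vertex. Step 1 terminates because the blow-up process of Subsection~\ref{sepmodel} halts (the $\pi$-adic digits of distinct roots $\alpha_i$ eventually differ, so some $B_{i_0,\dots,i_k}$ are singletons) and the Neighbor Joining Algorithm \cite[Algorithm 2.41]{PS05} reconstructs a tree from a finite distance matrix in finitely many steps; the only a priori infinite input, the Puiseux expansions of the $\alpha_i$ needed to evaluate the $v(m_{ij})$, can be truncated at height $v(\Delta(f))$ as noted in the Remark.

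For \textbf{correctness} I would verify the four steps in turn. \emph{Step 1:} by \cite[Section~4.3]{tropicalbook} the numbers $d_{ij}=N-2v(m_{ij})$ are the pairwise leaf distances of the abstract tropicalization of $\mathbb{P}^1$ marked at the branch points, so Neighbor Joining reconstructs precisely the metrized, marked tree $\tilde{\Sigma}(\mathcal{D})$ of the separating model. \emph{Step 2:} $\operatorname{div}(f)$ has degree $0$ on $\mathbb{P}^1$, hence $\rho(\operatorname{div}(f))$ is a degree-zero — therefore principal — divisor on the tree $T$, and Lemma~\ref{slopelemma} computes its slopes $\psi_e$; by the Poincaré--Lelong formula used in the proof of Proposition~\ref{edgeprop}.\ref{edgeprop2}, each $\psi_e$ is the valuation of the $\Gamma$-modified form $f^{\Gamma}$ at the node of $\Gamma$ corresponding to $e$, so $\psi$ carries exactly the ramification data. \emph{Step 3:} the covering $\Sigma(\mathcal{C})\rightarrow\Sigma(\mathcal{D})$ is the quotient by $G=\mathbb{Z}/n\mathbb{Z}$ (\cite[Lemma 4 and Theorem 3.1]{tropabelian}), so by Orbit--Stabilizer the number of edges over $e$ is $|G|/|I_{e'}|=\gcd(\psi_e,n)$ and the number of vertices over $v$ is $n/\operatorname{lcm}(|I_Q|)$, where Proposition~\ref{edgeprop}.\ref{edgeprop2} and Proposition~\ref{edgeprop}.\ref{edgeprop3} compute $|I_{e'}|$ and $|I_Q|$ from $\psi$, using Proposition~\ref{ramind2} to transfer inertia groups from $\mathcal{C}$ to the special fiber. \emph{Step 4:} a preimage $e'$ of $e$ has length $l(e)/|I_{e'}|=l(e)\gcd(\psi_e,n)/n$ by Lemma~\ref{connectram} and \cite[Chapter 10, Proposition 3.48]{liu}, and the weights are forced by the local Riemann--Hurwitz identity $2w(v)-2=-2d+\sum_{e\ni v}\bigl(n/\gcd(n,\psi_e)-1\bigr)$ with $d=\operatorname{lcm}(|I_Q|)$, valid because the morphism is a Galois covering of metric graphs by the main proposition of Subsection~\ref{algmetgraph}. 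Assembling these, the output is $\tilde{\Sigma}(\mathcal{C})$ with its infinite leaves removed.

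The step I expect to be the main obstacle is reconciling the purely tropical description of $T$ in Step 1 with the algebraic separating model of Subsection~\ref{sepmodel}: one must match the iterated blow-up centers $p_{i_0,\dots,i_k}$, indexed by common $\pi$-adic digits of the branch points, bijectively and with the correct lengths to the internal vertices and edges of the phylogenetic tree reconstructed from the $d_{ij}$, and then confirm that no contractible (weight-zero, low-valence) component is introduced upstairs after deleting the infinite leaves — or else that the requisite stabilization is harmless — so that $\tilde{\Sigma}(\mathcal{C})$ is indeed the \emph{minimal} skeleton. The first point is essentially \cite[Section~4.3]{tropicalbook}; once it and the minimality of $\mathcal{D}$ are in hand, everything else is routine bookkeeping against Proposition~\ref{ramind2} and Proposition~\ref{edgeprop}.
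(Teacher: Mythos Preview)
Your approach is essentially the paper's, carried out in considerably more detail: both arguments identify $T$ with the intersection tree of the separating model $\mathcal{D}$, invoke Proposition~\ref{edgeprop} (hence Proposition~\ref{ramind2}) for the fiber cardinalities, and read off lengths and weights from \cite[Chapter 10, Proposition 3.48]{liu} and the local Riemann--Hurwitz formula.

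There is, however, one genuine ingredient in the paper's proof that your proposal does not supply. Step~3 of the algorithm outputs only \emph{local} covering data: for each edge and each vertex of $T$, the number of preimages. You then write ``Assembling these, the output is $\tilde{\Sigma}(\mathcal{C})$,'' but knowing the fiber cardinalities over every cell of $T$ does not, a priori, determine how the preimage edges are glued to the preimage vertices; different gluings can produce non-isomorphic graphs with the same local data. The paper closes this gap by observing that the gluing ambiguity is governed by a \v{C}ech $2$-cocycle on $T$ with values in the cyclic group, and that any such cocycle on a tree is trivial (since $T$ is contractible), so the graph is unique up to relabeling. Your Orbit--Stabilizer bookkeeping gives the sizes of the fibers but not this rigidity statement; you should either reproduce the cocycle argument or, equivalently, argue directly that a $\mathbb{Z}/n\mathbb{Z}$-equivariant cover of a tree with prescribed stabilizers is unique because a tree has no monodromy.

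Your flagged concern about minimality---that after deleting infinite leaves one might still have unstable genus-zero components---is legitimate, and the paper's proof is in fact no more explicit on this point than yours; both tacitly rely on the separating model $\mathcal{D}$ being chosen without superfluous blow-ups, so that no such components appear upstairs.
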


\begin{proof} The tree $T$ created in the algorithm is the tree obtained from the canonical semistable model in Section \ref{sepmodel} with the leaves attached. The formulas for the number of preimages of the edges and the vertices are given by Proposition \ref{edgeprop}, parts \ref{edgeprop2} and \ref{edgeprop3} respectively. There is only one graph up to a labeling of the vertices satisfying the covering data found in the algorithm. Indeed, the covering data naturally give a 2-cocycle (in terms of \v{C}ech cohomology) on $T$, which must be trivial. We thus obtain the intersection graph of the semistable model $\mathcal{C}$. Contracting any leaf edges yields the Berkovich skeleton. 
\end{proof}

\begin{exa}
\label{k33}
We compute the abstract tropicalization of the curve defined by the equation
$$y ^3 = x^2(x - \pi)(x - 1)^2(x -1- \pi)(x - 2)^2(x - 2-\pi).$$
\begin{enumerate}
\item The matrix $M$ is
$$
M =
\begin{bmatrix}
0 & \pi & 1 & 1+\pi &2 & 2 + \pi \\
1 & 1 & 1 & 1 & 1 &1
\end{bmatrix},
$$
and so the vector $m$ (organized lexicographically) is
\begin{align*}
m=
(&
-\pi, \ -1, \ -1-\pi, \  -2, \ -2-\pi, \ \pi-1, \ -1, \ \pi-2, \ -2,\ -\pi, \ -1,\\
&-1-\pi,-1+\pi,\ -1, \ -\pi 
).
\end{align*}
Taking $N = 2$, we have
$
m=
(0, \ 2, \ 2, \  2, \ 2, \ 2, \ 2, \ 2, \ 2,\ 0, \ 2,\ 2,\ 2,\ 2, \ 0 
).
$
Therefore, the tree is as displayed in Figure \ref{k33tree}.
\item We have
$\text{div}(f) = 2(0) + (\pi) + 2(1) + (\pi+1) + 2 (2) +(2 + \pi) -9(\infty).$
Then, 
$\rho(\text{div}(f)) = 3 v_{12} +3 v_{34} + 3 v_{56} -9v,$ and
$\psi_{e_{12}}=\psi_{e_{34}}=\psi_{e_{56}}=3.$ On all leaf edges $\psi_e$ is 1 or 2.
\item Each of the edges $e_{12},e_{34},e_{56}$ has 3 preimages, and all leaves have 1 preimage. We can contract these in the tropical curve, so we do not draw them in the graph, but we mention them here because they are necessary for bookkeeping the ramification in the formulas. The middle vertex $v$ has 3 preimages, and the other vertices have 1 preimage. So, the graph is $K_{3,3}$.
\item The lengths of all interior edges in the tree $T$ were 1. These lengths are preserved in $K_{3,3}$ because all edges were unramified. The weights on all vertices are 0. For example,
$$
w(v_{12}) = -3 + 1 +(3(3/3-1)+2(3/1)-1)/2 = 0.
$$
So, the abstract tropicalization of our curve is the metric graph in Figure \ref{k33graph}. Each vertex is labeled with its image in the tree $T$.
\begin{figure}[ht]
\centering
\begin{minipage}{.4\textwidth}
 \centering
 \includegraphics[height=1.3in]{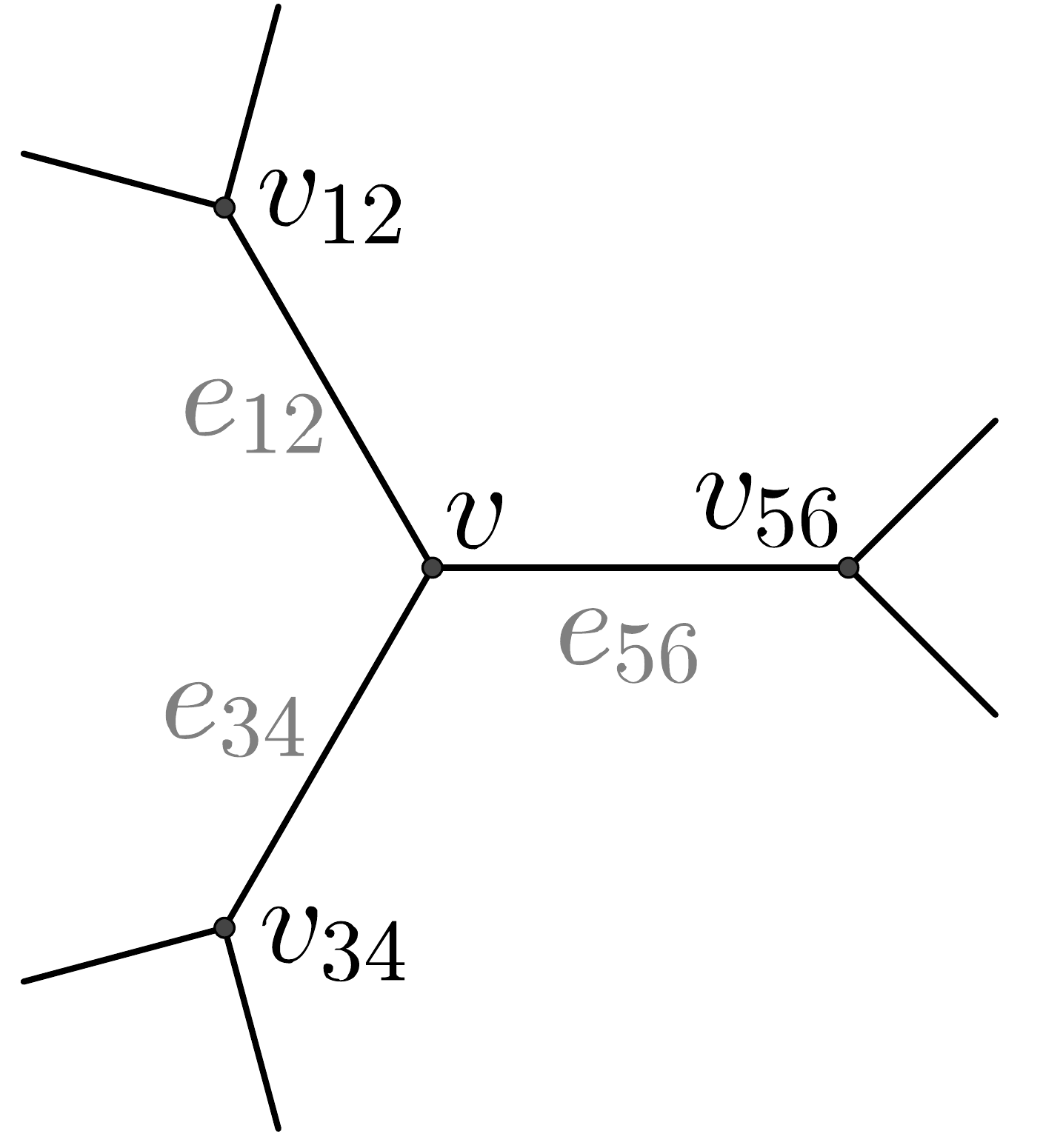}
 \captionof{figure}{The tree $T$ in Example \ref{k33}.}
 \label{k33tree}
\end{minipage}%
\hspace{0.3 in}
\begin{minipage}{.4\textwidth}
 \centering
 \includegraphics[height=1.15in]{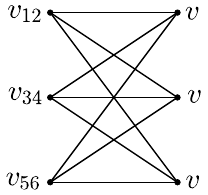}
 \captionof{figure}{Tropicalization of the curve in Example \ref{k33}.}
 \label{k33graph}
\end{minipage}
\end{figure}
\end{enumerate}
\end{exa}

\begin{exa}
In \cite{shimura2} the author shows there is a unique Shimura-Teichm\"{u}ller curve of genus three, $X_3$, defined by the equation $y^4 = x(x-1)(x-\pi)$, and there is a unique Shimura-Teichm\"{u}ller curve of genus four, $X_4$, defined by the equation $y^6 = x(x-1)(x-\pi).$
In \cite[Section 2]{shimura}, the authors compute the period matrix of $X_4$. We now use the Tropicalization Algorithm to compute their Berkovich skeleta.

\begin{enumerate}
\item
In both cases, the ramification points are $0,1,\pi$, and $\infty$. The corresponding tree is in Figure \ref{shimurasT}, where the interior edge has length 1. We call the interior vertices $v_1$ and $v_2$.
\item 
The divisor of $f:=x(x-1)(x+\pi)$ is 
$
\text{div}(f)=(0)+(\pi)+(-1)-3(\infty).
$
The divisor specializes to
$
\rho(\text{div}(f))=2v_{0}-2v_{1}.
$
The corresponding rational function $\psi$ has slope $2$ on the only edge in the tree.
\item We have $\text{gcd}(\psi_e,n)=2$ in both cases. Therefore, the edge has 2 preimages. Both vertices on the tree have leaves, so both $v_0$ and $v_1$ each have one preimage each in the graphs $X_{3,\Sigma}$ and $X_{4,\Sigma}$.
\item 

The length of the interior edge in the tree is 1, so in $X_3$ there are two edges of length $1/2$ and in $X_4$ there are two edges of length $1/3$.
 For the genera of the vertices, we apply the Riemann-Hurwitz formula to obtain the complete picture of the graphs; $X_{3,\Sigma}$ is in Figure \ref{fig:shimuras1} and $X_{4,\Sigma}$ is in Figure \ref{fig:shimuras2}.
 \begin{figure}[ht]
 \centering
\begin{minipage}{.38\textwidth}
 \centering
 \includegraphics[height=0.7in]{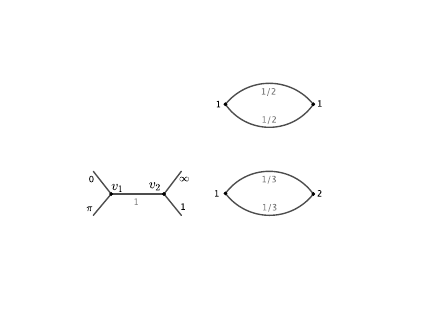}
 \captionof{figure}{\footnotesize{The base tree for the Shimura-Teichm\"{u}ller curves.}}
 \label{shimurasT}
\end{minipage}%
\hspace{-0.5 in}
\centering
\begin{minipage}{.38\textwidth}
 \centering
 \includegraphics[height=0.7in]{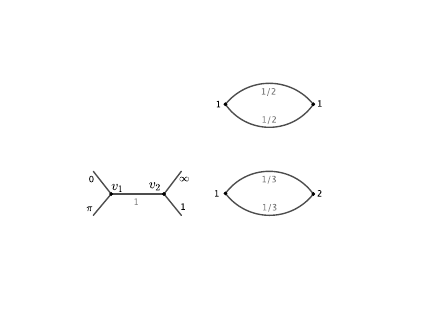}
 \captionof{figure}{\footnotesize{The tropical genus 3 Shimura-Teichm\"{u}ller curve.}}
 \label{fig:shimuras1}
\end{minipage}%
\hspace{-0.5 in}
\begin{minipage}{.38\textwidth}
 \centering
 \includegraphics[height=0.7in]{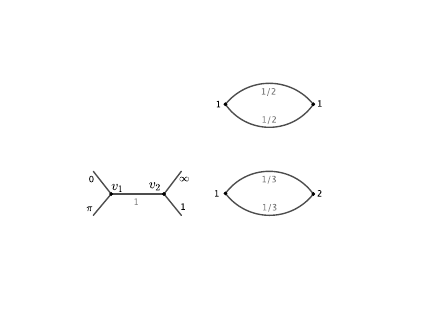}
 \captionof{figure}{\footnotesize{The tropical genus 4 Shimura-Teichm\"{u}ller curve.}}
 \label{fig:shimuras2}
\end{minipage}
\end{figure}
\end{enumerate}
\end{exa}

\section{Realizability}

\label{realizability}
In this section, we study realizability for superelliptic covers. We show every superelliptic cover of prime degree of metric graphs comes from an algebraic superelliptic cover.
A similar result was proved for degree $d$ admissible coverings in \cite[Theorem 1]{admcov} using techniques from \cite{ABBR1}: for every degree $d$ admissible covering of metric graphs $X_\Sigma \rightarrow T$, there exists an {\it{admissible}} covering $\mathcal{X}\rightarrow\mathcal{T}$ over $R$ tropicalizing to $X_\Sigma \rightarrow T$ (see Remark \ref{Admissiblecovering}). The proof uses a tame Berkovich-theoretic semistable lifting theorem \cite[Theorem 7.4]{ABBR1}. A careful examination of the proof then also shows that any Galois covering of metric graphs lifts to an admissible Galois covering of curves. In this sense, the realizability theorem we prove in this section can be considered as a special case of \cite[Theorem 7.4]{ABBR1} and \cite[Theorem 1]{admcov}. 
The main difference here is that the techniques used in \cite{ABBR1} are purely analytic and a priori only give coverings of analytic spaces. Using a GAGA-type result for Berkovich spaces, one then obtains a covering of algebraic curves. This generally makes it hard to write down the explicit equations for the curves, let alone the corresponding coverings. 
\begin{exa}
Consider the multiplicative group $G:=\overline{K}^{*}/\langle{q}\rangle$, where $q\in{\overline{K}}^{*}$ with $v(q)>0$. There is then a canonical way to associate an analytic space $X^{\mathrm{an}}/\overline{K}$ (in the Berkovich sense) to $G$ such that the type-$1$ points of $X^{\mathrm{an}}$ are given by $G$, see \cite[Section 9.2]{bosch2014} for the rigid-analytic construction. This curve $X^{\mathrm{an}}$ is also known as a {\it{Tate curve}}. This defines a proper, reduced, smooth, one-dimensional analytic space, which therefore must come from a unique proper, reduced, smooth algebraic curve $X/\overline{K}$ by \cite[Corollary 3.4.14]{berkovich2012}. This curve is an elliptic curve and an explicit equation for $X$ is given in \cite[Page 410, Theorem 1.1]{silv2} using the $a_{4}(q)$ and $a_{6}(q)$ modular invariants. 
\end{exa}

Unlike in \cite{ABBR1} and \cite{admcov}, our approach is constructive; the proof of our realizability theorem presents a method for finding the defining equation of a curve $X$. That is, we give a polynomial $f\in{K[x]}$ such that the covering $X\rightarrow{\mathbb{P}^{1}}$ given by the function field extension $K(x)\subset{K(x)[y]/(y^{p}-f(x))}$ tropicalizes to the desired superelliptic covering of metric graphs. 

We first recall the set up.
Given a superelliptic covering of curves $X \rightarrow \mathbb{P}^1$, we obtain a superelliptic covering of metric graphs $\psi:\Sigma \rightarrow T$ by computing the tree $T$ with model $T_{M}$ and the divisor $\rho(\text{div}(f)) = \sum a_i P_i$ on $T$ (we recall from before that this is the specialization of the divisor $\text{div}(f)$ to the tree $T_{M}$). The main difficulty in reversing this process is finding $a_i$ which give the graph $\Sigma$.
We show inductively there are enough ways of assigning values to the $a_{i}$ such that the desired tropicalization is obtained. To get acquainted with the sort of problem we are dealing with here, we treat an illustrative example.
\begin{exa}\label{RealizabilityExample1}
Let $T$ be the metric tree arising from the loopless model $T_{M}$ as in Figure \ref{FIG:harmonic_ex}. Here we assume that the edges $f_{1}$ and $f_{4}$ have length $1$.  Let $S$ be a fixed set of $6$ points in $\mathbb{P}^{1}(K)$ whose associated tropical tree is $T$ and let $\Sigma\rightarrow{T}$ be a superelliptic covering of $T$. We would like to find all algebraic superellipic coverings $X\rightarrow{\mathbb{P}^{1}}$ such that the induced covering from Proposition \ref{disbranprop1} is $\Sigma\rightarrow{T}$. Here we use the separating semistable model $\mathcal{Y}$ from Section \ref{sepmodel} in Proposition~\ref{disbranprop1}.

The degree $3$ superelliptic coverings of $\mathbb{P}^{1}$ that are ramified only above points in $S$ are given by rational functions $f\in{K(x)}$ such that $v_{P}(f)\not\equiv{0}\bmod{3}$ only for $P$ in $S$. Indeed, any such $f$ gives an extension of function fields
\begin{equation*}
K(x)\rightarrow{K(x)[y]/(y^3-f)}
\end{equation*}
with a superelliptic covering of smooth curves $X\rightarrow{\mathbb{P}^{1}}$ that is only ramified above points in $P$. Conversely, any degree $3$ superelliptic covering $X\rightarrow{\mathbb{P}^{1}}$ arises from a function field extension $K(x)\rightarrow{K(x)[y]/(y^{3}-f)}$ for some $f\in{K(x)}$ by Kummer theory. It then follows from Lemma \ref{SplittingLemma1} that $v_{P}(f)\not\equiv{0}\bmod{3}$ if and only if the covering is ramified above $P$.

We will now classify the coverings $\Sigma\rightarrow{T}$ arising from these $f$ in terms of the valuations $v_{P}(f)$ of $f$ at points $P\in{S}$. 

Let $S=(P_{5},P_{6},P_{7},P_{8},P_{9},P_{10})$ be a set of $6$ points such that the induced tropical tree from Section \ref{sepmodel} is equal to $T$. Here the point $P_{i}$ corresponds to the leaf $f_{i}$. For instance, we can take the projective points
$$
\{P_5,P_6,P_7,P_8,P_9,P_{10}\} = \{
[\pi^2:1],
[2\pi^2:1],
[\pi:1],
[1:1],
[2:1],
[1:0]\}.
$$

For any choice of integers $(c_{5},c_{6},c_{7},c_{8},c_{9},c_{10})$ with $c_{5}+c_{6}+c_{7}+c_{8}+c_{9}+c_{10}=0$, we canonically obtain an $f\in{K(x)}$ with $v_{P_{i}}(f)=c_{i}$. 
For example, if we do this for the points defined above and the list $(1,2,1,-1,2,-5)$, we obtain the rational function 
$$
f=(x-\pi^{2})(x-2\pi^{2})^{2}(x-\pi)(x-1)^{-1}(x-2)^{2}.
$$

Consider the curve defined by $y^3=f$ for any $f$ defined by a sequence $(c_{5},c_{6},c_{7},c_{8},c_{9},c_{10})$ as above. By normalizing, we assume that $v_{P}(f)=0$ for $P$ not in $S$. We will also assume that $v_{P}(f)\not\equiv{0}\bmod{3}$ for every $P\in{S}$. 

By Lemma \ref{slopelemma}, the slope of the piecewise linear function $\psi_{f}$ corresponding to $f$ (see Section \ref{specialdivisor}) on $f_{1}$ is equal to $c_{5}+c_{6}$. By Proposition \ref{edgeprop}, we see that the induced morphism of metric graphs from Proposition \ref{disbranprop1} is split above $f_{1}$ if and only if $$c_{5}+c_{6}\equiv{0\bmod{3}}.$$ The pairs $(\overline{c}_{5},\overline{c}_{6})$ (where the bar denotes the residue class in $\mathbb{F}_{3}$) that correspond to coverings that are split above $f_{1}$ are consequently given by $(\overline{1},\overline{2})$ and $(\overline{2},\overline{1})$. The pairs $(\overline{c}_{5},\overline{c}_{6})$ that correspond to coverings that are not split above $f_{1}$ are given by $(\overline{1},\overline{1})$ and $(\overline{2},\overline{2})$.

We now similarly consider the slope of the piecewise linear function $\psi_{f}$ on the edge $f_{4}$. Again by Lemma \ref{slopelemma}, we see that the induced morphism is split above $f_{4}$ if and only if $$c_{5}+c_{6}+c_{7}\equiv{0}\bmod{3}.$$ We can now express $\Sigma$ in terms of triples $(\overline{c}_{5},\overline{c}_{6},\overline{c}_{7})$. The triples that correspond to coverings that are split on $f_{1}$ and nonsplit on $f_{4}$ are given by
\begin{equation}\label{TriplesSplitNonSplit}
\{(\overline{2},\overline{1},\overline{1}),(\overline{2},\overline{1},\overline{2}),(\overline{1},\overline{2},\overline{1}),(\overline{1},\overline{2},\overline{2})\}.
\end{equation}The triples that correspond to coverings that are nonsplit on $f_{1}$ and split on $f_{4}$ are given by $$\{(\overline{1},\overline{1},\overline{1}),(\overline{2},\overline{2},\overline{2})\}$$ and the triples that correspond to coverings that are nonsplit on both edges are given by $$\{(\overline{1},\overline{1},\overline{2}),(\overline{2},\overline{2},\overline{1})\}.$$

Note that the covering $\Sigma\rightarrow{T}$ is independent of the valuations $c_{8},c_{9}$ and $c_{10}$ as soon as the others are determined. 

Let us now find a rational function $f\in{K(x)}$ that gives rise to the superelliptic covering in Figure \ref{FIG:harmonic_ex}. The covering is split on $f_{1}$ and nonsplit on $f_{4}$. We thus see that any of the four triples in Equation \ref{TriplesSplitNonSplit} gives rise to the desired superelliptic covering. To be explicit, take 
$$
f=(x-\pi^{2})^2(x-2\pi^{2})^1(x-\pi)^2(x-1)(x-2).
$$
The covering given by $(x,y)\mapsto{x}$ for $y^3=f$ then gives a covering of metric graphs as in Figure \ref{FIG:harmonic_ex}.

\end{exa}

Let $T$ be a metric tree with rational edge lengths. By redefining the valuation and subdividing the edges, we can find a finite model $T_{M}$ for $T$ with vertex set $V(T_{M})$, edge set $E(T_{M})$ and edges having length one. 
\begin{rem}
For the remainder of the section, we will only be using this particular model of $T$, so we will write $T=T_{M}$ for both the loopless model and the corresponding metric graph. 
\end{rem}

Let $R(E(T),\mathbb{F}_{p})$ be the ring of functions  
\begin{equation*}
J:E(T)\rightarrow{\mathbb{F}_{p}}.
\end{equation*}
Here addition and multiplication are induced from the ring structure on $\mathbb{F}_{p}$. One then easily checks that the units of this ring are given by functions $u:E(T)\rightarrow{\mathbb{F}_{p}^{*}}$.

We would like to associate a superelliptic covering of metric graphs to $T$ and $J(\cdot{})$. Imposing a condition on $J(\cdot{})$ which reflects the Riemann-Hurwitz equation allows us to do this. To that end, recall that for any superelliptic covering $X_{k}\rightarrow{\mathbb{P}^{1}_{k}}$ of degree $p$ with branch locus $B$, we have that
\begin{equation*}
g(X_{k})=\dfrac{2(1-p)+(p-1)|B|}{2}.
\end{equation*}
We will use this to define admissible coverings. For every vertex $v$, let $|B_{v}|$ be the number of edges adjacent to $v$ with $J(e)\neq{0\mod{p}}$. 
\begin{mydef}\label{HurwitzAdmissible}
Let $T$ be a model of a metric tree with rational edge lengths, let $E(T)$ be  its edge set and let $V(T)$ be its vertex set. A function $J:E(T)\rightarrow{\mathbb{F}_{p}}$ is called Riemann-Hurwitz admissible (or \emph{admissible}) if the following two conditions are satisfied: 
\begin{enumerate}
\item For every vertex $v\in{V(T)}$, the number
\begin{equation}\label{EquationGenus}
r_{v}:=\dfrac{2(1-p)+(p-1)|B_{v}|}{2}
\end{equation}
is an integer greater than or equal to zero. 
\item For every leaf edge $e$ in $T$, we have $J(e)\neq{0}\bmod{p}$. 
\end{enumerate}
\end{mydef}
The first condition allows us to construct a vertex $v'$ above $v$ with genus as predicted from the Riemann-Hurwitz formula. The second condition is added to ensure that the leaves have some nontrivial contribution to the vertex they are attached to. 
\begin{lemma}\label{Admissiblecoverings}
Let $J:E(T)\rightarrow{\mathbb{F}_{p}}$ be a Riemann-Hurwitz admissible function for a model $T$, 
as in Definition \ref{HurwitzAdmissible}. Then there exists a superelliptic covering $\psi:\Sigma\rightarrow{T}$ with the following properties:
\begin{enumerate}
\item There are $p$ edges lying above $e\in{T}$ if and only if $J(e)=0\mod{p}$.
\item There are $p$ vertices lying above a vertex $v\in{T}$ if and only if $|B_{v}|=0$. 
\item Let $v'\in{\Sigma}$. Then $g(v')=r_{v}$, as in Equation \ref{EquationGenus}.
\end{enumerate}

Conversely, we can construct an admissible function $J:E(T)\rightarrow{\mathbb{F}_{p}}$ for any superelliptic covering $\Sigma\rightarrow{T}$ as follows. We define
$$
J(e)={1}\mod{p}
$$
if and only if there is only edge lying above $e$ and 
$$
J(e)={0}\mod{p}
$$
if and only if there are $p$ edges lying above $e$.
\end{lemma}
\begin{proof}
We construct $\Sigma$ as follows. For every $v\in{T}$ with $|B_{v}|\neq{0}$, construct one vertex $v'$ with $g(v')=r_{v}$. For every edge $e\in{T}$ connecting to this $v$, construct $p$ edges with length one if $J(e)=0$ and $1$ edge with length $1/p$ if $J(e)\neq{0}$. 
For every vertex $v\in{T}$ with $|B_{v}|=0$, construct $p$ vertices $v_{i}$ with $g(v_{i})=0$ and for every adjacent edge $e$, construct $p$ edges $e_{i}$. Connecting these vertices and edges then yields the metric graph $\Sigma$, which has a natural $G:=\mathbb{Z}/p\mathbb{Z}$ action such that $\Sigma/G=T$. 
The function $J$ constructed in the lemma is admissible by the local Riemann-Hurwitz conditions.  
\end{proof}

\begin{rem}\label{RemarkJ}
We note that an admissible  $J(\cdot{})$ only fixes the superelliptic covering (in the sense of Lemma \ref{Admissiblecoverings}) up to a twisting of the edges, see Figure \ref{TwistingEdges}.
Furthermore, multiplying $J$ by any unit $u\in{(R(E(T),\mathbb{F}_{p}))^{*}}$ yields a function $u\cdot{J}$ that induces the same covering in the sense of Lemma~\ref{Admissiblecoverings}.  
\begin{figure}[ht]
\begin{center}
\includegraphics[width=2.5 in, height=1.0 in]{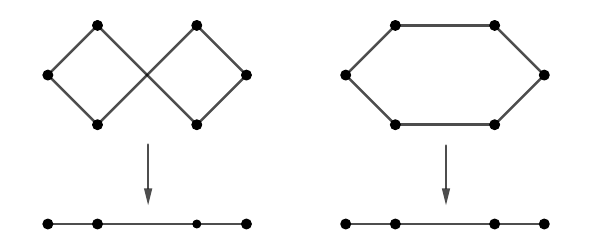}
\caption{Two $\mathbb{Z}/2\mathbb{Z}$-coverings whose associated admissible functions $J:E(T)\rightarrow{\mathbb{F}_{p}}$ from Lemma \ref{Admissiblecoverings} are the same. Here the edges are "twisted", which is to say that these coverings differ by an automorphism of order two. Note that the leaf edges have been surpressed in this example. } 
\label{TwistingEdges}
\end{center}
\end{figure}
\end{rem}
Let $\phi:\Sigma\rightarrow{T}$ be a superelliptic covering of metric graphs with rational edge lengths and let $J(\cdot{})$ be as in Lemma \ref{Admissiblecoverings}. 
Let $L(T)$ be the set of leaf edges in $T$ and let $R_{\phi}=|L(T)|$ be the number of leaf edges $e\in{T}$. 
This will also be referred to as the number of branch points of the covering. To show there exists an algebraic covering tropicalizing to the given covering, we construct $R_{\phi}$ points in $\mathbb{P}^{1}(K)$, labeled $P_{i}$,  and coefficients $a_{i}\in\mathbb{Z}$ such that the following are satisfied: 
\begin{enumerate}
\item The divisor $D:=\sum_{{i}}a_{i}(P_{i})$ has degree $0$.
\item Every point $P_{i}$ specializes to a unique leaf vertex $v(P_{i})\in{V(T)}$ under the specialization map $\rho():\mathrm{Div}(\mathcal{X})\rightarrow{\mathrm{Div}(\Sigma(\mathcal{X}))}$ for some strongly semistable model $\mathcal{X}$ for $\mathbb{P}^{1}$ with intersection graph $\Sigma(\mathcal{X})=T$.
\item There exists a unit $u\in{(R(E(T),\mathbb{F}_{p}))^{*}}$ such 
that the slope $\psi_{e}$ of the piecewise linear function $\psi$ corresponding to $\rho(D)=\Delta(\psi)$ satisfies
$$
\psi_{e}\bmod{p}=u(e)\cdot{}{J(e)}.
$$

\end{enumerate}
Since $D$ has degree zero, we can find an $f\in{K(x)}$ such that $\mathrm{div}(f)=D$. The third condition then ensures that the covering given by $(x,y)\mapsto{x}$ for $y^{p}=f$ tropicalizes to $\phi$. This can be seen using Proposition \ref{edgeprop}. The existence of the strongly semistable model $\mathcal{X}$ follows from the considerations in Section \ref{sepmodel}.

\begin{rem}
Throughout this section, we will consider piecewise linear functions $\psi$ on $T$ as functions from $V(T)$ to $\mathbb{Z}$. In the terminology of Section \ref{specialdivisor}, they are elements of $\mathcal{M}(T)$. This allows us to consider the base change of the space of piecewise linear functions to $\mathbb{F}_{p}$, see Equations \ref{BaseChange1} and \ref{BaseChange2}.  
\end{rem}

We first formalize the problem in terms of principal divisors on the tree $T$. We assume that we have already found $P_{i}\in\mathbb{P}^{1}(K)$ that uniquely specialize to the leaf vertices of $T$. The explicit construction of the $P_{i}$ and the strongly semistable model $\mathcal{X}$ will be given in Theorem \ref{realizabilitythm}. Let $M=\mathbb{Z}^{|L(T)|}$ and consider the $\mathbb{Z}$-submodule 
\begin{equation}\label{DegreeZeroHyperplane}
N=\left\{(x_{i})\in\mathbb{Z}^{|L(T)|}:\sum_{i=1}^{|L(T)|}x_{i}=0\right\}.
\end{equation}

There are then natural maps $j:N\rightarrow{\mathrm{Div}^{0}(\mathbb{P}^{1})}$ and $j_{\Sigma}:N\rightarrow{\mathrm{Div}^{0}(T)}$ given by
$$
j:(x_{i})\mapsto{\sum_{i=1}^{|L(T)|}}x_{i}\cdot{}(P_{i})
$$
and
$$
j_{T}:(x_{i})\mapsto{}{\sum_{i=1}^{|L(T)|}x_{i}\cdot{}\rho(P_{i})}.
$$
Together with the specialization map $\rho:\mathrm{Div}^{0}(\mathbb{P}^{1})\rightarrow{\mathrm{Div}^{0}(T)}$, these fit into the following commutative diagram:
\begin{equation*}
\begin{tikzcd}
N \arrow{r}{j} \arrow{dr}{j_{T}} & \mathrm{Div}^{0}(\mathbb{P}^{1}) \arrow{d}{\rho} \\
{} & \mathrm{Div}^{0}(T).
\end{tikzcd}
\end{equation*}
Since $T$ is a tree, we have $\mathrm{Div}^{0}(T)=\mathrm{Prin}(T)$ and we can thus write any degree zero divisor as $\Delta(\psi)$ for some piecewise linear function $\psi$.  
For any $a=(a_{i})\in{N}$, 
we write $\psi_{a}$ for a piecewise linear function on $T$ corresponding to the degree zero divisor $j_{T}((a_{i}))=\sum_{i}a_{i}\cdot{}\rho(P_{i})$. By base changing $j$ and $j_{T}$ over $\mathbb{F}_{p}$, we obtain the morphisms 
\begin{equation}\label{BaseChange1}
\overline{j}:N\otimes_{\mathbb{Z}}{\mathbb{F}_{p}}\rightarrow{\mathrm{Prin}(\mathbb{P}^{1})/p\cdot{}\mathrm{Prin(\mathbb{P}^{1})}}
\end{equation}
and
\begin{equation}\label{BaseChange2}
\overline{j}_{T}:N\otimes_{\mathbb{Z}}{\mathbb{F}_{p}}\rightarrow{\mathrm{Prin}(T)/p\cdot{}\mathrm{Prin(T)}}.
\end{equation}

We will now show that two degree zero divisors $j(a)$ and $j(a')$ in $\mathrm{Div}^{0}(\mathbb{P}^{1})$ that are equivalent under these specialization maps give rise to equivalent admissible functions $J(\cdot{})$. 
Here the admissible function $J(\cdot{})$ for a degree zero divisor $j(a)$ is obtained as follows. 

We write $j((a_{i}))=\mathrm{div}(f)$ for some $f$ and then consider the algebraic covering $C_{a}\rightarrow{\mathbb{P}^{1}}$ induced by the extension of function fields
\begin{equation*}
K(x)\rightarrow{K(x)[z]/(z^{p}-f)}.
\end{equation*}
This algebraic covering then gives rise to a superelliptic covering of metric graphs $\Sigma_{a}\rightarrow{T}$ by Proposition \ref{disbranprop1}.

By Lemma \ref{Admissiblecoverings}, we obtain the desired admissible function $J(\cdot{})$. 

\begin{lemma}
\label{EquivalentCoverings1}
Let $a,a'\in{N}$ be such that 
\begin{equation*}
\overline{j}_{T}(\overline{a})=\overline{j}_{T}(\overline{a}'),
\end{equation*}
where $\overline{a}$ (resp. $\overline{a}'$) denotes the image of $a$ (resp. ${a}'$) in $N\otimes_{\mathbb{Z}}{\mathbb{F}_{p}}$.
Then the superelliptic coverings of metric graphs 
$\phi_{a}:\Sigma_{a}\rightarrow{T}$ and $\phi_{a'}:\Sigma_{a'}\rightarrow{T}$ corresponding to the degree zero divisors $j(a)$ and $j(a')$ are the same in the sense of Lemma \ref{Admissiblecoverings}. 
\end{lemma}
\begin{proof}
The values of the piecewise linear functions $\psi_{a}$ and $\psi_{a'}$ corresponding to $j_{T}(a)$ and $j_{T}(a')$ differ at every vertex by a multiple of $p$. The slopes on every edge then also differ by a multiple of $p$. Using Proposition \ref{edgeprop}, we then see that the number of vertices and edges lying above every vertex and edge of $T$ is the same for both $\phi_{a}:\Sigma_{a}\rightarrow{T}$ and $\phi_{a'}:\Sigma_{a'}\rightarrow{T}$. We thus see that the coverings yield the same admissible function $J:E(T)\rightarrow{\mathbb{F}_{p}}$, as desired. 

For another proof, note that any divisor of the form $pD\in\mathrm{Div}^{0}(T)$ on the tree corresponds to the divisor of a function $g(x)^{p}$ for some $g(x)\in{K(x)}$. If we then have $j(a)=pD+j(a')$, then the algebraic curves $z^{p}=f_{a}(x)$ and $z'^{p}=f_{a}(x)\cdot{g(x)^{p}}$ are isomorphic by the transformation 
\begin{equation*}
(x,z)\mapsto{(x,z\cdot{}{g})}.
\end{equation*}
The corresponding metric graphs are then also isomorphic, as desired.  
\end{proof}

For the remainder of the section, we fix a target vertex $v_{0}$ with at least two branch points, say $P_{0,1}$ and $P_{0,2}$. We want to construct a divisor $j((a_{i}))=\sum_{i=1}^{|L(E)|}a_{i}(P_{i})$ that gives rise to the covering $\phi$. We will first assign values to the $a_{i}$ without assuming that the vector $(a_{i})$ belongs to the degree zero submodule $N$, as introduced in Equation \ref{DegreeZeroHyperplane}. Let $a_{0,1}$ and $a_{0,2}$ be the coefficients corresponding to $P_{0,1}$ and $P_{0,2}$. 
In the argument that follows, we will not assign any values to these coefficients until the very end. These coefficients $a_{0,1}$ and $a_{0,2}$ are then used to ensure that $(a_{i})\in{N}$.
For any edge $e$ in $T$, consider the connected component $T_{e}$ of $T\backslash\{e\}$ not containing $v_{0}$ as in Lemma \ref{slopelemma}. The slope of a rational function giving this divisor along an edge $e$ is now given by the formula in Lemma~\ref{slopelemma}: 
\begin{equation*}
\sum_{x \in T_e} (\psi)(x)=\sum_{P_{i}\in{T_{e}}}{a_{i}}.
\end{equation*}

\begin{mydef}
Let $s_{e}$ be the number of $P_{i}$ reducing to the connected component $T_{e}$. 
The \emph{total Laplacian} on the component $T_{e}$ is a function 
$
\Delta_{e}(\psi):(\mathbb{F}_{p})^{s_{e}}\rightarrow{\mathbb{F}_{p}},
$
sending 
$
(a_{i})\mapsto \sum_{P_{i}\in{T_{e}}}{a_{i}}.
$ We consider these as elements of $\mathbb{F}_p$ because we are only interested in the value of the slopes and the exponents mod $p$, see Lemma \ref{EquivalentCoverings1}. 
\end{mydef}

This definition allows us to view the formula for the slope of the Laplacian on $e$ as a function of the $a_{i}$ laying on the connected component $T_{e}$. 
The Laplacian $\psi$ corresponding to the covering $\phi$ must satisfy the {\emph{total Laplacian equations}} for every $e\in{E(\Sigma)}$:

\begin{equation*}
\Delta_{e}(\psi)(a(e))=u(e)\cdot{J(e)},
\end{equation*}
where $u(e)\in\mathbb{F}_{p}^{*}$ and $a(e)$ is the vector of coefficients $(a_{i})$ corresponding to points $P_{i}$ reducing to leaves of $T_{e}$.

We adopt the following notation for the rest of this section. Here, we use the symbols $\overline{0}$ and $\overline{1}$ for the residue classes of $0$ and $1$ in $\mathbb{F}_{p}$.

\begin{enumerate}
\item We write $\Delta_{e}(\psi)={{1}}$ if there exist $a_{i}$ such that $\Delta_{e}(\psi)(a(e))\in{\mathbb{F}^{*}_{p}}$. 
\item Similarly, we write $\Delta_{e}(\psi)={{0}}$ if there exist $a_{i}$ such that $\Delta_{e}(\psi)(a(e))=\overline{0}$.
\item Given a set of edges $E:=\{e_{i}\}$, we write $\Delta_{E}(\psi)=\delta_{e_{i}}$ for $\delta_{e_{i}}\in\{{0},{1}\}$ if there exist $a_{i}$ such that all conditions $\Delta_{e_{i}}(\psi)=\delta_{e_{i}}$ are met simultaneously for this set of $\{a_{i}\}$. 
\item Given an edge $e$ with connected component $T_{e}$ and numbers $\delta_{e_{i}}\in\{{0},{1}\}$ for $e_{i}\in{T_{e}}$, we write $\Delta_{T_{e}}(\psi)=c$ for a $c\in\mathbb{F}_{p}$ if there exist $a_{i}$ such that $\Delta_{e_{j}}(\psi)={\delta_{e_{j}}}$ for every $e_{j}$ in $T_{e}$ and such that $\Delta_{e}(\psi)(a_{e})=c$.
\end{enumerate}
With this notation, the covering $\phi:\Sigma\rightarrow{T}$ naturally gives us a set of $\{\delta_{e_{i}}\}$ with $\delta_{e_{i}}\in\{{0},{1}\}$: we set $\delta_{e_{i}}=0$ if $J(e_{i})=\overline{0}$ and $\delta_{e_{i}}=1$ if $J(e_{i})=\overline{1}$, where $J(\cdot{})$ is as given by Lemma \ref{Admissiblecoverings}.

\begin{exa}
Let $P_{i}\in{\mathbb{P}^{1}(K)}$ for $i\in\{5,6,7,8,9,10\}$ with tropical tree $T$ be as in Example \ref{RealizabilityExample1}. The fixed vertex for this example will be the image of $v\in{\Sigma}$ in $T$. We will be more general here and take a prime number $p$ not equal to $2$. The submodule $N$ as introduced in Equation \ref{DegreeZeroHyperplane} is given here by 
\begin{equation*}
N=\{(c_{5},c_{6},c_{7},c_{8},c_{9},c_{10}):c_{5}+c_{6}+c_{7}+c_{8}+c_{9}+c_{10}=0\}.
\end{equation*}

The total Laplacian $\Delta_{f_{1}}(\psi):(\mathbb{F}_{p})^{2}\rightarrow{\mathbb{F}_{p}}$ for $f_{1}\in{T}$ is then given by  $$(\overline{c}_{5},\overline{c}_{6})\mapsto{\overline{c}_{5}+\overline{c}_{6}}.$$
By choosing suitable $c_{5}$ and $c_{6}$, we see that $\Delta_{f_{1}}(\psi)={0}$ and $\Delta_{f_{1}}(\psi)={1}$. Indeed, take $(\overline{c}_{5},\overline{c}_{6})=(\overline{1},\overline{p-1})$ and $(\overline{c}_{5},\overline{c}_{6})=(\overline{1},\overline{p-2})$ respectively. Note that the assumption $p\neq{2}$ is crucial here. If $p=2$, then we cannot obtain $\Delta_{f_{1}}(\psi)={1}$, as the slope is automatically equal to $0\bmod{2}$ by our assumption $c_{5},c_{6}\equiv{1}\bmod{2}$ (see Definition \ref{HurwitzAdmissible}). 

For $f_{4}$, the total Laplacian $\Delta_{f_{4}}(\psi):(\mathbb{F}_{p})^{3}\rightarrow{\mathbb{F}_{p}}$ is given by
\begin{equation*}
(\overline{c}_{5},\overline{c}_{6},\overline{c}_{7})\mapsto{\overline{c}_{5}+\overline{c}_{6}+\overline{c}_{7}}.
\end{equation*}
Suppose we are given $\delta_{f_{1}}=0$ or $\delta_{f_{1}}=1$. 
We now claim that $\Delta_{T_{f_{4}}}(\psi)=c$ for any $c\in{\mathbb{F}^{*}_{p}}$. 
Indeed, suppose that $\delta_{f_{1}}=0$. Then $\overline{c}_{5}+\overline{c}_{6}=\overline{0}$ and we take $\overline{c}_{7}=c\in(\mathbb{F}_{p})^{*}$. Suppose that $\delta_{f_{1}}=1$. If $c\neq{\overline{2}}$, then we take the vector $$(\overline{c}_{5},\overline{c}_{6},\overline{c}_{7})=(\overline{1},\overline{1},c-\overline{2}).$$
If $c=\overline{2}$, then we take the vector $$(\overline{c}_{5},\overline{c}_{6},\overline{c}_{7})=(\overline{1},\overline{2},\overline{-1})$$. This shows that $\Delta_{T_{f_{4}}}(\psi)=c$ for any $c\in{\mathbb{F}^{*}_{p}}$ and any choice of $\delta_{f_{1}}\in\{0,1\}$.  
\end{exa}

\begin{lemma} Let $\Sigma \rightarrow T$ be a superelliptic covering of metric graphs with rational edge lengths. Let $e \in T$ be an edge. 
\begin{enumerate}
\item If $e$ is ramified, then $\Delta_{T_{e}}(\psi)=c$ for any $c\in\mathbb{F}^{*}_{p}$. 
\item If $e$ is unramified, then $\Delta_{T_{e}}(\psi)=\overline{0}$.
\end{enumerate}
\label{keylemma}
\end{lemma}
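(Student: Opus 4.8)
The plan is to induct on the number of edges of $T$ contained in the component $T_{e}$. The first step is to record the combinatorial input that drives the argument: \emph{at every interior vertex of $T$ the number of incident ramified edges is never equal to $1$, and when $p=2$ it is even.} This follows from the local Riemann--Hurwitz condition (built into Definition~\ref{galoisdef}) together with the fact that, $T$ being a tree, every vertex of $T$ has weight $0$. Indeed, for an interior vertex $v$ either $v$ has $p$ preimages each of local degree $1$, forcing every edge at $v$ to be unramified, or $v$ has a single preimage $\tilde v$ of local degree $p$; in the latter case local Riemann--Hurwitz at $\tilde v$ reads $w(\tilde v)=1-p+\tfrac{1}{2}K(p-1)$, where $K$ is the number of ramified edges at $v$, and nonnegativity of $w(\tilde v)$ (together with integrality when $p=2$) forces $K\geq 2$, with $K$ even when $p=2$.

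For the base case $T_{e}=\{v\}$, the vertex $v$ is a leaf of $T$ and hence a branch point, so $e$ is ramified; since $T_{e}$ contains no edges and $\Delta_{e}(\psi)$ is just the freely chosen weight $a_{v}$, one can realize $\Delta_{T_{e}}(\psi)=c$ for every $c\in\mathbb{F}_{p}$, in particular for every $c\in\mathbb{F}_{p}^{*}$, so part (1) holds and part (2) is vacuous. For the inductive step, let $v$ be the endpoint of $e$ lying in $T_{e}$ and $e_{1},\dots,e_{m}$ the other edges at $v$; then $v$ is interior, the subtrees $T_{e_{j}}$ are pairwise disjoint, the branch points of $T_{e}$ are partitioned among them, and consequently $\Delta_{e}(\psi)=\sum_{j=1}^{m}\Delta_{e_{j}}(\psi)$ with the weights sitting in distinct $T_{e_{j}}$ independent of one another. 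Applying the inductive hypothesis to each $T_{e_{j}}$, I can choose the weights in $T_{e_{j}}$ so that every edge strictly inside $T_{e_{j}}$ carries the ramification prescribed by $\psi$ and $\Delta_{e_{j}}(\psi)$ equals a value $c_{j}$ that may be chosen arbitrarily in $\mathbb{F}_{p}^{*}$ if $e_{j}$ is ramified and is forced to be $0$ if $e_{j}$ is unramified. Since $c_{j}\neq 0$ precisely when $e_{j}$ is ramified, this simultaneously installs the correct ramification on $e_{j}$ itself, so every edge inside $T_{e}$ is accounted for, and the lemma reduces to making $\sum_{e_{j}\text{ ramified}}c_{j}$ equal to the prescribed value of $c$.

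What remains is elementary arithmetic in $\mathbb{F}_{p}$. Let $k$ be the number of ramified edges among $e_{1},\dots,e_{m}$, so that the number of ramified edges at $v$ is $k$ if $e$ is unramified and $k+1$ if $e$ is ramified; by the fact recorded above this number avoids $1$, so $e$ ramified forces $k\geq 1$ and $e$ unramified forces $k\neq 1$. If $e$ is ramified: when $k=1$ the sum $c_{1}$ ranges over all of $\mathbb{F}_{p}^{*}$, and when $k\geq 2$ with $p$ odd a sum of $k$ nonzero elements can equal any element of $\mathbb{F}_{p}$ (already $\mathbb{F}_{p}^{*}+\mathbb{F}_{p}^{*}=\mathbb{F}_{p}$ for $p$ odd), so any $c\in\mathbb{F}_{p}^{*}$ is attainable. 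If $e$ is unramified: when $k=0$ the sum is $0$, and when $k\geq 2$ with $p$ odd a sum of $k$ nonzero elements can be made $0$. For $p=2$ the number of ramified edges at $v$ is even, so $e$ is ramified iff $k$ is odd, and then $\sum_{e_{j}\text{ ramified}}c_{j}=k\bmod 2$ equals the unique element $1$ of $\mathbb{F}_{2}^{*}$ exactly when $e$ is ramified and equals $0$ when $e$ is unramified. Both parts of the lemma follow.

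The main obstacle is the degenerate case $p=2$: there $\mathbb{F}_{p}^{*}$ is trivial, so one cannot tune the individual values $c_{j}$ and must instead exploit the parity of the number of ramified edges at an interior vertex---which is precisely why the preliminary Riemann--Hurwitz bookkeeping, and with it the hypothesis that the base is genuinely a tree (so that all weights on $T$ vanish), is essential rather than cosmetic. The remaining points---keeping the subtrees $T_{e_{j}}$ independent, and noting that prescribing a nonzero slope on a ramified child edge automatically records its ramification---are routine.
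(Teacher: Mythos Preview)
Your proof is correct and follows essentially the same inductive argument as the paper's: both induct on $|E(T_{e})|$ and use the local Riemann--Hurwitz condition to guarantee that at each vertex there are enough ramified incident edges (your $K\ge 2$, with $K$ even when $p=2$) to solve the required equation in $\mathbb{F}_{p}$. The only differences are presentational---you fold the leaf edges into the induction (so the base case is a single branch point) and make the Riemann--Hurwitz bookkeeping and the $p=2$ parity argument explicit, whereas the paper treats global branch points and child edges in parallel and leaves the $p=2$ arithmetic implicit.
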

\begin{proof}
We prove the lemma by induction on $|E(T_{e})|$. The inductive hypothesis is
\begin{center}
$I_{n}$: For every $e$ such that $|E(T_{e})|\leq{n}$, we have $\Delta_{T_{e}}(\psi)=0$ if $e$ is unramified and $\Delta_{T_{e}}(\psi)=c$ for every $c\in\mathbb{F}^{*}_{p}$ if $e$ is ramified. 
\end{center}
For $n=0$, $T_{e}$ consists of a single vertex $v$. In other words, $e$ is a leaf edge with only one point $P_{e}$ associated to it. For $D=\sum_{e\in{L(T)}}c_{e}(P_{e})$, we then have that $\delta_{e}(\psi)=c_{e}$, which we can choose freely. Thus $\Delta_{T_{e}}(\psi)=c$ for any $c\in\mathbb{F}_{p}^{*}$. 

Now suppose the statement is true for $n$. Let $e$ be any edge such that $|E(T_{e})|=n+1$. Let $v$ be the vertex in $T_{e}$ connected to $e$. Then for every other edge connecting to $v$, we have $|E(T_{e_{i}})|\leq{n}$, so by the induction hypothesis we know the statement is true for $T_{e_{i}}$. Suppose $e$ is ramified (that is, $J(e)=1\bmod{p}$). Then $v$ is connected to another edge $e'$ that is ramified (which can be a leaf edge, coming from a branch point $P_{e}$ or another edge with $J(e)=1\bmod{p}$). Indeed, otherwise the covering at $v$ would only be ramified at one point, which is impossible by the Riemann-Hurwitz conditions. If it is the only ramified edge, we are done because $\Delta_{T_{e}}(\psi)=\Delta_{T_{e'}}(\psi)$. Otherwise, there are at least two ramified edges connected to $v$. For $p\neq{2}$, we can adjust these two values freely to obtain any value $c\in\mathbb{F}_{p}^{*}$. In other words, $\Delta_{T_{e}}(\psi)=c$ for any $c\in\mathbb{F}_{p}^{*}$. If $p=2$, then there must be at least three other ramified edges connected to $v$, because otherwise the covering would be unramified at $e$. We then again easily obtain $\Delta_{T_{e}}(\psi)=\overline{1}\in(\mathbb{F}_{2})^{*}$, as desired.  

Suppose that $e$ is unramified and let $v$ again be the vertex in $T_{e}$ that is connected to $e$. The proof in this case is similar. If every other edge connecting to $v$ is unramified, then $\Delta_{T_{e}}(\psi)=0$. Otherwise, there are at least two edges that are ramified. Using the induction hypothesis on these edges and Lemma \ref{slopelemma}, we obtain $\Delta_{T_{e}}(\psi)=0$, as desired. 
\end{proof}

We apply Lemma \ref{keylemma} for the only nonleaf edge connected to the vertex $v_{0}$ to obtain an assignment for all $a_i$ except the ones corresponding to the two remaining leaf edges. We then use these last coefficients to make the degree of the divisor zero. This gives us the following corollary. 

\begin{cor}\label{maincor}
Given any superelliptic covering $\Sigma\rightarrow{T}$ with covering data $\delta_{e}$ for every edge, we have  $\Delta_{E}(\psi)=\delta_{e}$ for $E=E(T)$. The divisor $D=\sum_{i}a_{i}(P_{i})$ corresponding to this solution can be chosen to satisfy $\mathrm{deg}(D)=0$. 
\end{cor}

\begin{theorem}
\label{realizabilitythm}
Let $p$ be a prime number. A covering $\phi_\Sigma:\Sigma \rightarrow T$ is a superelliptic covering of degree $p$ of weighted metric graphs with rational edge lengths if and only if there exists a superelliptic covering $\phi:X\rightarrow \mathbb{P}^1$ of degree $p$ tropicalizing to it. 
\end{theorem}
\begin{proof}
Suppose we have a superelliptic admissible covering of graphs $\phi_\Sigma:\Sigma \rightarrow T$ of degree $p$. We present a procedure for constructing a polynomial $f$ such that the covering from the curve $y^p = f(x)$ defined by $(x,y) \mapsto x$ tropicalizes to $\phi_\Sigma:\Sigma \rightarrow T$.
\begin{enumerate}
\item On each vertex $v_i \in T$, use the local Riemann-Hurwitz condition to determine the number of leaf edges $r(v_i)$ needed on each vertex.
\item Each vertex $v_i \in T$ corresponds to a collection of points $P_{i,1}, \ldots, P_{i,r(v_i)} \in \mathbb{P}^1(K)$, each corresponding to the leaf edges attached at $v_i$. We assume that these points are contained in the affine $\mathbb{P}^{1}\backslash\{\infty\}$. The equation for $f$ is then
$
f(x) = \prod_{v_i \in T}\prod_{j=1}^{r(v_i)}(x-P_{i,j})^{a_{ij}}.
$

\item Find the $a_{ij}$ as follows.
Select a target vertex $v_{0}$ with at least two leaf edges. For every edge in the graph, we solve the corresponding total Laplacian equation with respect to $v_{0}$. The fact that there is a solution follows from Corollary \ref{maincor}. 
Pick a solution to these equations. Consider the branch points $P_{v_{0},1},...,P_{v_{0},s}$ reducing to $v_{0}$. The valuations at these points satisfy
\begin{equation*}
\sum_{i=1}^{s}a_{v_{0},i}=\sum_{P \text{ not reducing to }v_{0}}-a_{P},
\end{equation*}
Picking values for the $a_{v_{0},i}$ that satisfy this equation concludes the algorithm for finding the $a_{ij}$. 

\item To obtain the desired points $P_{i}$, we view these trees as describing $\pi$-adic expansions of elements in $K$. To be explicit, let $S$ be a set of representatives for the residue field $k$. Let $v_{0}$ be an endpoint of $T$, and let $v_{1}$ be the vertex connected to $v_{0}$. For every leaf edge $e$ (with end vertex not equal to $v_{0}$) attached to $v_{1}$, construct a point $P_{e}=c_{e}\pi$, with the $c_{e}\in{S}$ distinct. This might require a finite extension of the residue field $k$, which corresponds to a finite (unramified) extension of $K$. 
For every nonleaf edge $e_{i}$, take an element $c_{i}\in{S}$ that is not equal to the $c_{e}$. For such an edge $e_{i}$, consider the connecting vertex $v_{1,i}$. For every leaf edge ${e}$ attached to $v_{1,i}$, find distinct $c_{i,e}\in{S}$ and construct $P_{e}=c_{i}\pi+c_{i,e}\pi^{2}$. For every nonleaf edge $e_{i,j}$ connected to $v_{1,i}$, repeat the procedure and construct elements $c_{i,j}\in{S}$ distinct from the $c_{i,e}$, where $e$ is a leaf edge. We do one more step of the inductive procedure. Let $v_{1,i,j}$ be the other vertex connected to $e_{i,j}$. For every leaf edge $e$ attached to $v_{1,i,j}$, find distinct $c_{i,j,e}$ and construct $P_{e}=c_{i}\pi+c_{i,j}\pi^{2}+c_{i,j,e}\pi^{3}$. At some point, we reach vertices that only have leaf edges as neighboring edges. At this point, we stop the procedure and find a set of points $\{P_{e}\}$. The tree corresponding to this set of points is $T$. On the algebraic side, we can take the canonical semistable $\mathcal{Y}$ corresponding to this set (see Section \ref{sepmodel}) and its intersection graph $\Sigma_{\mathcal{Y}}$ is $T$ minus the leaf edges. 
\end{enumerate}

This concludes the realizability part of the theorem. The backwards direction is obtained by combining \cite[Theorem 4.6.1.]{tropabelian} and \cite[Chapter 10, Proposition 3.48, Page 526]{liu2} for the edge lengths. 
\end{proof}

A natural question following from this is whether the same result holds for non-prime integers $n$. We conjecture that this is indeed the case and that a similar proof could be used.

\section{Moduli Spaces}
\label{modulispaces}

The moduli space $M_g^{\text{tr}}$ of weighted metric graphs of genus $g$ was defined in \cite{BMV}, and{} has the structure of a $(3g-3)$-dimensional stacky fan. The cones in $M_g^{\text{tr}}$ of dimension $d$ correspond to \emph{combinatorial types}, which are pairs consisting of a graph $H$ with $d$ edges and a weight function $w$ on its vertices.
A \emph{constrained type} is a triple $(H,w,r)$, where $r$ is an equivalence relation on the edges of $H$. In a metric graph $\Sigma$ corresponding to the constrained type $(H,w,r)$, the equivalence relation $r$ requires that edges in the same equivalence class have the same length. One can contract edges of a constrained type to arrive at a new constrained type. The operation of contraction is discussed in detail in \cite[Section 4.1]{trophyp} and depicted in Figure \ref{d3g4}. 
\begin{mydef}
The \emph{moduli space of tropical superelliptic curves} $S_{g,n}^{\text{tr}}$ is the set of weighted metric graphs of genus $g$ which have a degree $n$ superelliptic covering to a tree. Let $Sp_{g,n}^{\text{tr}}\subset S_{g,n}^{\text{tr}}$ denote the image under tropicalization of superelliptic curves defined by equations of the form $y^n=f(x)$ with distinct roots. 

The set $S_{g,n,rat}^{\text{tr}}$ is the subset of $S_{g,n}^{\text{tr}}$ consisting of those weighted metric graphs that have rational edge lengths. We similarly define $Sp_{g,n,rat}^{\text{tr}}$.
\end{mydef}

By Theorem \ref{realizabilitythm}, when $n$ is prime we have $S_{g,n,rat}^{\text{tr}}\subset M_g^{\text{tr}}$ is equal to the image under tropicalization of the locus of superelliptic curves (defined over a finite extension of $K$) inside $M_g$, the moduli space of genus $g$ curves. We comment $Sp_{g,n}^{\text{tr}} \subsetneq S_{g,n}^{\text{tr}}$ when $n >2$. See Figure \ref{d3g4} for the combinatorial types of weighted metric graphs corresponding to cones inside $S_{4,3}^{\text{tr}}$ and $Sp_{4,3}^{\text{tr}}$.

\begin{figure}[ht]
\begin{center}
\includegraphics[width=0.9\textwidth, height=3.6 in]{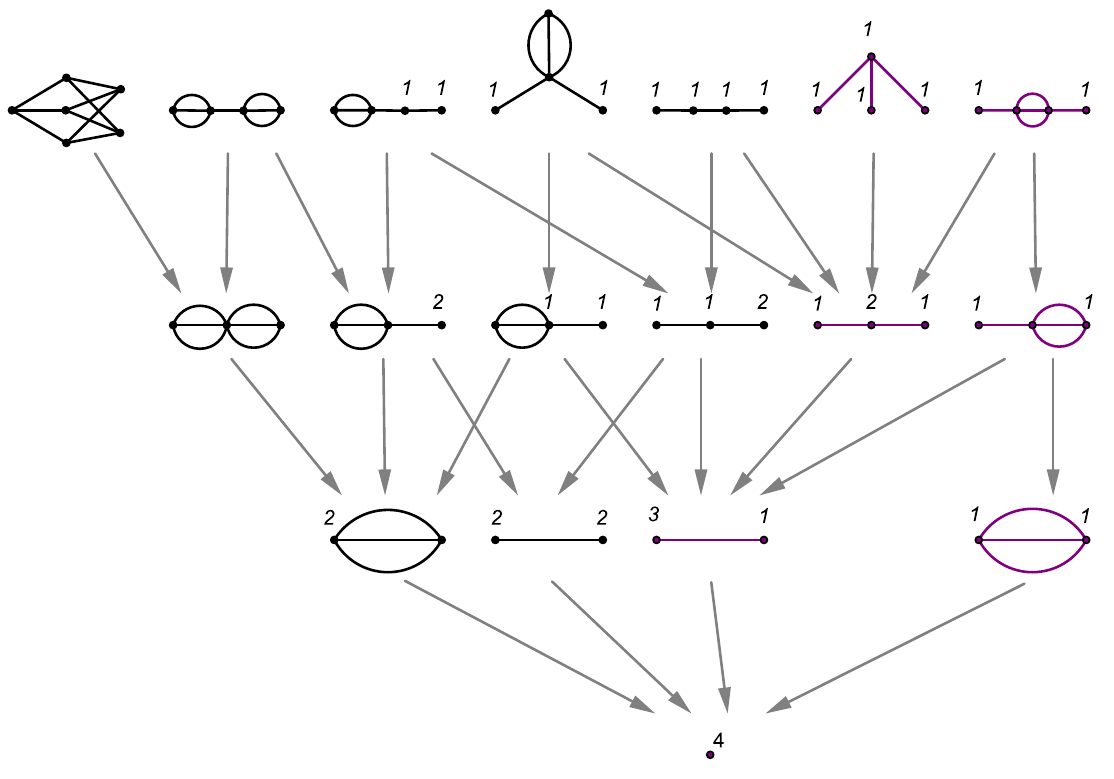}
\caption{The combinatorial types of weighted metric graphs corresponding to maximal cones in $S_{4,3}^{\text{tr}}$. The graphs which are also inside $Sp_{4,3}^{\text{tr}}$ are purple. The arrows between them correspond to contractions of edges.}
\label{d3g4}
\end{center}
\end{figure}

\begin{prop} The locus $S_{g,n}^{\text{tr}}$ of weighted metric graphs of genus $g$ which have a degree $n$ superelliptic covering to a tree has the structure of a 
stacky polyhedral fan.
\end{prop}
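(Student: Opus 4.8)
The plan is to exhibit $S_{g,n}^{\text{tr}}$ as a subfan of the stacky polyhedral fan $M_g^{\text{tr}}$, i.e.\ to show it is a union of (the relative interiors of) cones of $M_g^{\text{tr}}$ together with their faces, and is closed under the face relation and the gluing that defines the stacky structure. Concretely, a point of $M_g^{\text{tr}}$ in the relative interior of a cone lies in a fixed combinatorial type $(H,w)$, and the first task is to argue that membership in $S_{g,n}^{\text{tr}}$ depends only on the combinatorial type $(H,w)$, not on the chosen edge lengths. This is where the earlier material does the real work: a weighted metric graph $\Sigma$ lies in $S_{g,n}^{\text{tr}}$ iff there is a subgroup $G\cong\mathbb{Z}/n\mathbb{Z}$ of $\mathrm{Aut}(\Sigma)$ acting with tree quotient such that $\Sigma\to\Sigma/G$ satisfies the local Riemann--Hurwitz conditions. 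The local degrees $d_v$ and the ratios $l'(\theta(e))/l(e)$ appearing in those conditions are ratios of lengths that are forced to be the integers $n/\gcd(\psi_e,n)$ as in Proposition~\ref{edgeprop}; thus once $(H,w)$ is fixed, the existence of such a $G$ is a purely combinatorial condition on $(H,w)$ (a free $\mathbb{Z}/n\mathbb{Z}$-action, up to the stabilizers recorded on edges and vertices, with the Riemann--Hurwitz equalities holding). The continuous parameters — the edge lengths — can be scaled freely on $G$-orbits of edges without affecting harmonicity, the tree condition on the quotient, or the Riemann--Hurwitz identities.

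From this the fan structure follows in the standard way. First I would fix the set $\mathcal{T}$ of combinatorial types $(H,w)$ of genus $g$ that admit such a $G$-action; this is a finite set since there are finitely many combinatorial types of genus $g$. For each $(H,w)\in\mathcal{T}$, the corresponding cone $\sigma_{(H,w)}=\mathbb{R}_{\geq 0}^{E(H)}/\mathrm{Aut}(H,w)$ sits inside $M_g^{\text{tr}}$, and by the previous paragraph its entire relative interior lies in $S_{g,n}^{\text{tr}}$. Then I would check closure under taking faces: a face of $\sigma_{(H,w)}$ corresponds to contracting a subset of edges of $H$, yielding a new combinatorial type $(H',w')$, and one must see that $(H',w')\in\mathcal{T}$, i.e.\ that contracting $G$-invariantly chosen edges descends to a $\mathbb{Z}/n\mathbb{Z}$-action on the contracted graph still satisfying local Riemann--Hurwitz. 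This is the analogue of the stability-of-admissible-covers-under-contraction arguments in \cite{trophyp,admcov}: contracting an edge and its $G$-orbit corresponds to contracting the image edge downstairs, the quotient remains a tree, and the Riemann--Hurwitz defect is additive under contraction so the local conditions are preserved at the merged vertex (the weight being adjusted exactly to absorb any loops or collapsed genus). Finally, the identifications along faces of $M_g^{\text{tr}}$ restrict to identifications of the $\sigma_{(H,w)}$, so $S_{g,n}^{\text{tr}}=\bigcup_{(H,w)\in\mathcal{T}}\sigma_{(H,w)}$ inherits the structure of a stacky polyhedral fan.

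The main obstacle I expect is the closure-under-contraction step: one has to be careful that when an edge $e$ in the quotient tree $T$ is contracted, the edges upstairs lying over it (whose number is $\gcd(\psi_e,n)$) are contracted compatibly with the $G$-action and that the resulting data still forms a legitimate superelliptic covering — in particular that no two vertices in distinct $G$-orbits are forced to merge in a way that destroys the tree quotient or the cyclic action, and that when a loop upstairs gets created or a positive-genus vertex appears, the assigned weight is exactly what the local Riemann--Hurwitz equation demands. Making this precise amounts to checking that the contraction operation on constrained types of \cite[Section 4.1]{trophyp} is compatible with the $\mathbb{Z}/n\mathbb{Z}$-quotient structure, which is routine but is the place where the bookkeeping must be done correctly. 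The only other point requiring a word is finiteness and well-definedness of the ambient cones modulo automorphisms, which is inherited directly from the construction of $M_g^{\text{tr}}$ in \cite{BMV}.
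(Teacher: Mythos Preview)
Your overall strategy---reduce to combinatorics, then check closure under contraction---is the same as the paper's, and your identification of the contraction step as the crux is right. But there is a genuine gap in how you parametrize the cones.

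You claim that once the combinatorial type $(H,w)$ admits a suitable $\mathbb{Z}/n\mathbb{Z}$-action, the \emph{entire} cone $\sigma_{(H,w)}=\mathbb{R}_{\geq 0}^{E(H)}/\mathrm{Aut}(H,w)$ lies in $S_{g,n}^{\text{tr}}$. This is false in general. By Definition~\ref{galoisdef}, an edge $e$ in the quotient has $l(\theta(e))=l(e)\cdot|\mathrm{Stab}(e)|$, so any two edges of $H$ in the same $G$-orbit are forced to have equal length. (You even say ``scaled freely on $G$-orbits,'' which is the correct constraint, but then in the next paragraph you assert that the full relative interior of $\sigma_{(H,w)}$ lies in $S_{g,n}^{\text{tr}}$.) For instance, in Example~\ref{k33} the three edges of $K_{3,3}$ lying over a single tree edge must all have the same length; a generic metric on $K_{3,3}$ admits no $\mathbb{Z}/3\mathbb{Z}$-quotient to a tree. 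Thus $S_{g,n}^{\text{tr}}$ is \emph{not} a union of cones of $M_g^{\text{tr}}$, and it is not a subfan of $M_g^{\text{tr}}$ in the sense you describe.

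The paper handles this by working not with combinatorial types $(H,w)$ but with \emph{constrained types} $(H,w,r)$, where $r$ records exactly the equal-length condition on edges sharing an image in $T$. The cone attached to $(H,w,r)$ is then a linear slice of $\sigma_{(H,w)}$, one coordinate per $r$-class, and one invokes Chan's result from \cite[Section~4.1]{trophyp} that any collection of constrained types closed under contraction assembles into a stacky fan. Your closing paragraph mentions constrained types, so you have the right object in hand; the fix is to use them from the outset rather than the full cones of $M_g^{\text{tr}}$.
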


\begin{proof}
In \cite[Section 4.1]{trophyp}, Chan proves that given a collection $S$ of constrained types which are closed under contraction, the space $M_S$ they define is a stacky fan with cells in correspondence with the constrained types. 
We can obtain a constrained type from a combinatorial type of genus $g$ graph with a degree $n$ superelliptic covering to a tree by making the relation $r$ to equate any edges which have the same image under the covering map. Let $(H, w,r)$ be such a type, admitting a degree $n$ superelliptic covering $\theta$ to the tree $T$ which gives the relation $r$. 
 If $(H',w',r')$ is a contraction of $(H, w,r)$ along the equivalence class of edges $[e]$, and $T'$ is the contraction of $T$ along the edge $\theta([e])$, one can see using the local Riemann-Hurwitz equations $(H',w',r')$ admits a degree $n$ superelliptic covering to the tree $T'$. 
\end{proof}

Using the Riemann-Hurwitz equation, we can compute the genus of a graph in the case when $p$ is prime and the map has $r$ ramification points. To that end, let
$
g(p,r) := (p-1)(r/2-1).
$

\begin{theorem}
\label{hi}
Let $r \geq 4$ be an integer number of ramification points. Given two odd primes $p$ and $p'$, the stacky polyhedral fan $S_{g(p,r),p}^{\text{trop}}$ is the same as $S_{g(p',r),p'}^{\text{trop}}$.
\end{theorem}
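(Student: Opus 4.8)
The plan is to show that the combinatorial data classifying the cones of $S_{g(p,r),p}^{\mathrm{trop}}$ — constrained types $(H,w,r)$ arising from a degree $p$ superelliptic covering $\theta\colon H\to T$ — is insensitive to which odd prime $p$ is chosen, provided the number of ramification points $r\geq 4$ is fixed. The key observation, which I would isolate first, is that for $p$ prime the entire construction of the tropicalization algorithm (Algorithm \ref{tropalgor}) depends on $p$ only through the arithmetic of $\gcd(\cdot,p)$ and $\mathrm{lcm}(\cdot)$ of the divisors $n/|I_Q|$ — and since $p$ is prime, for any edge $e$ the slope $\psi_e$ reduced mod $p$ is either $0$ or a unit, so $\gcd(\psi_e,p)\in\{1,p\}$. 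Thus an edge either has $1$ preimage or $p$ preimages, and a vertex either has $1$ preimage or $p$ preimages, and \emph{which} case occurs is determined purely by whether $\psi_e\equiv 0\pmod p$, i.e. by the combinatorics of where the branch points specialize in $T$ and with what residues mod $p$ — data that can be chosen identically for $p$ and $p'$ by Corollary \ref{maincor} (the total Laplacian equations are solvable over any $\mathbb{F}_p$ with $p$ prime).

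Concretely I would proceed in the following steps. First, set up a bijection between (a) the set of ``branch patterns'' on a tree $T$ with $r$ leaves — namely an assignment $\delta_e\in\{0,1\}$ to each edge of $T$ satisfying the parity/consistency constraint that $\delta_e$ is forced by the $\delta$'s and leaf-data below $e$ in the sense of Lemma \ref{keylemma} — and (b) the set of constrained types $(H,w,r)$ in $S_{g(p,r),p}^{\mathrm{trop}}$, and observe this bijection is given by a formula (number of preimages of each edge is $1$ or $p$; vertex weights from the local Riemann–Hurwitz formula $2w(v)-2 = -2d_v + \sum_{e\ni v}(p/\gcd(p,\psi_e)-1)$) in which $p$ enters only as ``the prime.'' Second, check that the genus of the resulting graph is $(p-1)(r/2-1) = g(p,r)$: this is the global Riemann–Hurwitz computation, summing the local contributions over all vertices, and it shows the genus scales linearly with $p-1$ while the \emph{shape} of the graph does not change — replacing a ramified edge by $p$ parallel copies, a ramified vertex by $p$ copies, etc., and the underlying ``type data'' (which edges are ramified, which vertices, the covering poset) is exactly the branch pattern. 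Third, verify compatibility with contraction: contracting an edge-class $[e]$ in $H$ corresponds to contracting $\theta([e])$ in $T$, which transforms one branch pattern into another, and this commutes with the $p\mapsto p'$ relabeling, so the identification $S_{g(p,r),p}^{\mathrm{trop}}\cong S_{g(p',r),p'}^{\mathrm{trop}}$ respects the face relations and hence is an isomorphism of stacky polyhedral fans.

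I expect the main obstacle to be the combinatorial bookkeeping in step one: one must argue that \emph{every} constrained type in $S_{g(p,r),p}^{\mathrm{trop}}$ genuinely arises from some branch pattern (surjectivity), and that distinct branch patterns give non-isomorphic types (injectivity), without implicitly using properties special to a particular $p$. Surjectivity should follow from the realizability Theorem \ref{realizabilitythm} combined with Corollary \ref{maincor} — any branch pattern is attained by a choice of $a_i\in\mathbb{F}_p$, and conversely any superelliptic covering of graphs yields its branch pattern — but care is needed because a superelliptic covering of metric graphs is part of the data of a point of $S_{g,p}^{\mathrm{trop}}$ only implicitly (the definition asks only that such a covering \emph{exists}), so one must check that the \emph{set} of underlying weighted graphs, not the set of coverings, is in bijection across primes; in principle two different branch patterns on possibly different trees could yield the same weighted graph, and one must confirm this ambiguity is also $p$-independent. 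A secondary, more routine point is verifying that the linear-algebraic solvability statement of Lemma \ref{keylemma} holds uniformly: its proof only uses that one can pick valuations in $\mathbb{F}_p$ freely, which is true for every prime $p$, so no new difficulty arises there. Once these bijections are in place, the isomorphism of fans is immediate from Chan's criterion \cite[Section 4.1]{trophyp} applied on both sides.
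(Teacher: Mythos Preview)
Your approach is essentially the paper's: parametrize cones by ``branch patterns'' (the paper calls them \emph{signatures}) on trees with $r$ leaves, then observe that the admissibility condition --- namely that the local Riemann--Hurwitz formula yields a nonnegative integer weight at each vertex --- reduces to the $p$-independent requirement $r_v\geq 2$ once $p$ is an odd prime. The paper packages this slightly more cleanly by introducing an auxiliary fan $Ts(r)$ of all pairs $(T,s)$ and arguing that both $S_{g(p,r),p}^{\mathrm{tr}}$ and $S_{g(p',r),p'}^{\mathrm{tr}}$ sit inside $Ts(r)$ with the same image; this spares the explicit contraction-compatibility check you outline in step three.

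One point worth noting: your invocation of Corollary~\ref{maincor} and Theorem~\ref{realizabilitythm} is unnecessary. The statement is purely about the tropical moduli space $S_{g,n}^{\mathrm{tr}}$, defined combinatorially as weighted metric graphs admitting a degree-$n$ superelliptic cover of a tree; no algebraic curve ever needs to enter. The paper's argument stays entirely on the combinatorial side, using only the local Riemann--Hurwitz formula to decide which signatures are admissible. Your detour through realizability is not wrong, but it obscures the fact that the result is elementary combinatorics. On the other hand, you are right to flag the injectivity concern (two coverings of possibly different trees yielding the same weighted graph); the paper's phrasing ``naturally sits inside $Ts(r)$'' glosses over exactly this, and a careful write-up should address it.
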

\begin{proof}
Define a new stacky polyhedral fan $Ts(r)$ for $r \geq 4$ whose cones correspond to pairs $(T,s)$, where $T$ is a tree on $r$ leaves and $s$ is any subset of the edges of $T$ which we call a \emph{signature}. Each cone has dimension equal to the number of interior edges of $T$. We glue the cone $(T,s)$ to the cone $(T',s')$ when $(T',s')$ is a pair in which an edge $e \in T$ has been contracted, and $s'= s \backslash \{e\}$.

Given a tree $T$ with $r$ leaves and $m$ interior edges, one can compute all superelliptic graphs with a degree $p$ map to $T$ in the following way. A choice of signature on $T$ corresponds to deciding which interior edges have $p$ preimages or $1$ preimage in a superelliptic graph mapping to $T$. This yields $2^{m}$ signatures, but some signatures do not yield admissible covers. On each interior vertex $v$, compute the weight using the local Riemann-Hurwitz equation. If the vertex has no leaves and all edges adjacent to it have multiple preimages, then the vertex has $p$ preimages and weight 0. Otherwise
$
w(v) = (p-1)(r_v-2)/2,
$
where $r_v$ is the number of leaves at $v$ plus the number of ramified edges.
The graph is superelliptic if and only if this number is a positive integer for all vertices of the tree. Since $p$ is odd, this is always an integer. Then, we need that at each vertex, $r_v \geq 2$. Any signature on a tree satisfying this yields a superelliptic graph. In other words, the graphs admitting a degree $p$ superelliptic cover of $T$ are in bijection with the good choices of signatures on $T$. The space $S_{g(p,r),p}^{\text{tr}}$ naturally sits inside $Ts(r)$; each cone corresponding to a superelliptic curve $\Sigma \rightarrow T$ is mapped to the cone $(T,s)$ where $T$ is the tree corresponding to that curve and $s$ is the set of ramified edges in the covering. Whether or not a signature is admissible does not depend on $p$, so for any odd primes $p$ and $p'$, the images $S_{g(p,r),p}^{\text{tr}} \subset Ts(r)$ and $S_{g(p',r),p'}^{\text{tr}} \subset Ts(r)$ are the same.
\end{proof}

\begin{theorem} For primes $p \leq 17$ and number of ramification points $r\leq 14$, the number of maximal cones in $S_{g(p,r),p}^{\text{tr}}$ and $Sp_{g(p,r),p}^{\text{tr}}$ is given in Table~\ref{computation}.
\end{theorem}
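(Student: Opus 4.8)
The plan is to treat this as an explicit enumeration problem that combines the structural result of Theorem~\ref{hi} with a direct computer search, so the "proof" is really a description of the algorithm whose output populates Table~\ref{computation}. First I would invoke Theorem~\ref{hi}: for fixed $r \geq 4$ and any odd prime $p$, the maximal cones of $S_{g(p,r),p}^{\text{tr}}$ are in bijection with the pairs $(T,s)$ where $T$ is a trivalent tree on $r$ labelled leaves and $s$ is an admissible signature (a subset of the $m = r-3$ interior edges such that at every interior vertex $v$ one has $r_v \geq 2$, where $r_v$ counts leaves at $v$ plus ramified interior edges meeting $v$). Maximality of the cone corresponds to $T$ being trivalent, i.e. having the maximal number $r-3$ of interior edges. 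So the count for $S^{\text{tr}}$ reduces to: enumerate trivalent trees on $r$ leaves up to isomorphism, and for each one count the admissible signatures, then sum. Since Theorem~\ref{hi} already tells us admissibility is independent of $p$ (as long as $p$ is an odd prime), the table entries for $S_{g(p,r),p}^{\text{tr}}$ depend only on $r$; I would note this explicitly, as it collapses many columns of the table.

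Next I would handle $Sp_{g(p,r),p}^{\text{tr}}$, the sublocus coming from curves $y^p = f(x)$ with \emph{distinct} roots, i.e. all ramification exponents $a_i$ equal to $1$ (or at least coprime to $p$ in a way forcing each leaf to be genuinely ramified). Here the constraint is stronger: one cannot freely choose which interior edges are ramified, because the slopes $\psi_e$ are now determined by the combinatorics of how the $r$ distinct roots distribute over the leaves of $T$ via Lemma~\ref{slopelemma} — each $\psi_e$ is the number (mod $p$) of roots specializing into $T_e$, and an interior edge is ramified precisely when $p \nmid \psi_e$. So for $Sp^{\text{tr}}$ I would, for each trivalent tree $T$ on $r$ leaves, run over all ways of placing one simple root at each of the $r$ leaves (the distinct-roots condition pins the leaf multiplicities to $1$), compute the induced signature on the interior edges by the subtree-sum-mod-$p$ rule, and collect the set of signatures that actually arise; the number of maximal cones of $Sp_{g(p,r),p}^{\text{tr}}$ is then the number of distinct $(T, s)$ realized this way. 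Unlike the $S^{\text{tr}}$ count, this one genuinely depends on $p$ (since the condition $p \mid \psi_e$ does), which is why both rows appear in the table and why $Sp^{\text{tr}} \subsetneq S^{\text{tr}}$ for $p > 2$.

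The remaining work is bookkeeping that the machine does: generate all trivalent trees on $r \leq 14$ labelled leaves (Schröder/odd-double-factorial many, quotiented by the symmetric group action on leaf labels — one must be careful to count isomorphism classes of the \emph{unlabelled-leaf} objects, since cones of $M_g^{\text{tr}}$ are combinatorial types without leaf labels, so I would count $(T,s)$ up to the automorphisms of $T$ that permute leaves and edges), decorate with admissible/realizable signatures as above, and tally. Correctness of the enumeration rests on three already-established facts: Theorem~\ref{hi} (the bijection with signatures and its $p$-independence for $S^{\text{tr}}$), the Tropicalization Algorithm~\ref{tropalgor} together with Proposition~\ref{edgeprop} (which says the edge/vertex preimage counts, hence the cone, are determined by $\gcd(\psi_e, p)$, so that realizable cones for $Sp^{\text{tr}}$ are exactly those arising from distinct-root divisors), and the Realizability Theorem~\ref{realizabilitythm} (which guarantees every admissible signature on $T$ for prime $p$ is actually attained by some algebraic superelliptic cover, so $S^{\text{tr}}$ is not overcounted).

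I expect the main obstacle to be neither step in isolation but getting the isomorphism-class counting exactly right, especially distinguishing the unlabelled combinatorial types: two different labelled pairs $(T,s)$ and $(T,s')$ can give the same cone of $M_g^{\text{tr}}$ if an automorphism of $T$ carries $s$ to $s'$, and conversely the same underlying graph $\Sigma$ can occur over non-isomorphic trees, so one must verify the map from admissible signatures to maximal cones of $S_{g,p}^{\text{tr}}$ is the bijection Theorem~\ref{hi} claims before trusting the raw signature counts. Beyond that, the only real risk is an off-by-one in the vertex-weight formula $w(v) = (p-1)(r_v - 2)/2$ at vertices with leaves versus the special "all edges ramified, no leaves" vertices of weight $0$; I would double-check these against the worked genus-$4$, $p = 3$ example of Figure~\ref{d3g4}, for which the counts can be read off by hand, before reporting the full table.
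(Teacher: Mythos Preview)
Your treatment of $S_{g(p,r),p}^{\text{tr}}$ is essentially the paper's approach: enumerate trivalent trees on $r$ leaves, decorate interior edges with ramified/unramified markings, keep those satisfying the local Riemann--Hurwitz conditions, and remove isomorphic duplicates. You phrase the admissibility check via the criterion $r_v \geq 2$ extracted from Theorem~\ref{hi}, whereas the paper checks the Riemann--Hurwitz weight directly, but these are equivalent. Your observation that the count is independent of the odd prime $p$ is correct and matches the single ``$p>2$'' column of Table~\ref{computation}.

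The $Sp_{g(p,r),p}^{\text{tr}}$ part has a genuine gap. You write that one should ``run over all ways of placing one simple root at each of the $r$ leaves,'' but with distinct roots every leaf multiplicity is already pinned to $1$, so this loop is vacuous: the slope $\psi_e$ would then simply be the number of leaves in $T_e$, a quantity depending only on $T$ and yielding a single signature per tree. That cannot be right---for instance it would never produce the zeros in Table~\ref{computation} (e.g.\ $r=6$, $p=5$). What you are missing is the role of $\infty$. The divisor of $f$ is $\sum_i (\alpha_i) - d\cdot(\infty)$, so $\rho(\text{div}(f))$ carries a contribution of $-d$ at the vertex to which $\infty$ specializes. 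The slope $\psi_e$ therefore equals the number of root-leaves on the side of $e$ \emph{not} containing $\infty$, and whether $p \mid \psi_e$ genuinely depends on which leaf (or interior vertex, when $p \mid d$) is $\infty$. The paper makes exactly this point: for a fixed tree $T$, the realized signature ``depends only on the choice of where $\infty$ specializes,'' and the enumeration loops over those placements, then removes isomorphic duplicates among the resulting graphs. This also explains the zeros: for $p=5$ one can never have exactly six branch points from a squarefree $f$, since $d=5$ gives $r=5$ ($\infty$ unramified) and $d=6$ gives $r=7$ ($\infty$ ramified). Once you replace your root-placement loop with a loop over the specialization of $\infty$, your algorithm coincides with the paper's.
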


\begin{proof}
This was done by direct computation in \texttt{Mathematica}, as we describe below. We restrict to the case of prime $n=p$ to simplify the computation. First, we precompute all trivalent trees on $r$ leaves. 

For the computations for $S_{g(p,r),p}^{\text{tr}}$, we create all graphs arising from assigning on each interior edge of the tree which ones are ramified and which are unramified (this gives $2^{r-3}$ possibilities). We check which of the resulting graphs have an assignment of non-negative integer weights on the vertices satisfying the local Riemann-Hurwitz condition. Then, we remove the isomorphic duplicates. For the case of fourteen leaves, this took 2.6 days to compute.

For the computations on $Sp_{g(p,r),p}^{\text{tr}}$, the possible metric graphs arising from a fixed tree $T$ depend only on the choice of where $\infty$ specializes. This is because the metric graph is determined by the divisor of $f$, and in the case with distinct roots, this is completely determined by where $\infty$ specializes on $T$. So, we make all possible choices and compute the resulting metric graphs. Then, we remove the isomorphic duplicates. The computations with twenty leaves took 16 hours each to compute.
\end{proof}

\begin{table}[ht]
\begin{center}
\begin{tabular}{c | c | c | c c c c c c c c }
 	& & $S_{g,p}^{\text{tr}}$ & & & & $Sp_{g,p}^{\text{tr}}$ & & & \\
\hline
r	&p=2 	& p>2			& 3 		& 5	 	& 7 		& 11 		&13		&17\\
	\hline
4 	&1		&2			&1		&1		&1		&1		&1		&1\\
5 	&0		&2				&2		&1		&1		&1		&1		&1\\
6 	&2		&7				&2		&0		&2		&2		&2		&2\\
7 	& 0		&11				&0		&5		&2		&2		&2		&2\\
8 	& 4		&34				&11		&7		&0		&4		&4		&4\\
9 	& 0		&80				&6		&12		&17		&6		&6		&6\\
10 	& 11	&242			&0		&11		&22		&11		&11		&11\\
11 	& 0		&682			&92		&0		&40		&18		&18		&18\\
12 	& 37	&2146			&37		&160	&70		&0		&37		&37\\
13 	& 0		&6624			&0		&227	&132	&273	&66		&66\\
14	&135	&21447			&916	&457	&135	&342	&0		&135\\
15	&0		&-				&265	&265	&0		&679	&1248	&265\\
16	&552	&-				&0		&0		&3167	&1173	&1535	&552\\
17	&0		&-				&10069	&8011	&4323	&2374	&3098	&1132\\
18 	& 2410	&-				&2410	&12029	&8913	&4687	&5359 	&0\\
19	&0		&-				&0		&24979	&16398	&9859	&10996	&29729\\
20	&11020	&-				&117746	&11020	&34511	&20542	&21833	&35651\\
\end{tabular}
\caption{The number of maximal cones in $Sp_{g(p,r),p}^{\text{tr}}$ and $S_{g(p,r),p}^{\text{tr}}$. The column labeled $p=2$ displays the number of maximal cones in $Sp_{g(2,r),2}^\text{tr}$ and $S_{g(2,r),2}^\text{tr}$.
}
\label{computation}
\end{center}
\end{table}

\subsubsection*{Acknowledgements.} The authors would like to thank the Max Planck Institute for Mathematics in the Sciences for their hospitality while they carried out this project. Madeline Brandt was supported by a National Science Foundation Graduate Research Fellowship. 


{\bibliographystyle{plain}
\bibliography{sample}}

\end{document}